\documentclass[12pt]{amsart}
\usepackage{amsfonts}
\usepackage{amsmath,amssymb}
\usepackage{mathrsfs}
\usepackage{hyperref}
\usepackage{graphicx}
\usepackage{float}
\usepackage{amsfonts}
\usepackage{bbm}
\usepackage{amsmath,amscd,amsbsy,amssymb,latexsym,url,bm,amsthm}
\usepackage{mathrsfs, euscript}
\usepackage{lineno}
\usepackage{color}

\newtheorem{thm}{Theorem}[section]
\newtheorem{cor}[thm]{Corollary}
\newtheorem{lem}[thm]{Lemma}
\newtheorem{prop}[thm]{Proposition}
\newtheorem{defn}[thm]{Definition}

\newtheorem{rem}[thm]{Remark}

\makeatletter \renewenvironment{proof}[1][Proof]
{\par\pushQED{\qed}\normalfont\topsep6\p@\@plus6\p@\relax\trivlist\item[\hskip\labelsep\bfseries#1\@addpunct{.}]\ignorespaces}{\popQED\endtrivlist\@endpefalse} \makeatother
\makeatletter
\numberwithin{equation}{section}\allowdisplaybreaks

\def\leq{\leqslant}
\def\geq{\geqslant}
\def\les{\lesssim}
\def\no{\nonumber}

\def\R{{\mathbb{R}}}
\def\C{{\mathbb{C}}}

\def\FF{{\mathscr{F}^{-1}}}
\def\F{{\mathscr{F}}}
\def\Z{{\mathbb{Z}}}
\renewcommand{\S}{\mathcal{S}}

\newcommand{\pa}{\partial}

\newcommand{\norm}[1]{\left\|#1\right\|}

\title[Well-Posedness for KdV-Type Equations]
{Well-Posedness for KdV-Type Equations with Second-Order Derivative Nonlinearities}

\author[Well-Posedness for KdV-Type Equations]{\bf Mingjuan Chen$^{a}$,  Ting Chen$^{a}$, and Zhong Wang$^{b}$}

 \address{$^a$ School of Mathematics, Jinan University, Guangzhou 510632, P.R. China}
 \email{mjchen@jnu.edu.cn}

\address {$^b$ School of Mathematics, Foshan University, Foshan 528000, P. R. China}
\email{wangzh79@fosu.edu.cn}

\keywords{KdV-type equations; well-posedness; derivative
nonlinearities; Kaup--Newell flow; logarithmic divergence.}

\date{}

\begin{document}
	
	

	\maketitle

	\begin{abstract} 
  We study a class of complex-valued KdV-type equations with cubic
second-order derivative nonlinearities and arbitrary complex
coefficients. For sufficiently small initial data in $L^2$, we
prove local well-posedness in $H^s(\mathbb R)$ for $s\ge3/4$.
The key ingredient is a family of dyadic resolution spaces
$Z_k=X_k+Y_k$, designed to overcome the logarithmic divergence
arising from high$*$low$*$low interactions in which both
derivatives fall on the highest-frequency factor. 
For the completely integrable third-order Kaup--Newell flow, we
establish global-in-time $H^s(\mathbb R)$ bounds for $0\le s<1$
and global well-posedness in $H^s(\mathbb R)$ for every
$s\ge3/4$, with no smallness assumption on the initial data.

		
	\end{abstract}

	\section{Introduction}
The first part of this paper develops a low regularity well-posedness theory for complex-valued KdV-type equations whose nonlinearities contain two
spatial derivatives. More precisely, we consider the following initial value problem
	\begin{equation}\label{eq0}
		\begin{cases}
			 u_t -u_{xxx} = \mathcal N_3(u,\bar u) +\mathcal N_5(u,\bar u), \qquad (x,t)\in\R\times\R,\\
			u(x,0) = u_0(x),
		\end{cases}
	\end{equation}
where $\mathcal N_3$ is a cubic nonlinearity involving a total of two
spatial derivatives. To preserve the scaling symmetry of the equation,
and motivated by the nonlinear structures arising in the underlying
physical models, we take $\mathcal N_5$ to be a quintic nonlinearity
involving one spatial derivative. In both $\mathcal N_3$ and
$\mathcal N_5$, the coefficients may be arbitrary complex numbers, and
complex conjugates may be placed on any of the factors.

The linear group associated with Equation \eqref{eq0} is the Airy group $W(t)=e^{t\partial_x^3}$, and hence the corresponding dispersion relation is $\omega(\xi)=-\xi^3$. There is an extensive literature on the low regularity well-posedness theory for equations with this dispersive structure, most notably the KdV equation, with quadratic derivative nonlinearity $(u^2)_x$, and the modified KdV equation, with cubic derivative nonlinearity $(u^3)_x$. The Fourier restriction norm method of Bourgain \cite{Bourgain1993} and the multilinear estimates of Kenig, Ponce, and Vega \cite{KenigPonceVega1993,KenigPonceVega1996} established a robust framework for first-order derivative nonlinearities, see also \cite{CCT,A4}. More recently, exploiting the integrable structure through the method of commuting flows, Killip and Vi\c{s}an \cite{KV} established global well-posedness for the KdV equation in $H^{-1}(\mathbb R)$, pushing the regularity threshold well below the range $s\geq -3/4$ obtained by contraction mapping arguments. Subsequently, Harrop-Griffiths, Killip, and Vi\c{s}an \cite{HarropGriffithsKillipVisan2024} extended this approach to the modified KdV equation, establishing well-posedness in $H^s(\mathbb R)$ for any $s>-1/2$, far below the previously known threshold $s\geq 1/4$.

KdV-type equations with second-order derivative nonlinearities are substantially more delicate. General high-order results were obtained by Kenig, Ponce, and Vega \cite{KenigPonceVega1994} in the weighted Sobolev spaces $H^s(\R) \cap L^2(|x|^{k} dx)$ for sufficiently large $s>0$ and $k\in \mathbb{Z}^+$. Subsequent developments used weighted Sobolev spaces, local energy spaces, or refined frequency decompositions \cite{Pilod2008,HarropGriffiths2015,HirayamaKinoshitaOkamoto2020}.
For the model $u_t+u_{xxx}=\partial_x^2(u^{m+1})$ ($m\geq4$), global small data results in
critical modulation spaces were recently proved by Lu \cite{Lu2025}. In \cite{ChenChenXiao2026}, the first two authors of the present paper and Xiao considered cubic KdV-type equations in which the two spatial derivatives are distributed among different factors, a structure that can still be treated within
the classical Bourgain space framework. That work established local
well-posedness in $H^s(\R)$ for $s\geq 3/4$ and showed that this regularity threshold
is sharp, in the sense that the data-to-solution map fails to be
$C^3$ when $s<3/4$.  

Gauge transformations have played an important role in the study of the classical derivative nonlinear Schr\"odinger (dNLS) equation \cite{A13}, where they are used to eliminate or reorganize the most unfavorable derivative nonlinearities. More recently, this approach has been extended to
higher-order equations in the dNLS hierarchy to establish well-posedness in Fourier--Lebesgue and modulation spaces \cite{Adams2025}. In a related but structurally distinct nonlocal integrable model, G\'erard and Lenzmann \cite{GerardLenzmann2024} used a gauge transformation for the Calogero--Moser dNLS equation to eliminate the local derivative
component of the nonlinearity, leaving a nonlocal derivative interaction and revealing a standard Hamiltonian structure with a nonnegative energy. However, a fundamental limitation of this
approach is that the cancellation of the highest-order derivative term relies on a special algebraic structure of the nonlinearity. In particular, for a real-valued phase gauge, the
coefficient of the term to be eliminated must be purely imaginary. Consequently, such a gauge mechanism is not available for the general complex coefficients considered in this part. In the second part, however, the coefficients of the third-order
Kaup--Newell flow possess precisely the algebraic structure
required for the gauge transformation, and we shall exploit this structure in the corresponding analysis.

For Equation \eqref{eq0}, the principal difficulty arises from the term $u_{xx}|u|^2$, where both derivatives are concentrated on the highest frequency factor. This derivative concentration leads to a
delicate high$*$low$*$low interaction, for which a direct iteration in the standard Bourgain space $X^{s,b}$ incurs a logarithmic divergence. To compensate for this loss, we introduce high frequency resolution spaces of the form $Z_k=X_k+Y_k$, where $X_k$ is a weighted, dyadically localized Fourier restriction space, while $Y_k$ is designed to capture the local smoothing effect through an $L_x^1L_t^2$ norm. To the best of our knowledge, this is the first time that such a local smoothing component has been incorporated into dyadically localized resolution spaces for an Airy-type  equation. Closely related mechanisms arise in the low regularity theory for Benjamin--Ono (BO), modified BO equations and BO-Burgers equations (see \cite{IonescuKenig07,Guo2011,CGH22}). Related high$*$low frequency difficulties for fourth-order derivative Schr\"odinger equations were studied in \cite{HuoJia2011}. We also note that dyadically localized $X_k$ spaces have previously been used in the Airy setting. In particular, in his proof of global well-posedness for the KdV equation in $H^{-3/4}(\mathbb R)$, Guo \cite{Guo2009} employed unweighted dyadically localized $X_k$ spaces to handle high$*$high frequency
interactions.

A further ingredient in our argument is Tao's $[k;Z]$ multiplier formalism \cite{Tao2001}, which we adapt to the Airy resonance structure arising in the present problem (see Lemma~\ref{4Zmultiplier}). In particular, this formulation allows us to organize the dyadic multilinear estimates according to the Airy resonance geometry. 

Our first main result is the following.

\begin{thm}\label{thm-lwp}
Let $s\geq 3/4$. Assume that $u_0\in H^s(\R)$ and $\norm{u_0}_{L^2}\leq\delta$ for some sufficiently small $\delta>0$. Then there exists
$T=T(\norm{u_0}_{H^s})>0$ and a unique local solution to \eqref{eq0} satisfying
$$
 u\in W^s(T)\hookrightarrow C([-T,T];H^s(\R)).
$$
Moreover, for any $R>0$, the data-to-solution map is locally Lipschitz continuous from 
$\{u_0\in H^s(\mathbb{R}):\|u_0\|_{H^s}\leq R,\
\|u_0\|_{L^2}\leq\delta\}
$
into $C([-T,T];H^s(\mathbb{R}))$.
\end{thm}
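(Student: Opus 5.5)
We prove Theorem~\ref{thm-lwp} by a contraction argument in the resolution space $W^s(T)$, built from the dyadic spaces $Z_k=X_k+Y_k$. We work with the Duhamel formulation
\[
u(t)=W(t)u_0+\int_0^t W(t-t')\big(\mathcal N_3+\mathcal N_5\big)(t')\,dt'
\]
and localize in time with a cutoff $\psi(t/T)$. Define
\[
\|u\|_{W^s}^2=\sum_k 2^{2sk}\|P_k u\|_{Z_k}^2 ,
\]
where $X_k$ is a weighted $X^{0,1/2,1}$-type space localized to frequencies $|\xi|\sim 2^k$, and
\[
\|f\|_{Y_k}\approx 2^{-k}\|\F^{-1}[(\tau+\xi^3+i)f]\|_{L^1_xL^2_t}
\]
plus an $X^{0,1/2,\infty}$ part. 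The companion nonlinear space $N^s$ is defined by the norms $\|(\tau+\xi^3+i)^{-1}\cdot\|_{Z_k}$.

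The plan has three steps.

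\textbf{Step 1: linear estimates.} We show:
\begin{itemize}
\item $\|\psi(t)W(t)u_0\|_{W^s}\lesssim\|u_0\|_{H^s}$;
\item $\big\|\psi(t)\int_0^tW(t-t')F\,dt'\big\|_{W^s}\lesssim\|F\|_{N^s}$;
\item the embedding $W^s\hookrightarrow C_tH^s$.
\end{itemize}
For the $Y_k$ component we also need the embeddings of $Z_k$ into $L^\infty_xL^2_t$ (with gain $2^{k}$, from local smoothing), into $L^2_xL^\infty_t$ (maximal function, loss $2^{k/4}$), and into $L^4_xL^\infty_t$ and $L^6_{x,t}$ Strichartz-type spaces.

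For $X_k$ these follow from the standard estimates for $W(t)$ (Kenig--Ponce--Vega) by superposing modulated free solutions. For $Y_k$ we use the identity $f=(\tau+\xi^3+i)^{-1}\cdot g$ with $g\in L^1_xL^2_t$, and handle it via the inhomogeneous smoothing estimate $\|\partial_x^2\int W(t-t')F\|_{L^\infty_xL^2_t}\lesssim\|F\|_{L^1_xL^2_t}$, which is dual to local smoothing.

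\textbf{Step 2: multilinear estimates.} We prove
\[
\|\mathcal N_3(u,\bar u)\|_{N^s}\lesssim \|u\|_{W^{0}}^2\|u\|_{W^s}+\|u\|_{W^{0}}\|u\|_{W^s}^2 T^{\theta},
\]
and a similar quintic bound for $\mathcal N_5$. The important feature is that in the worst interaction the low-regularity norm appears quadratically.

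We decompose dyadically into $k_1\ge k_2\ge k_3$ and the output frequency $k$, and use Lemma~\ref{4Zmultiplier} to bound the $[4;Z]$ multiplier by the resonance function
\[
\xi_1^3+\xi_2^3+\xi_3^3-\xi^3=3(\xi_1+\xi_2)(\xi_2+\xi_3)(\xi_3+\xi_1).
\]
The cases are:
\begin{itemize}
\item \emph{High$*$high and derivatives split among factors.} These are as in \cite{ChenChenXiao2026}: the resonance size $\gtrsim 2^{2k_1}\cdot 2^{\min}$ compensates two derivatives once $s\ge3/4$.
\item \emph{High$*$low$*$low with $\partial_x^2$ on the high factor.} Here the modulation is $\sim 2^{2k_1}2^{k_2}$, which just fails to absorb $2^{2k_1}$ when $k_2\ll 0$, giving a logarithmic sum. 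We put the output in $Y_k$: estimate $\partial_x^2 P_{k_1}u\cdot P_{k_2}u\,P_{k_3}\bar u$ in $L^1_xL^2_t$ by
\[
\|\partial_x^2P_{k_1}u\|_{L^\infty_xL^2_t}\,\|P_{k_2}u\|_{L^2_xL^\infty_t}\,\|P_{k_3}u\|_{L^2_xL^\infty_t}.
\]
The first factor costs only $2^{k_1}$ (local smoothing), which the $2^{-k}$ in the $Y_k$ norm cancels. The low factors are controlled by the maximal function estimate at low frequency, costing $2^{k_i/4}$, which is summable for $k_i\le0$ and absorbed by the $L^2$-level norms otherwise.
\end{itemize}
Nonresonant subcases, where the modulation is large relative to $2^{3k_1}$, go into $X_k$ as usual. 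Time localization yields $T^\theta$ except in this borderline term, which is why smallness of $\|u_0\|_{L^2}$, rather than smallness of $T$, is needed.

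\textbf{Step 3: closing the contraction.} With the bound $\|u\|_{W^0}\lesssim\delta$ in the critical term, the map is a contraction on a ball in $W^s(T)$ where the $W^0$ norm is $\le C\delta$ and the $W^s$ norm is $\le C\|u_0\|_{H^s}$, with $T$ chosen small depending on $\|u_0\|_{H^s}$. Uniqueness in $W^s(T)$ and local Lipschitz dependence follow from the same multilinear estimates applied to differences.

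The main obstacle is Step 2 for the high$*$low$*$low term with both derivatives on the high factor, together with making the $Y_k$ norm compatible with the other interactions. In particular, we must show that high-frequency inputs lying in $Y_k$ rather than $X_k$ still satisfy the Strichartz and bilinear estimates used in the high$*$high cases. For this we would first try the embedding $Y_k\hookrightarrow X^{0,1/2,\infty}$ localized in modulation, as in the Benjamin--Ono theory of Ionescu--Kenig.
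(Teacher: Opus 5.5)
Your framework matches the paper's: the spaces $Z_k=X_k+Y_k$, the linear estimates, the embedding of $Y_k$ into modulation-localized $X_k$, and the H\"older split $L^\infty_xL^2_t\cdot L^2_xL^\infty_t\cdot L^2_xL^\infty_t\subset L^1_xL^2_t$ for the low-output-modulation part of high$*$low$*$low. The gap is that the quantitative claim driving your Step 3 is false.

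The maximal function estimate in $L^2_xL^\infty_{t\in I}$ costs $2^{3k/4}$ per dyadic block, as in \eqref{Lx2}, not $2^{k/4}$. This loss is sharp: an $L^2$-normalized coherent packet at frequency $2^k$ sweeps an interval of length $\sim 2^{2k}$ in unit time. The $2^{k/4}$ bound belongs to $L^4_xL^\infty_t$, and two $L^4_x$ factors against $L^\infty_xL^2_t$ only land in $L^2_{x,t}$, not in the $L^1_xL^2_t$ that $Y_k$ requires. So the critical term is bounded by $2^{3(k_2+k_3)/4}\prod_i\|f_{k_i}\|_{Z_{k_i}}$, which is exactly Proposition~\ref{hllh}(a). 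A positive power of $2^{k_i}$ cannot be ``absorbed by $L^2$-level norms'': the low factors must be measured in $W^{3/4}$. This is precisely where the threshold $s\ge 3/4$ comes from. The correct trilinear bound (Theorem~\ref{thm1}) has the form $\|u\|_{W^{3/4}}^2\|u\|_{W^s}$, not $\|u\|_{W^0}^2\|u\|_{W^s}$.

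With this correction your smallness mechanism collapses. For large $H^s$ data, $\|u\|_{W^{3/4}}$ is not small. You give no argument for a factor $T^\theta$, and it is not automatic here. The $b=1/2$, $\ell^1$-Besov and $L^1_xL^2_t$ structures give no power of $T$ from time localization alone, and at $s=3/4$ the critical bound has no slack to trade for one. Keeping $\|u\|_{W^0}\le C\delta$ along the iteration would also require an $N^0$ bound on the Duhamel term. There the high$*$high$\to$low interactions again need $3/4$ derivatives on the inputs (Proposition~\ref{hhhl}), so the same problem recurs.

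The missing idea is scaling. The paper closes the contraction only for small $\|u_0\|_{H^s}$, with no gain in $T$. It then uses the $L^2$-critical scaling $u_\lambda(x,t)=\lambda^{1/2}u(\lambda x,\lambda^3 t)$, under which $\|u_{0,\lambda}\|_{L^2}=\|u_0\|_{L^2}\le\delta$ while $\|u_{0,\lambda}\|_{\dot H^s}=\lambda^s\|u_0\|_{\dot H^s}$. Choosing $\lambda=\lambda(\|u_0\|_{H^s})$ small makes $\|u_{0,\lambda}\|_{H^s}\lesssim\delta$. Scaling back yields a solution on a time interval of length $T\sim\lambda^3$. This is the real reason only $\|u_0\|_{L^2}$ needs to be small: it is the part of the $H^s$ norm that scaling cannot shrink.
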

\begin{rem} Equation \eqref{eq0} is invariant under the scaling
\begin{equation*}
u_\lambda(x,t)
 :=\lambda^{1/2}u(\lambda x,\lambda^3t).
\end{equation*}
In particular,
$\norm{u_\lambda(x,0)}_{L^2}=\norm{u_0}_{L^2}$, so $L^2(\R)$ is the
scaling-critical Sobolev space. Thus, the smallness assumption on the $L^2$ norm in our results cannot
be removed by scaling.
\end{rem}

Equation \eqref{eq0} includes several equations of physical and
mathematical interest. One notable example is the Calogero--Degasperis--Ibragimov--Shabat (CDIS) equation
\begin{equation*}
		u_t = u_{xxx} + 3u_{xx} u^2 + 9 u_x^2 u + 3 u^4u_x. 
\end{equation*}
The CDIS equation can be exactly linearized by a suitable transformation of the dependent variable and admits a variety of explicit solutions (see \cite{FC87,EE}). Remarkably, some of these solutions exhibit soliton-like behavior. Another important example is the third-order flow in the Kaup--Newell
hierarchy \cite{KaupNewell1978}, which we study as a distinguished
completely integrable model in the second part of this paper:
\begin{equation}\label{maineq}
		\begin{cases}
			 u_t
 = u_{xxx}
    +3i\big(|u|^2u_x\big)_x
    -\frac32\,\bigl(|u|^4u\bigr)_x,\\
			u(x,0) = u_0(x),
		\end{cases}
	\end{equation}
where $u$ is a complex valued function of $(x,t)\in \mathbb{R} \times  \mathbb{R}$. It combines Airy dispersion with higher-order derivative nonlinearities while retaining the integrable structure inherited from the Kaup--Newell hierarchy. This structure will play an essential role in the construction of the low regularity conservation laws. Equivalent versions occur in the literature with different signs and coefficients. These versions \cite{GengLiXueGuan2015,ZhuChen2021} are related by time reversal, constant rescaling of time, and complex conjugation. This third-order flow admits a Lax representation, an infinite number of conservation laws, and a
multi-Hamiltonian structure, see \cite{Fan2001,GengLiXueGuan2015,ZhuChen2021} and the references therein. The standard dNLS equation, which arises in models of nonlinear wave propagation in nonlinear optics and plasma physics, is the second-order flow in the same Kaup--Newell hierarchy.

The standard Kaup--Newell spectral problem is
\begin{equation}
\label{standard-spatial}
 \Psi_x=U(\lambda;q,r)\Psi,
 \qquad
 U(\lambda;q,r)=-i\lambda^2\sigma_3+\lambda Q,
\end{equation}
where
\begin{equation*}
 \sigma_3=\begin{pmatrix}1&0\\0&-1\end{pmatrix},
 \qquad
 Q=\begin{pmatrix}0&q\\r&0\end{pmatrix}.
\end{equation*}
This is the common spatial operator for all flows in the Kaup--Newell
hierarchy, whereas the individual flows are distinguished by their time matrices.
Its Hamiltonian structure (see \cite{KaupNewell1978,Sasaki1982,Adams2025}) is 
\begin{equation}
\label{hamiltonian-flow}
 \pa_t\binom{r}{q}
 =\mathcal{J}\frac{\delta}{\delta(r,q)}
 \left(\sum_{n\geq0}\alpha_n I_n\right), 
\end{equation}
where $$\mathcal{J} :=-2i
 \begin{pmatrix}0&1\\1&0\end{pmatrix}
 \pa_x,$$
 and
\begin{equation*}
 I_n=\int_{\R}qY_n\,dx,
 \qquad
 Y_0=-\frac{r}{2i},
 \qquad
 Y_{n+1}=\frac{1}{2i}
 \left(\pa_xY_n+q\sum_{k=0}^{n}Y_{n-k}Y_k\right). 
\end{equation*}
The dNLS equation ($n=1$)
\begin{align*}
iu_t+u_{xx}=i(|u|^2u)_x
\end{align*}
and its higher-order hierarchy are obtained by retaining the odd-indexed
coefficients $\alpha_{2j-1}$ in \eqref{hamiltonian-flow}, which are of even dispersion order $2j$.  The even-indexed coefficients $\alpha_{2j}$ generate a sequence of flows with odd dispersion order $2j+1$.  Equation \eqref{maineq} is the first nontrivial member of this sequence, corresponding to $n=2$.

Keeping only $\alpha_2$ in \eqref{hamiltonian-flow} gives the coupled flow
\begin{equation}
\label{coupled-n2}
\left\{
\begin{aligned}
 q_t&=-\frac{\alpha_2}{4}
 \left\{q_{xxx}
 +\pa_x\left(-3iqq_xr-\frac32q^3r^2\right)\right\},\\
 r_t&=-\frac{\alpha_2}{4}
 \left\{r_{xxx}
 +\pa_x\left(3iqrr_x-\frac32q^2r^3\right)\right\}.
\end{aligned}
\right.
\end{equation}
For real $\alpha_2$, the reduction $r=-\overline q$ is preserved by \eqref{coupled-n2}.  With $\alpha_2=-4$ and $q=u$, the first equation in \eqref{coupled-n2} becomes exactly \eqref{maineq}.  Set
$$
 \Omega_+:=\{\lambda\in\C:{\rm Im}(\lambda^2)>0\}.
$$
For $u\in\S(\R)$ and $\lambda\in\Omega_+$, let $\psi_1^-(x,\lambda)$ and
$\psi_2^+(x,\lambda)$ denote the Jost solutions characterized by the asymptotic conditions
\begin{align*}
\psi_1^-(x,\lambda)
&=
e^{-i\lambda^2x}
\left[
\begin{pmatrix}
1\\
0
\end{pmatrix}
+o(1)
\right],
&& x\to-\infty,\\
\psi_2^+(x,\lambda)
&=
e^{i\lambda^2x}
\left[
\begin{pmatrix}
0\\
1
\end{pmatrix}
+o(1)
\right],
&& x\to+\infty.
\end{align*}

We define the corresponding Wronskian by
\begin{equation*}
 a(\lambda;u)
 :=\det\bigl(\psi_1^-(x,\lambda),\psi_2^+(x,\lambda)\bigr).
\end{equation*}
The Riccati equation yields that $\log a(\lambda;u)$ admits the following asymptotic expansion
\begin{equation}\label{log-a}
\log a(\lambda;u)
= \sum_{j=0}^{\infty}\frac{E_j(u)}{\lambda^{2j}},
\qquad |\lambda|\to\infty,\quad \lambda\in\overline{\Omega}_+,
\end{equation}
where the quantities $E_j(u)$ are conserved along the flow. The first
two are given by
\begin{align*}
E_0(u)
&=-\frac{i}{2}\int_{\mathbb R}|u|^2\,dx,\\
E_1(u)
&=-\frac14\int_{\mathbb R}u\overline{u}_x\,dx
+\frac{i}{8}\int_{\mathbb R}|u|^4\,dx.
\end{align*}
Moreover, $a(\lambda;u)$ can be realized as a perturbation determinant. We convert the preceding Lax pair \eqref{standard-spatial} into the operator normalization used in the low regularity perturbation determinant. Let
\begin{equation*}
 \kappa=-i\lambda^2,
 \qquad
 G_\lambda={\rm diag}(1,i\lambda),
 \qquad
 \Phi=G_\lambda^{-1}\Psi.
\end{equation*}
For $\kappa>0$ one may take $\lambda=e^{i\pi/4}\sqrt{\kappa}$. Under the reduction
$q=u$, $r=-\overline u$, the spatial equation becomes
\begin{equation*}
 \Phi_x=\mathcal U_{\kappa}(u)\Phi,
 \qquad
 \mathcal U_{\kappa}(u)=
 \begin{pmatrix}
 \kappa&-\kappa u\\
 i\overline u&-\kappa
 \end{pmatrix}.
\end{equation*}
Thus the corresponding first-order operator is
\begin{equation*}
 L_{\kappa}(u):=\pa_xI-\mathcal U_{\kappa}(u)
 =\begin{pmatrix}
 \pa_x-\kappa&\kappa u\\
 -i\overline u&\pa_x+\kappa
 \end{pmatrix}.
\end{equation*}
By computing the logarithmic determinant
\begin{align*}
	-\log\det\left( \begin{bmatrix}(\partial_x-\kappa)^{-1}&0\\0&(\partial_x+\kappa)^{-1}\end{bmatrix}
	\begin{bmatrix}\partial_x-\kappa & \kappa u \\-i\bar{u} & \partial_x+\kappa\end{bmatrix} \right),
\end{align*}
and performing a simple Taylor expansion, we obtain a conserved series of the form
\begin{align}\label{alpha}
	\alpha(\kappa;u)=-\log a(\sqrt{i\kappa};u)=\sum_{m=1}^{\infty}\frac{(-i\kappa)^m}{m}{\rm tr}\Bigl\{[(\partial_x-\kappa)^{-1}u(\partial_x+\kappa)^{-1}\bar{u}]^m\Bigr\}.
\end{align}

The construction of low regularity conservation laws through perturbation determinants was introduced by Killip, Vi\c{s}an,
and Zhang \cite{KillipVisanZhang2018} and subsequently applied
to the dNLS equation by Klaus and Schippa \cite{KlausSchippa2022} and Bahouri, Leslie, and Perelman
\cite{BLP2023}. We adapt this approach to the third-order Kaup--Newell flow. 

For sufficiently large $\kappa$, the series  in \eqref{alpha}  converges and may be differentiated term by term along a smooth solution. The Lax equation and the cyclicity of the trace imply that
$\frac{d}{dt}\alpha(\kappa;u(t))=0$. Equivalently, the time independence of $a(\lambda;u(t))$ follows from the time component of the Lax pair.  The imaginary part of the quadratic term in \eqref{alpha} gives a weighted $L^2$ quantity,
from which Sobolev norms can be recovered through suitable frequency weights. The main task is therefore to control the higher-order terms.

To obtain bounds for arbitrary initial mass, we use the large-data equicontinuity of spectral level sets established by Harrop-Griffiths, Killip, and Vi\c{s}an \cite{HGKV2023}. Refined estimates for the higher-order terms and the equicontinuity in $L^2$ space then yield a priori bounds without a smallness assumption on the initial mass. We now state the second main result.

\begin{thm}\label{thm-apriori}
Let $0\leq s<1$. If $u$ is the solution to \eqref{maineq} with initial data $u_0\in \S(\R)$. Then there exists a function $C_s:[0,\infty)\rightarrow [0,\infty) $ so that 
\begin{equation}\label{apriori0}
 \sup_{t}\norm{u(t)}_{H^s(\R)}
 \leq C_s(\norm{u_0}_{H^s(\R)}).
\end{equation}
\end{thm}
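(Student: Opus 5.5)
We follow the Killip--Vi\c{s}an--Zhang strategy based on the conserved quantity $\alpha(\kappa;u)$ in \eqref{alpha}, adapted to the Kaup--Newell normalization. Since $u_0\in\S(\R)$, the solution is smooth and $\alpha(\kappa;u(t))$ is conserved for all $\kappa$ large enough that the series converges. The first step is to analyse the quadratic term $m=1$. Writing $R_\pm=(\pa_x\mp\kappa)^{-1}$, a Fourier computation gives
\begin{equation*}
-i\kappa\,{\rm tr}\{R_+uR_-\bar u\}
=\int_{\R}\frac{-i\kappa}{-2\kappa}\cdot\frac{2\kappa}{\xi^2+4\kappa^2}\cdot(\ldots)\,|\hat u(\xi)|^2\,d\xi ,
\end{equation*}
up to a real or imaginary constant. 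Its imaginary part is therefore comparable to
\begin{equation*}
\int_{\R}\frac{\kappa^2}{\xi^2+4\kappa^2}|\hat u(\xi)|^2\,d\xi .
\end{equation*}
Call this quantity $\beta_2(\kappa;u)$; it is a weighted $L^2$ norm that sees frequencies $|\xi|\lesssim\kappa$ fully and damps higher ones. As in \cite{KillipVisanZhang2018,KlausSchippa2022,BLP2023}, for $0\le s<1$ one has
\begin{equation*}
\norm{u}_{H^s}^2\approx \norm{u}_{L^2}^2+\int_{\kappa_0}^\infty \kappa^{2s-1}\bigl(\norm{u}_{L^2}^2-\mathrm{Im}\,\beta_2(\kappa;u)\bigr)\,d\kappa
\end{equation*}
(with constants absorbed appropriately). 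Hence a uniform control of $\beta_2(\kappa;u(t))$ in terms of $\beta_2(\kappa;u_0)$ for all $\kappa\ge\kappa_0$, with errors integrable against $\kappa^{2s-1}$, yields \eqref{apriori0}. The restriction $s<1$ is exactly where this integral representation converges.

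The second step is to estimate the terms with $m\ge2$. Using $\norm{R_\pm}$ bounds and Hilbert--Schmidt estimates,
\begin{equation*}
\norm{\kappa^{1/2}R_\pm^{1/2}uR_\mp^{1/2}}_{\mathfrak I_2}^2\lesssim \int\frac{\kappa|\hat u(\xi)|^2}{\sqrt{\xi^2+4\kappa^2}}\,d\xi .
\end{equation*}
Each term of order $m$ is bounded by $(\kappa\,\|R_+^{1/2}uR_-^{1/2}\|_{\mathfrak I_2}^2)^{m}$. The factor $\kappa$ in \eqref{alpha} is the novelty compared with NLS: it makes the operator scaling-critical at $L^2$, so smallness does not come from $\kappa$ alone. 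This forces the use of the equicontinuity input. By \cite{HGKV2023}, the orbit $\{u(t)\}$ (or rather the spectral level set of $u_0$) is $L^2$-equicontinuous, since mass is conserved by $E_0$. Therefore
\begin{equation*}
\sup_t\int_{|\xi|>\kappa/C}|\hat u(t,\xi)|^2d\xi\to0\qquad(\kappa\to\infty),
\end{equation*}
and the relevant Hilbert--Schmidt norm, whose weight is essentially concentrated at high frequency relative to $\kappa$ after the critical rescaling, becomes small uniformly in $t$ once $\kappa\ge\kappa_0(u_0)$. Here $\kappa_0$ depends on $u_0$ only through its $H^s$ norm; using an $H^s$ tail bound on $u_0$ combined with equicontinuity, this gives the dependence on $\norm{u_0}_{H^s}$ alone. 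The series then converges and $\alpha=\alpha_2+O(\text{higher})$. The cubic and higher corrections are quartic and higher in $u$; for $m=2$, refined estimates show that they are bounded by $\kappa^{-\epsilon}$ times $\norm{u}_{H^s}^2$-type quantities, or by small multiples of $\|u\|_{L^2}^2-\mathrm{Im}\,\beta_2$. This allows absorption after integration against $\kappa^{2s-1}$.

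The third step combines the pieces. From conservation, $\mathrm{Im}\,\beta_2(\kappa;u(t))=\mathrm{Im}\,\beta_2(\kappa;u_0)+O(\text{err}(\kappa;u(t))+\text{err}(\kappa;u_0))$. Integrate over $\kappa\ge\kappa_0$ with weight $\kappa^{2s-1}$, and control the low-frequency part by mass. Absorbing the error via smallness, we obtain $\norm{u(t)}_{H^s}\le C_s(\norm{u_0}_{H^s})$.

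The main obstacle is the second step. One must produce error bounds for the quartic and higher terms that gain enough decay in $\kappa$ (or smallness proportional to the high-frequency part) to be integrable against $\kappa^{2s-1}$ up to $s<1$, and do so without small mass. For the quartic term we would try an explicit symbol computation,
\begin{equation*}
{\rm tr}\{(R_+uR_-\bar u)^2\}=\int \frac{\hat u\,\hat{\bar u}\,\hat u\,\hat{\bar u}}{\prod(i\xi_j\mp\kappa)}.
\end{equation*}
From this we would extract a bound of the form $\kappa^{-1}\|u\|_{L^2}^2\|u\|_{\dot H^{1/2}_\kappa}^2$, where the last factor is exactly the weighted quantity being controlled. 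We would then use equicontinuity to make the $L^2$ factor at frequencies $\gtrsim\kappa$ small.
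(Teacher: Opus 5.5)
Your architecture matches the paper's: conservation of $\alpha(\kappa;u)$, recovery of $\norm{u}_{H^s}$ from the quadratic part integrated against $\kappa^{2s-1}$, large-data equicontinuity (Lemma~\ref{equicontinuous}) in place of small mass, and absorption. The gap is in the step meant to make the higher-order terms small, which fails as stated. You claim that $\kappa\norm{\Lambda(\kappa;u)}_{\mathfrak I_2}^2$ has its weight concentrated at frequencies high relative to $\kappa$, so equicontinuity makes it uniformly small. It is the reverse. By \eqref{remainder1} the weight $\kappa\log(4+\xi^2/\kappa^2)(4\kappa^2+\xi^2)^{-1/2}$ is comparable to $1$ on $|\xi|\lesssim\kappa$ and decays only for $|\xi|\gg\kappa$. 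Hence $\kappa\norm{\Lambda(\kappa;u)}_{\mathfrak I_2}^2\gtrsim\norm{P_{\le\kappa}u}_{L^2}^2$, which approaches the full mass as $\kappa\to\infty$. Your bound $(\kappa\norm{\Lambda}_{\mathfrak I_2}^2)^m$ for the $m$-th term therefore gives convergence and smallness only for small mass, which is exactly the hypothesis the theorem removes. Equicontinuity controls $\sup_t\norm{P_{>N}u(t)}_{L^2}$ for a threshold $N$ independent of $\kappa$. Combined with Bernstein \eqref{op1}, it can make operator norms small, but never this Hilbert--Schmidt norm. Likewise, no bound of the form ``small multiple of $\varphi_{0,0}(\kappa;u)$'' can hold pointwise in $\kappa$: interactions among frequencies below $K$ leave an irreducible additive term.

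What is missing is the estimate \eqref{small1}. One decomposes $\norm{\Lambda\Gamma}_{\mathfrak I_2}^2$ dyadically with $N_1\sim N_2\ge N_3\ge N_4$. The two top pieces go in $\mathfrak I_2$; after the $\kappa$-integration they rebuild $\norm{u_{>K}}_{H^s}^2$. The two lower pieces go in operator norm, with a cutoff $\eta K$ independent of $\kappa$:
\begin{itemize}
\item if $N_3\le\eta K$, the Bernstein gain $\sqrt{N_3N_4}/\kappa^2$ yields a factor $\eta\norm{u}_{L^2}^2$, with $\eta$ chosen from the mass;
\item if $N_3>\eta K$, equicontinuity yields $\norm{u}_{L^2}\sup_t\norm{u_{>\eta K}}_{L^2}$;
\item the region $N_2\le K$ produces the additive term $K^{2s}\norm{u}_{L^2}^4$.
\end{itemize}
The low-$N_2$ regions involve $\int_K^\infty\kappa^{2s-2}\,d\kappa$, so this argument closes only for $s<1/2$.

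Your single-step scheme also does not reach $1/2\le s<1$. A remainder of size $\kappa^{-\epsilon}\norm{u}_{H^s}^2$ with small $\epsilon$ is not integrable against $\kappa^{2s-1}$. Moreover, the constants in any usable remainder bound depend on lower Sobolev norms of $u(t)$, which must already be controlled. The paper therefore bootstraps. The $s<1/2$ result bounds $\norm{u(t)}_{H^\beta}$ with $\beta=(s+1)/4<1/2$. Then \eqref{spectral-remainder} gives $|\varphi_0-\varphi_{0,0}|\le C(s,\norm{u}_{H^\beta})\kappa^{-s-1}(\norm{u}_{H^s}+1)$. This is integrable against $\kappa^{2s-1}$ precisely when $s<1$, and it is only linear in $\norm{u}_{H^s}$, so it can be absorbed.
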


For Equation \eqref{maineq}, the second-order derivative nonlinearity has a purely imaginary coefficient. By means of a gauge transformation, we can eliminate the worst term $|u|^2u_{xx}$. Specifically, we set
\begin{align*}
    v(x,t) = \mathcal{G}(u)(x,t) = e^{i \int_{-\infty}^{x} |u(y,t)|^2 dy} u(x,t),
\end{align*}
then Equation \eqref{maineq} can be rewritten as 
\begin{equation}\label{maineq2}
    \begin{cases}
         v_t=v_{xxx}-3iv|v_x|^2+\frac32|v|^4v_x,\\[2mm]
        v(x,0)= e^{i\int_{-\infty}^x|u_0(y)|^2\,dy} u_0(x).
    \end{cases}
\end{equation}
Since the map $\mathcal{G}$ is a diffeomorphism of $H^s(\R)$ into itself for every $s\geq 0$, the well-posedness of \eqref{maineq} in $H^s$ is equivalent to that of \eqref{maineq2}. In \cite{ChenChenXiao2026}, the authors proved that \eqref{maineq2} is locally well-posed in  $H^s$ for $s\geq 3/4$ by the standard Bourgain method. 
Combining this local theory with the a priori estimates \eqref{apriori0} yields global well-posedness for $3/4\le s<1$. Persistence of higher regularity then extends this conclusion to all $s\ge3/4$. We therefore obtain the following result.
\begin{cor}\label{globalwell}
 Let  $s\geq 3/4$ and $u_0 \in  H^{s}(\R)$. Then Equation \eqref{maineq} is globally well-posed.   
\end{cor}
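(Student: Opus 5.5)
The plan is to combine three ingredients: the gauge equivalence between \eqref{maineq} and \eqref{maineq2}, the local theory for \eqref{maineq2} from \cite{ChenChenXiao2026}, and the a priori bound of Theorem~\ref{thm-apriori}. The argument is a continuation argument, carried out first for $3/4\le s<1$ and then extended to $s\ge 1$ by persistence of regularity.

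\textbf{Step 1: local theory with lifespan depending only on the size of the data.} Since $\mathcal G$ is a diffeomorphism of $H^s$ (with inverse $v\mapsto e^{-i\int_{-\infty}^x|v|^2}v$, because $|v|=|u|$), the local well-posedness of \eqref{maineq2} in $H^s$, $s\ge 3/4$, transfers to \eqref{maineq}. This gives existence, uniqueness in the image of the resolution space, and continuous dependence. The key point to extract from \cite{ChenChenXiao2026} is that the existence time satisfies $T=T(\|v_0\|_{H^s})$ and is nonincreasing in $\|v_0\|_{H^s}$. This holds because the contraction argument is set on a ball whose radius and time depend only on the norm; any smallness needed can be produced by shrinking $T$, given that \eqref{maineq2} has no $L^2$ smallness requirement there. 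Since $\|\mathcal G(u_0)\|_{H^s}\le C(\|u_0\|_{H^s})$, the same statement holds for \eqref{maineq}.

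\textbf{Step 2: global existence for $3/4\le s<1$.} The estimate \eqref{apriori0} is stated only for Schwartz data, so we approximate. Take $u_{0,n}\in\S$ with $u_{0,n}\to u_0$ in $H^s$. The smooth solutions $u_n$ exist globally by classical high-regularity theory combined with the conserved quantities, or equivalently by the present argument applied at high regularity. They satisfy $\sup_t\|u_n(t)\|_{H^s}\le C_s(\|u_{0,n}\|_{H^s})$. Here we want $C_s$ to be nondecreasing; if it is not, replace it by $\sup_{r\le\rho}C_s(r)$, which can be arranged from the proof. Set $A=C_s(\|u_0\|_{H^s}+1)$ and $T_0=T(A)$. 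Iterating the local theory on the intervals $[kT_0,(k+1)T_0]$ and using Lipschitz dependence on each interval, $u_n$ converges in $C([-KT_0,KT_0];H^s)$ to the local solution $u$, for every $K$. Hence $u$ extends to all times with $\|u(t)\|_{H^s}\le A$, and continuous dependence on compact time intervals follows by the same chaining.

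\textbf{Step 3: $s\ge 1$.} We use persistence of regularity. For \eqref{maineq2}, the Bourgain-space estimates are tame, in the sense that the $H^s$ norm of the solution grows at most by a factor depending on the $H^{3/4}$ norm over a unit time step. Therefore the $H^s$ norm cannot blow up in finite time while the $H^{3/4}$ norm stays bounded, and Step 2 (applied with $s=3/4$) supplies that bound. Iterating gives global existence in $H^s$, possibly with exponential growth in time. Continuity of the data-to-solution map is inherited from the local theory on every finite time interval.

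The main obstacle I expect is Step 1. One has to verify that the local result in \cite{ChenChenXiao2026} provides a time depending only on $\|u_0\|_{H^s}$ and requires no $L^2$ smallness, unlike Theorem~\ref{thm-lwp}. One also has to confirm the tame form of the estimates needed in Step 3. If the cited proof uses smallness, I would remove it by a scaling-free argument: shrink the time interval and use the $T^{\theta}$ gain available in $X^{s,b}$ with $b<1/2$ slack.
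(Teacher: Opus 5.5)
Your proposal is correct and follows essentially the paper's route. Both arguments combine the gauge transfer of the local theory from \cite{ChenChenXiao2026}, the extension of the bound of Theorem~\ref{thm-apriori} to $H^s$ data by Schwartz approximation, a continuation argument for $3/4\le s<1$ using a lifespan that depends only on the $H^s$ norm, and persistence of regularity for $s\ge1$.

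The difference is only organizational. The paper applies the a priori bound to the rough solution on compact subintervals of its maximal lifespan, where the smooth approximants exist by continuous dependence, and then contradicts the blow-up alternative. You instead iterate with a uniform step $T_0$ along the smooth approximants, so you need them on all of $[-KT_0,KT_0]$. That is best justified by running the same iteration directly on the Schwartz solutions, using Theorem~\ref{thm-apriori} together with persistence of regularity. Appealing to classical high-regularity theory and conserved quantities does not give this for large data.
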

The paper is organized as follows. In Section~2 we introduce the function spaces $X_k$, $Y_k$, $Z_k$, $W^s$ and $V^s$, establish the
linear Airy estimates, and prove the crucial localized trilinear estimates.  In Section~3, we establish the trilinear and quintilinear estimates and prove Theorem \ref{thm-lwp}. In Section~4, we prove the 
conservation of the Kaup--Newell perturbation determinant for
\eqref{maineq}. Using equicontinuity together with precise estimates
for the remainder terms, we first derive a priori bounds for
$0\leq s<1/2$ and then extend these bounds to the higher regularity
range $1/2\leq s<1$, thereby completing the proof of
Theorem~\ref{thm-apriori}. 

{\bf Notations.} Let $c < 1$, $C>1$  denote positive universal constants, whose values may vary from line to line. We use the standard asymptotic notation $x\lesssim y$ to mean $x\leq Cy$ for some constant $C>0$, and write $a\sim b$ if $a\lesssim b$ and $b\lesssim a$.  $a\approx b$ means that $|a-b|\leq C$. The notation $A \ll B$ indicates that there exists a small positive constant $c>0$ such that $A\leq cB$.  $\S(\R)$ denotes the Schwartz space and $\S'(\R)$ the space of tempered distributions. The notation $\mathcal{F}$ ($ \mathcal{F}^{-1}$) is used to denote the (inverse) Fourier transform operators on $ \mathcal{S}'(\mathbb{R} \times \mathbb{R}) $. For convenience, $\mathcal{F}_x$ ($ \mathcal{F}^{-1}_\xi $) and $\mathcal{F}_t $ ($ \mathcal{F}^{-1}_\tau $) are employed to represent the (inverse) Fourier transforms with respect to the spatial and temporal variables, respectively. We also write $\widehat{\phi}$ for the Fourier transform of $\phi$. Let $P_N$ be an inhomogeneous Littlewood--Paley decomposition, with $P_1$ containing
the low frequencies, and set $u_N=P_Nu$, $u_{>L}=\sum_{N>L}u_N$.
	
	\section{Preliminaries}
    In this section, we develop the functional framework for the
local well-posedness analysis. We begin by introducing the resolution
spaces and establishing their basic properties, followed by the linear
estimates for the Airy evolution. We then derive the $[4;Z]$ multiplier
estimates adapted to the Airy resonance structure of the cubic
nonlinearity. These estimates provide the main analytical ingredients
for the nonlinear estimates and the proof of local well-posedness in
Section 3.
	
	\subsection{Function spaces}
Denote $\mathbb{Z}_+=\mathbb{Z}\cap[0,\infty)$. Let $\eta_0:\R\to [0,1]$ be an even smooth non-negative function supported in $[-8/5, 8/5]$ and equal to 1 in $[-5/4, 5/4]$.
For $\ell\in\Z$, let the homogeneous dyadic decomposition functions $	\chi_{\ell }(\xi):=\eta_0(\xi/2^{\ell})-\eta_0(\xi/2^{\ell-1})$ and then ${\rm supp}\ \chi_{\ell}\subset \{\xi:|\xi|\in[(5/8)\cdot2^{\ell},(8/5)\cdot2^{\ell}]\}$. For brevity, let
		\begin{align*}
			\chi_{[\ell_1,\ell_2]}:=\sum_{\ell=\ell_1}^{\ell_2}\chi_{\ell},\quad \forall \ell_1\leq \ell_2\in\Z.
		\end{align*}
For the nonhomogeneous dyadic decomposition functions, let
		\begin{equation}
			\eta_{\ell}:=\left\{\begin{array}{l}
				\chi_{\ell},\quad\ell\geq 1,\\
				0,\quad\ell\leq -1,\nonumber
			\end{array}\right.
			\quad 	\eta_{[{\ell}_1,{\ell}_2]}:=\sum_{\ell=\ell_1}^{\ell_2}\eta_{\ell},\quad \eta_{\leq \ell_2}:=\sum_{\ell=-\infty}^{\ell_2}\eta_{\ell}.
		\end{equation}
We also define the homogeneous dyadic intervals
		\begin{align*}
			I_\ell =\{\xi\in\R:|\xi|\in[2^{\ell -1},2^{\ell +1}]\},\  \forall \ell \in \Z,
		\end{align*}
and the nonhomogeneous dyadic intervals, $\forall \ell \in \Z_+$, let	$\tilde{I_0}=[-2,2]$, and $\tilde{I_{\ell}}=I_{\ell}$ if $\ell\geq 1$.
Let 
$$\omega(\xi)=-\xi^3,$$
for $k\in\Z$ and $j\in\Z_+$, we define
\begin{equation*}
\begin{cases}
			&D_{k,j}=\{(\xi,\tau)\in \R \times \R:\xi\in{I_k},\tau-\omega(\xi)\in\tilde{I_{j}}\},\quad {\rm if} \ k\geq 1,\\
			&D_{k,j}=\{(\xi,\tau)\in \R \times \R:\xi\in{I_k},\tau\in\tilde{I}_{j}\},\qquad\qquad  {\rm if} \ k\leq 0.
\end{cases}
\end{equation*}

	\begin{defn}
		For each $k\in\Z_+$, we define the Besov-type Banach space $X_k=X_k(\R\times\R)$ as the collection of all functions $f\in L^2(\R\times\R)$ satisfying:
		\begin{align*}
			X_k	=\{&f\in L^2(\R\times\R):{\rm supp}\ f\subset \tilde{I_k} \times \R \quad and \no\\&\|f \|_{X_k}:=\sum_{j=0}^{\infty}2^{\frac{j}{2}}\big(1+2^{\frac{j-3k}{2}}\big)\|\eta_j(\tau-\omega(\xi))f(\xi,\tau) \|_{L_{\xi,\tau}^2}<\infty\}.
		\end{align*}
	\end{defn}
	
To control the logarithmic divergence phenomena arising from the nonlinear interaction term, we introduce auxiliary spatial structures in our functional framework.
	
	\begin{defn}\label{def1}
		For each $k \geq 100$, we define the \textit{smoothing effect space} $Y_k = Y_k(\R\times\R)$
		\begin{align*}
			Y_k:=\{&f\in L^2(\R\times\R):{\rm supp}\ f\subset \bigcup_{j=0}^{2k-1}D_{k,j} \quad and\no\\& \|f \|_{Y_k}=2^{-k}\|\FF[(\tau-\omega(\xi)+i)f(\xi,\tau)] \|_{L_x^1L_t^2}<\infty\}.
		\end{align*}
\end{defn}
\begin{defn}\label{defnW}
		For $k \in \mathbb{Z}_+$, we define
		\begin{align*}
			Z_k:=X_k \ \text{if}\  0\leq k\leq 99,\  \  \text{and}\   Z_k:=X_k+Y_k \ \text{if}\  k\geq100.
		\end{align*}
For $s \in \mathbb{R}$, we define the Banach spaces $W^s = W^s(\mathbb{R} \times \mathbb{R})$ and $V^s=V^s(\R\times \R)$ by
		\begin{align*}
			W^s&=\{u\in \S'(\R\times\R):\|u \|_{W^s}^2=\sum_{k=0}^{\infty}2^{2sk}\|\eta_k(\xi)\F(u) \|_{Z_k}^2<\infty\},\\
			V^s&=\{u\in\S'(\R\times \R):\|u \|_{V^s}^2=\sum_{k=0}^{\infty}2^{2sk}\|\eta_k(\xi)(\tau-\omega(\xi)+i)^{-1}\F(u)\|_{Z_k}^2<\infty\}.
		\end{align*}
	\end{defn}

	\subsection{Basic Estimates}
	
	For any integer  $k\in\Z_+$ and function $f_k\in Z_k$, we decompose $f_k$ as
	\begin{equation*}
		\left\{\begin{array}{l}	f_k=\sum_{j=0}^{\infty}f_{k,j}+g_k,\\\sum_{j=0}^{\infty}2^{j/2}\|f_{k,j}\|_{L^2}+\|g_k\|_{Y_k}\leq 2\|f_k\|_{Z_k},
		\end{array}\right.
	\end{equation*}
	where each $f_{k,j}$ has frequency support in $D_{k,j}$  and $g_k$ is supported in $\bigcup_{j=0}^{2k-1} D_{k,j}$(with $g_k\equiv0$ for $k\leq 99$). We now summarize the essential properties of the $Z_k$ space.
	\begin{lem}\label{aaa}
		\item[\rm (a)]If $h$, $m:\R\to \mathbb{C},k\in \Z_+$, and $f_k\in Z_k$, then
		\begin{align*}
			&\|h(\xi)f_k(\xi,\tau)\|_{Z_k}\les\|\F^{-1}_{\xi}(h)\|_{L^1(\R)}\|f_k\|_{Z_k}, \\
			&\|m(\tau)f_k(\xi,\tau)\|_{Z_k}\les\|m\|_{L^{\infty}(\R)}\|f_k\|_{Z_k}.
		\end{align*}
		\item[\rm (b)]If $k\in \Z_+$, $j\geq 0$, and $f_k\in Z_k$, then
		\begin{align}\label{b}
			\|\eta_j(\tau-\omega(\xi))f_k(\xi,\tau)\|_{X_k}\les \|f_k\|_{Z_k}.
		\end{align}
		\item[\rm (c)]If $k\geq 1$, $j\in [0,2k]$, and $f_k$ is supported in $I_k\times \R$, then
		\begin{align}\label{c}
			\|\FF[\eta_{\leq j}(\tau-\omega(\xi))f_k(\xi,\tau)]\|_{L_x^1L_t^2}\les \|\FF(f_k)\|_{L_x^1L_t^2}.	
		\end{align}
	\end{lem}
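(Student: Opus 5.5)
We prove the three parts separately. In each part we handle the $X_k$ and $Y_k$ pieces of the atomic decomposition $f_k=\sum_j f_{k,j}+g_k$ one at a time, and then take the infimum over decompositions.

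For (a), multiplication by $m(\tau)$ is the easier half. On an $X_k$ piece, $|m|\le\|m\|_{L^\infty}$ pointwise, and the $\eta_j(\tau-\omega(\xi))$ localization is not literally preserved. We therefore use the standard equivalent characterization of $X_k$ by functions supported near $D_{k,j}$. If needed, we decompose $m f_{k,j}$ again; since $m$ does not move the $\tau$-support, the support of $m f_{k,j}$ stays in $D_{k,j}$, so $\|\eta_{j'}(\tau-\omega)mf_{k,j}\|$ vanishes unless $|j-j'|\le1$. On the $Y_k$ piece, $mg_k$ keeps its support in $\bigcup_{j<2k}D_{k,j}$. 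In physical space the multiplier $m(\tau)$ acts only in $t$, so for each fixed $x$ it is an $L^\infty$ Fourier multiplier on $L^2_t$, bounded by Plancherel. Taking the $L^1_x$ norm gives $\|mg_k\|_{Y_k}\le\|m\|_\infty\|g_k\|_{Y_k}$.

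For $h(\xi)$, the $X_k$ bound is the usual argument. Write $hf$ in physical space as a convolution in $x$ with $\F^{-1}_\xi h$, i.e.\ an average of spatial translates. Because of the modulation weight, translates preserve $\|\eta_j(\tau-\omega)\,\cdot\|_{L^2}$ exactly, and Minkowski's inequality gives the factor $\|\F^{-1}h\|_{L^1}$. The only caveat is that $\xi$-support may leave $\tilde I_k$; we multiply back by a fixed cutoff, or note that $hf$ has support in that of $f$. On $Y_k$ the multiplier $(\tau-\omega(\xi)+i)$ commutes with $h(\xi)$, and $L^1_xL^2_t$ is translation-invariant in $x$, so Minkowski again gives the bound. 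Support is unchanged since the $\xi$-support of $hg_k$ lies in that of $g_k$.

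For (b), the $X_k$ atoms are immediate from support overlap. The key case is the $Y_k$ atom $g_k$, where we must show $2^{j/2}(1+2^{(j-3k)/2})\|\eta_j(\tau-\omega)g_k\|_{L^2}\lesssim\|g_k\|_{Y_k}$ for $j<2k$; here $2^{(j-3k)/2}\le1$, so the weight is irrelevant. Set $G=\FF[(\tau-\omega+i)g_k]\in L^1_xL^2_t$. Then $\eta_j(\tau-\omega)g_k=\eta_j(\tau-\omega)(\tau-\omega+i)^{-1}\F G$ with $|\tau-\omega|\sim2^j$. This reduces matters to the dual local smoothing/maximal-type estimate
$$\|\eta_j(\tau-\omega(\xi))\mathbf 1_{I_k}(\xi)\F G\|_{L^2_{\xi,\tau}}\lesssim 2^{j/2}2^{-k}\|G\|_{L^1_xL^2_t}.$$
To prove it, we change variables $\tau=\omega(\xi)+\mu$ with $|\mu|\sim2^j$ fixed. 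For fixed $\mu$, the map $G\mapsto\widehat G(\xi,\omega(\xi)+\mu)$ on $\xi\in I_k$ is the adjoint of the Airy local smoothing estimate $\|D^1_xe^{t\partial_x^3}\phi\|_{L^\infty_xL^2_t}\lesssim\|\phi\|_{L^2}$. The Jacobian $|\omega'(\xi)|\sim2^{2k}$ from the substitution $\xi\mapsto\omega(\xi)$ in the $t$-Plancherel gives $\|\widehat G(\xi,\omega(\xi)+\mu)\|_{L^2_\xi(I_k)}\lesssim2^{-k}\|G\|_{L^1_xL^2_t}$. Integrating over $|\mu|\sim2^j$ gives the factor $2^{j/2}$, and the extra $2^{j/2}$ in the norm is cancelled by $|\tau-\omega+i|^{-1}\sim2^{-j}$. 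This Jacobian computation is the main point of the lemma, and it is where $\xi\in I_k$, $k\ge1$, is used.

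For (c), write $\eta_{\le j}(\tau-\omega(\xi))=\eta_0(2^{-j}(\tau+\xi^3))$. In physical space this is $e^{t\partial_x^3}$-conjugated time convolution: the operator equals $W(t)\,[\text{convolution in }t\text{ by }2^j\check\eta_0(2^j t)]\,W(-t)$. That formulation does not directly give $L^1_x$ bounds, so we instead expand $\eta_0(2^{-j}(\tau+\xi^3))$ around $\xi_0\sim 2^k$. On $I_k$, split $\xi$ into intervals $J$ of length $2^{j-2k}\lesssim1$, on which $\xi^3$ is approximated by its linearization $\xi_0^3+3\xi_0^2(\xi-\xi_0)$ with error $\lesssim 2^{k}|\xi-\xi_0|^2\lesssim2^{2j-3k}\le2^{j}$ when $j\le 2k$. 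The linearized symbol $\eta_0(2^{-j}(\tau+\xi_0^3+3\xi_0^2(\xi-\xi_0)))$ is a function of a single linear form in $(\xi,\tau)$. Its kernel is therefore a measure concentrated along a line in $(x,t)$, with $L^1$ mass $O(1)$, and it acts boundedly on $L^1_xL^2_t$ by Minkowski. The remainder is a smooth symbol with bounded derivatives on the rescaled scale and is handled by the same kernel bound.

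The main obstacle is recombining the pieces $J$ without losing a factor of their number. We plan to avoid this by working directly with the full symbol. Its kernel is $K(x,t)=\int\!\!\int e^{i(x\xi+t\tau)}\eta_0(2^{-j}(\tau+\xi^3))\chi_k(\xi)\,d\xi d\tau=2^j\check\eta_0(2^jt)\int e^{i(x\xi-t\xi^3)}\chi_k(\xi)d\xi$. We will then show $\sup_t\|K(\cdot,t)\|$ is bounded in the appropriate mixed sense: for $|t|\lesssim2^{-j}$, stationary phase and integration by parts give $\int|K|\,dx\lesssim 2^j\langle2^jt\rangle^{-N}\cdot(1+|t|2^{3k})^{1/2}$. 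With $j\le 2k$ there is a loss of up to $2^{k/2-j/2}$ that must be absorbed. We expect that, combined with the $\chi_k$ localization already present in $f_k$, the admissible range $j\in[0,2k]$ makes the bound $O(1)$ after a sharper treatment. This is done by using Minkowski in $t$ and integrating $\|K(\cdot,t)\|_{L^1_x}$ in $t$ against the $L^2_t$ structure (Young's inequality in $t$ for each $x$-translate), and this step is where we expect the real work to lie.
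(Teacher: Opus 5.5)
Your parts (a) and (b) are fine. Part (a) is the paper's argument: Young's inequality in $x$ and Plancherel in $t$ on the $Y_k$ piece, and a pointwise bound on $X_k$. In (b), your estimate $\|\widehat G(\xi,\omega(\xi)+\mu)\|_{L^2_\xi(I_k)}\lesssim 2^{-k}\|G\|_{L^1_xL^2_t}$, obtained from Minkowski in $x$ and the substitution $\sigma=\omega(\xi)+\mu$, is correct. It is equivalent to the paper's route, which combines $\sup_\xi|\widehat G(\xi,\tau)|\le\|\mathcal F_tG(\cdot,\tau)\|_{L^1_x}$ with the measure bound $|\{\xi\in I_k:|\tau-\omega(\xi)|\lesssim 2^j\}|\lesssim 2^{j-2k}$.

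\textbf{The gap is in (c).} You leave (c) unproved, and the route you propose cannot close. Minkowski in $x$ together with Young's inequality in $t$ bounds the operator by $\|K\|_{L^1_{x,t}}$, and this quantity is genuinely large. For $2^{-3k}\ll|t|\lesssim 2^{-j}$, the profile $\int e^{i(x\xi-t\xi^3)}\chi_k(\xi)\,d\xi$ has size $(|t|2^k)^{-1/2}$ on an $x$-interval of length $\sim|t|2^{2k}$. Hence $\|K(\cdot,t)\|_{L^1_x}\sim(|t|2^{3k})^{1/2}$ and $\|K\|_{L^1_{x,t}}\sim 2^{(3k-j)/2}\ge 2^{k/2}$ for $j\le 2k$; the loss is $2^{(3k-j)/2}$, not $2^{(k-j)/2}$. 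No sharper treatment of this same quantity can help, because Young's inequality in $t$ discards the oscillation of $K(x,\cdot)$ in $t$. For the same reason, your decomposition into $\sim 2^{3k-j}$ intervals of length $2^{j-2k}$ loses their number: in $(x,t)$-space all of them act at once.

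\textbf{The missing idea} is to Fourier transform in $t$ only and use Plancherel in $t$ at each fixed $x$. For fixed $\tau$, the operator is a convolution in $x$ with
\[
K_\tau(x)=\int e^{ix\xi}\chi_{[k-1,k+1]}(\xi)\eta_{\le j}(\tau-\omega(\xi))\,d\xi,
\]
so Minkowski gives
\[
\big\|\mathcal F^{-1}[\eta_{\le j}(\tau-\omega(\xi))f_k]\big\|_{L^1_xL^2_t}\lesssim\Big\|\sup_\tau|K_\tau|\Big\|_{L^1_x}\,\big\|\mathcal F^{-1}f_k\big\|_{L^1_xL^2_t}.
\]
For fixed $\tau$, the map $\xi\mapsto\tau+\xi^3$ is monotone with derivative $\sim 2^{2k}$ on $I_k$. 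So the symbol is a single bump in $\xi$ of width $\lesssim 2^{j-2k}$ around $-\tau^{1/3}$. This is where your pieces recombine for free: a given $\tau$ meets only $O(1)$ of them. Each $\xi$-derivative of the symbol costs at most $2^{2k-j}$ because $j\le 2k$. Integrating by parts twice gives $|K_\tau(x)|\lesssim 2^{j-2k}\big(1+(2^{j-2k}x)^2\big)^{-1}$ uniformly in $\tau$, and its $L^1_x$ norm is $O(1)$. This is exactly the paper's proof of (c).
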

	
	\begin{proof}
		For (a), the proof in $X_k$ is obvious. Therefore, we consider $f_k \in Y_k$. From the definition of $Y_k$ and Young's inequality, we have
		\begin{align*}
			\|h(\xi)f_k(\xi,\tau)\|_{Y_k}\les2^{-k} \| \F^{-1}_{\xi} h\|_{L_{x}^{1}} \|\FF[(\tau-\omega(\xi)+i)f_{k}]\|_{L_x^1L_t^2}=\|\F^{-1}_{\xi}h\|_{L_{x}^{1}}\|f_{k}\|_{Y_k}.
		\end{align*}
Plancherel's identity implies that
		\begin{align*}
			\|m(\tau)f_k(\xi,\tau)\|_{Y_k}\les2^{-k} \| m(\tau)\|_{L_{\tau}^{\infty}}\|\F^{-1}_{\xi}[(\tau-\omega(\xi)+i)f_{k}(\xi,\tau)]\|_{L_x^1L_{\tau}^2}=\|m\|_{L_{\tau}^{\infty}}\|f_{k}\|_{Y_k}.
		\end{align*}
		For (b), it suffices to consider $k\geq 100$, $f_k=g_k\in Y_k$, and $j\leq 2k$. Then $g_k$ can be written as
		\begin{equation}\label{g_k}
			\left\{\begin{array}{l}	 g_k(\xi,\tau)=2^k\chi_{[k-1,k+1]}(\xi)(\tau-\omega(\xi)+i)^{-1}\eta_{\leq 2k}(\tau-\omega(\xi))\int_{\R}e^{-ix\xi}h(x,\tau)dx,\\	\|g_k\|_{Y_k}=C\|h\|_{L_x^1L_{\tau}^2}.
			\end{array}\right.
		\end{equation}
		Notice that $|\{\xi\in I_k:|\tau-\omega(\xi)|\leq 2^{j+1}\}|\les 2^{j-2k}$, by the definition of $X_k$ and H\"older's inequality, we obtain
		\begin{align*}
			\text{LHS}\eqref{b}&\les2^{j/2}2^k\Big \|\chi_{[k-1,k+1]}(\xi)(\tau-\omega(\xi)+i)^{-1}\eta_j(\tau-\omega(\xi))\int_{\R}e^{-ix\xi}h(x,\tau)dx\Big\|_{L_{\xi,\tau}^2}\\
&\les 2^{-j/2}2^k2^{(j-2k)/2}\|h\|_{L_x^1L_{\tau}^2}\les\|g_k\|_{Y_k}.
		\end{align*}
For (c), By using the Plancherel's theorem, the Hausdorff-Young inequality and H\"older's inequality,
		\begin{align*}
			\text{LHS}\eqref{c}=\,&\|\F_{\xi}^{-1}[\chi_{[k-1,k+1]}(\xi)\eta_{\leq j}(\tau-\omega(\xi))f_k(\xi,\tau)]\|_{L_x^1L_{\tau}^2}
			\\\les &\|\F_{\xi}^{-1}[\chi_{[k-1,k+1]}(\xi)\eta_{\leq j}(\tau-\omega(\xi))]\|_{L_x^1L_{\tau}^{\infty}}\|\F_{\xi}^{-1}(f_k)\|_{L_x^1L_\tau^2}.
		\end{align*}
		It suffices to prove that
		\begin{align}\label{shang}
			\Big\|\int_{\R}e^{ix\xi }\chi_{[k-1,k+1]}(\xi)\eta_{\leq j}(\tau-\omega(\xi))d\xi \Big \|_{L_x^1L_{\tau}^{\infty}}\les 1.
		\end{align}
		For $k\les 1$ and $|x|\leq 1$, we have LHS\eqref{shang} $\les 2^{k}\les 1$. For $k\les 1$ and $|x|\geq 1$, by simple calculations, we can obtain the following result:
		\begin{align}\label{jifen2}
			\Big |\int_{\R}e^{ix\xi}\chi_{[k-1,k+1]}(\xi)\eta_{\leq j}(\tau-\omega(\xi))d\xi\Big |=\,&\Big|\int\frac{-d_\xi^2(e^{ix\xi})}{x^2}\chi_{[-1,1]}(2^{-k}\xi)\eta_0(2^{-j}(\tau-\omega(\xi)))d\xi\Big|\no \\\les& \frac{1}{x^2}(2^{2k-j})^22^{j-2k}\les \frac{2^{2k-j}}{x^2}\les \frac{1}{x^2}.
		\end{align}
		The integral of the RHS\eqref{jifen2} with respect to $x$ is convergent. For $k\gg 1$, we can compute to obtain the following result:
		\begin{align}\label{jifen}
			\text{LHS}\eqref{jifen2}=\,&\Big|\int\frac{(I-2^{2(j-2k)}\Delta)(e^{ix\xi})}{1+(2^{j-2k}x)^2}\chi_{[k-1,k+1]}(\xi)\eta_{\leq j }(\tau-\omega(\xi))d\xi\Big|\no \\\les&\frac{1}{1+(2^{j-2k}x)^2}\cdot[1+(2^{2(j-2k)}\cdot2^{2(2k-j)})]\cdot2^{j-2k}\les\frac{2^{j-2k}}{1+(2^{j-2k}x)^2} .
		\end{align}
		Since the integral of the RHS\eqref{jifen}  with respect to $x$ converges, the proof is complete.			
	\end{proof}
	\begin{lem}[Kenig et al. \cite{K2,B10,K3}]\label{estimate}
		Let $I\subset \R$ be a interval with $|I|\les 1$ and  $k \in \Z$. Then for all  $\varphi\in L^2(\R)$ with $\hat{\varphi}$ supported in ${I_k}$, we have\\
(1) Strichartz estimates:	
\begin{align*}
&\|e^{t\partial_x^3}\varphi\|_{L_t^8L_x^8}\les \|\varphi\|_{L^2};\\
&\|e^{t\partial_x^3}\varphi\|_{L_x^6L_t^6}\les 2^{-\frac{k}{6}}\|\varphi\|_{L^2}.
\end{align*}
(2) Smoothing effect estimates:
\begin{align*}
\|e^{t\partial_x^3}\varphi\|_{L_x^{\infty}L_t^2}\les 2^{-k}\|\varphi\|_{L^2}.
\end{align*}
(3) Maximal function estimates:
\begin{align*}		
		&\|e^{t\partial_x^3}\varphi\|_{L_x^2L_{t\in I}^{\infty}}\les 2^{\frac{3k}{4}}\|\varphi\|_{L^2};\\	
&\|e^{t\partial_x^3}\varphi\|_{L_x^4L_t^{\infty}}\les2^{\frac{k}{4}}\|\varphi\|_{L^2}.
	\end{align*}
	\end{lem}
	\begin{lem}\label{lemma2.4}
		Let $k\in\Z_+$, $s\in \R$ and $I\subset\R$ be an interval. Let $Y$ be $L_{t\in I}^qL_x^p$ or $L_x^pL_{t\in I}^q$ for some $1\leq p,q\leq \infty$, and satisfy
		\begin{align*}
			\|e^{t\partial_x^3}f\|_Y\les2^{sk}\|f\|_{L^2(\R)},
		\end{align*}
		for all $f\in L^2(\R)$ with $\hat{f}$ supported in $\tilde{I_k}$. Then we have that if $f_k\in Z_k$
		\begin{align*}
			\|\FF(f_k)\|_Y\les2^{sk}\|f_k\|_{Z_k}.
		\end{align*}
	\end{lem}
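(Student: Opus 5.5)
We split $f_k=\sum_{j\ge0}f_{k,j}+g_k$ as in the decomposition above, with $\sum_j 2^{j/2}\|f_{k,j}\|_{L^2}+\|g_k\|_{Y_k}\le 2\|f_k\|_{Z_k}$, and bound the two pieces separately; the triangle inequality in $Y$ then gives the claim.

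\emph{The $X_k$ part.} Fix $j$ and write $\tau=\omega(\xi)+\mu$ with $|\mu|\lesssim 2^{j}$, so that
\begin{equation*}
\FF(f_{k,j})(x,t)=\int e^{it\mu}\,\big[e^{t\partial_x^3}\phi_\mu\big](x)\,d\mu,
\qquad \widehat{\phi_\mu}(\xi)=f_{k,j}(\xi,\omega(\xi)+\mu),
\end{equation*}
up to the sign convention for $\omega$. The factor $e^{it\mu}$ has modulus one, so it does not affect the $Y$ norm in either order of integration. Minkowski's inequality and the hypothesis give
\begin{equation*}
\|\FF(f_{k,j})\|_Y\lesssim 2^{sk}\int_{|\mu|\lesssim2^j}\|\phi_\mu\|_{L^2}\,d\mu .
\end{equation*}
By Cauchy--Schwarz in $\mu$ and Plancherel, the right-hand side is at most $2^{sk}2^{j/2}\|f_{k,j}\|_{L^2}$. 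Summing over $j$ bounds the $X_k$ part by $2^{sk}\|f_k\|_{X_k}$.

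\emph{The $Y_k$ part.} This only arises for $k\ge100$. We use the representation \eqref{g_k}:
\begin{equation*}
g_k(\xi,\tau)=2^k\chi_{[k-1,k+1]}(\xi)(\tau-\omega(\xi)+i)^{-1}\eta_{\le 2k}(\tau-\omega(\xi))\,\widehat{h}^{x}(\xi,\tau),
\qquad \|h\|_{L_x^1L_\tau^2}\sim\|g_k\|_{Y_k}.
\end{equation*}
Writing $\widehat h^x(\xi,\tau)=\int e^{-iy\xi}h(y,\tau)\,dy$ and using Minkowski in $y$, it suffices to bound, for fixed $y$ and with $h_y(\tau)=h(y,\tau)$, the function
\begin{equation*}
2^k\,\FF\Big[\chi(\xi)(\tau-\omega(\xi)+i)^{-1}\eta_{\le2k}(\tau-\omega(\xi))\,e^{-iy\xi}h_y(\tau)\Big]
\end{equation*}
in $Y$ by $2^{sk}\|h_y\|_{L^2_\tau}$. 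Integrating over $y$ then gives $2^{sk}\|h\|_{L_x^1L^2_\tau}\lesssim 2^{sk}\|g_k\|_{Y_k}$.

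The obstacle is that the $X_k$ argument above loses a logarithm here: the weight $(\tau-\omega+i)^{-1}$ is not square summable against $2^{j/2}$ without using the restricted $\xi$-measure. To get around this, I would change variables $\tau=\omega(\xi)+\mu$ as before. The translation $e^{-iy\xi}$ commutes with $e^{t\partial_x^3}$ and preserves $Y$ norms, so it can be dropped. For fixed $\mu$, this gives
\begin{equation*}
e^{it\mu}\,e^{t\partial_x^3}\phi_\mu,
\qquad \widehat{\phi_\mu}(\xi)=2^k\chi(\xi)(\mu+i)^{-1}\eta_{\le2k}(\mu)\,h_y(\omega(\xi)+\mu).
\end{equation*}
Because $|\omega'(\xi)|\sim 2^{2k}$ on $I_k$, the change of variables $\tau=\omega(\xi)+\mu$ shows $\|\phi_\mu\|_{L^2}\lesssim 2^k\langle\mu\rangle^{-1}2^{-k}\|h_y\|_{L^2_\tau}$. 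Integrating in $\mu$ over $|\mu|\lesssim 2^{2k}$ then costs a factor $\log 2^{2k}\sim k$. To remove this factor, I would instead decompose dyadically in $\mu\sim2^j$. On each block, the piece of $\widehat{\phi}$ has $\xi$-support of measure about $2^{j-2k}$. Applying Cauchy--Schwarz over the $\mu$-block exactly as in the proof of Lemma~\ref{aaa}(b) gives, for block $j$, the bound $2^{sk}\cdot 2^{-j/2}2^k\cdot 2^{(j-2k)/2}\|h_y\|_{L^2}=2^{sk}\|h_y\|_{L^2}$. This is uniform in $j$, so summing over $j\le 2k$ still loses a factor $k$.

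The fix I would try first is to exploit the oscillation in $\mu$. The $\mu$-integral of $e^{it\mu}(\mu+i)^{-1}\eta_{\le 2k}(\mu)$ is a bounded Hilbert-transform-type kernel in $t$. So I would rewrite the $Y_k$ piece as $2^k\,\mathcal{T}\big[e^{t\partial_x^3}\psi\big]$, where $\mathcal{T}$ is a truncated Hilbert transform in time, uniformly bounded on $L^q_t$ for $1<q<\infty$. Here $\psi$ is defined by $\widehat\psi(\xi)=\chi(\xi)h_y(\ldots)$, and its $L^2$ norm is $2^{-k}\|h_y\|_{L^2}$ by the Jacobian. The endpoint cases $q=1,\infty$ would be handled by a standard Christ--Kiselev/$L^2_t$-based argument applied to the time-truncated Duhamel form $\int_{t'<t}e^{(t-t')\partial_x^3}\ldots$. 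This is the step I expect to require the most care.
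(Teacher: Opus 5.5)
Your $X_k$ part is the same as the paper's argument. You also correctly see that handling the $Y_k$ part block by block in the modulation loses a factor $k$. The gap is in the proposed cure.

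After the substitution $\tau=\omega(\xi)+\mu$, the data $\widehat{\phi_\mu}(\xi)$ contain $h_y(\omega(\xi)+\mu)$, where $h_y$ is an arbitrary $L^2$ function. So the $\mu$-dependence does not separate, and there is no single $\psi$ with $\FF(g_k)=2^k\mathcal T[e^{t\pa_x^3}\psi]$; your ``$h_y(\ldots)$'' is exactly where this fails. Indeed, for any Fourier multiplier $\mathcal T$ in $t$, $\F(\mathcal T e^{t\pa_x^3}\psi)$ is a measure on the curve $\tau=\omega(\xi)$, whereas $g_k$ is an $L^2$ function spread over modulations up to $2^{2k}$.

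Nor is the time kernel of $m(\mu)=(\mu+i)^{-1}\eta_{\le 2k}(\mu)$ of Hilbert type. Up to a constant it is $e^{t}\mathbf 1_{\{t<0\}}$ mollified at scale $2^{-2k}$, which is bounded in $L^1_t$ uniformly in $k$. Written correctly, the contribution of a fixed $y$ is, up to constants, $\int\check m(r)\,[e^{r\pa_x^3}G(\cdot,t-r)](x)\,dr$. Here $G(x,t)=2^k\check\chi(x-y)H_y(t)$, with $\check\chi=\F^{-1}_\xi\chi_{[k-1,k+1]}$ and $H_y=\F^{-1}_\tau h_y$. Since $G$ is not a free wave, using $\|\check m\|_{L^1}\les 1$ and the triangle inequality in $r$ gives, for $Y=L^\infty_xL^2_t$, only $2^{2k}\|h_y\|_{L^2}$ instead of $2^{-k}\|h_y\|_{L^2}$. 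So the cancellation that removes the logarithm is not in $t$.

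Falling back on Christ--Kiselev for this one-sided Duhamel-type problem with $L^1_xL^2_t$ forcing does not give the lemma in its stated generality either. It needs a strict gap between the source and target time exponents. That gap is absent for $Y=L^\infty_xL^2_t$, which is precisely the case used in \eqref{Lt2}.

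The missing idea is to integrate in $\xi$ at fixed $\tau$, transversally to the curve. On the support of $g_k$ one has $|\tau|\sim 2^{3k}$ and $|\tau^{1/3}+\xi|\les 1$. With $\nu=\tau^{1/3}+\xi$, the paper replaces $(\tau+\xi^3+i)^{-1}$ by the factorized symbol $[-\tau^{1/3}\xi(3\nu+i2^{-2k})]^{-1}$.

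The error is $\les 2^k|h(\tau)|(\langle\tau+\xi^3\rangle^{-2}+2^{-2k})$ on $\{|\tau+\xi^3|\les 2^{2k}\}$. This is summable over modulation blocks, so the $X_k$ bound applies to it without a logarithm.

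In the main term, $\xi^{-1}$ acts as an $x$-convolution with an $L^1$ kernel of norm $\sim 2^{-k}$. After the change of variables $\xi=\nu-\tau^{1/3}$, what remains factors into two pieces:
\begin{itemize}
\item the function $\int e^{ix\nu}\eta_0(\nu)(3\nu+i2^{-2k})^{-1}\,d\nu$ of $x$, which is bounded uniformly in $k$. It is essentially a mollified one-sided cutoff in $x$, and this is where the Hilbert-transform cancellation really lives.
\item the function $\int e^{it\tau-ix\tau^{1/3}}\chi_{[k-1,k+1]}(-\tau^{1/3})\tau^{-1/3}h(\tau)\,d\tau$. After $\tau=\theta^3$ this is an exact free Airy wave with data of $L^2$ norm $\les\|h\|_{L^2_\tau}$, so the hypothesis applies to it directly.
\end{itemize}
Multiplication by a bounded function of $x$ and $x$-convolution with an $L^1$ kernel are bounded on both $L^q_tL^p_x$ and $L^p_xL^q_t$ for all $1\le p,q\le\infty$. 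This gives the lemma for every admissible $Y$, with no loss and no Christ--Kiselev argument.
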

	
	\begin{proof}
		Firstly, we assume that $f_k = f_{k,j}$ with $\|f_k\|_{X_k}=2^{j/2}(1+2^{(j-3k)/2})\|f_{k,j}\|_{L_{\xi,\tau}^2}$ and that $f_{k,j}$ is supported in $D_{k,j}$ for some $j\geq 0$. Then we obtain
		\begin{align*}
			\FF(f_k)(x,t)=\int e^{ix\xi+it\tau}f_{k,j}(\xi,\tau)d\xi d\tau
			=\int_{\tilde{I_j}} e^{it\tau}\int e^{ix\xi-it\xi^3}f_{k,j}(\xi,\tau-\xi^3) d\xi d\tau.
		\end{align*}
		An immediate consequence of the hypothesis on $Y$ and Minkowski inequality is
		\begin{align}\label{Xk}
			\|\FF(f_k)(x,t)\|_Y\les&\int \eta_j(\tau) \Big \|e^{it\tau}\int f_{k,j}(\xi,\tau-\xi^3)e^{ix\xi-it\xi^3} d\xi\Big \|_{Y} d\tau\nonumber\\
\les&2^{sk}2^{j/2}(1+2^{(j-3k)/2})\|f_{k,j}\|_{L_{\xi,\tau}^2}=2^{sk}\|f_k\|_{X_k}.
		\end{align}
		Secondly, we assume that $k\geq 100$ and $f_k = g_k \in Y_k$. According to the definition of $ Y_k $, we can express $g_k$ as \eqref{g_k}
$$g_k(\xi,\tau)=2^k\chi_{[k-1,k+1]}(\xi)(\tau-\omega(\xi)+i)^{-1}\eta_{\leq 2k}(\tau-\omega(\xi))\int_{\R}e^{-iy\xi}h(y,\tau)dy.$$
From the Minkowski inequality, fix $y$ and define 
$$f(\xi,\tau)=2^k\chi_{[k-1,k+1]}(\xi)(\tau+\xi^3+i)^{-1}\eta_{\leq 2k}(\tau+\xi^3)h(\tau),$$
it suffices to prove that
		\begin{align}\label{Ff}
			\big\|\FF \big(f(\xi,\tau)\big)\big\|_{Y}\les 2^{ks}\|h\|_{L_{\tau}^2}.
		\end{align}
		Since $|\xi|\in [2^{k-2},2^{k+2}]$, $|\tau+\xi^3|\leq 2^{2k+1}$, then $|\tau|\in [2^{3k-9},2^{3k+9}]$. By symmetry, we may assume $\tau\in[2^{3k-9},2^{3k+9}]$. Since $|\tau+\xi^3|\leq 2^{2k+1}$ and $\tau+\xi^3=(\tau^{1/3}+\xi)(\tau^{2/3}-\tau^{1/3}\xi+\xi^2)$, we have $\xi\in [-2^{k+2},-2^{k-2}]$ and $|\tau^{1/3}+\xi|\leq C$. Therefore, let
		\begin{align*}
			f'(\xi,\tau)=2^k\chi_{[k-1,k+1]}(-\tau^{1/3})\eta_0(\tau^{1/3}+\xi)(\tau+\xi^3-(\tau^{1/3}+\xi)^3-i2^{-2k}\tau^{1/3}\xi)^{-1}h(\tau),
		\end{align*}
and $\mu=|\tau+\xi^3|+1$. Through direct calculation, we obtain 
\begin{align*}
&\bigg|\chi_{[k-1,k+1]}(\xi)\eta_{\leq 2k}(\tau+\xi^3)\bigg(\frac{1}{\tau+\xi^3+i}-\frac{1}{\tau+\xi^3-(\tau^{1/3}+\xi)^3-i2^{-2k}\tau^{1/3}\xi}\bigg)\bigg|\lesssim \frac{\eta_{\leq 2k+5}(\mu)}{\mu^2},
\end{align*}
and when $|\tau^{1/3}+\xi|\sim 1$,
\begin{align*}
\Big|\big[\tau+\xi^3-(\tau^{1/3}+\xi)^3-i2^{-2k}\tau^{1/3}\xi\big]^{-1}\Big|=\Big|\big[-\tau^{1/3}\xi(3\tau^{1/3}+3\xi+i2^{-2k})\big]^{-1}\Big|\sim 2^{-2k}.
\end{align*}
Thus, we have
		$$|f(\xi,\tau)-f'(\xi,\tau)|\les 2^{k}|h(\tau)|\eta_{\leq 2k+5}(\mu)(\mu ^{-2}+2^{-2k}),$$
which implies from \eqref{Xk} that
		\begin{align}\label{Fff'}
			\|\FF(f-f')\|_Y&\les2^{sk}\|f-f'\|_{X_k}\les 2^{sk}\sum_{j\leq 2k+C }2^{j/2}2^{k}\|\eta_{j}(\mu )h(\tau)(\mu ^{-2}+2^{-2k})\|_{L_{\mu,\tau}^2}\nonumber\\
&\les 2^{sk}\sum_{j\leq 2k+C}2^{j/2}2^{k}2^{(j-2k)/2}(2^{-2j}+2^{-2k})\|h(\tau)\|_{L_{\tau}^2}\nonumber\\
&\les 2^{sk}\|h(\tau)\|_{L_\tau^2}.
		\end{align}
		Next, we consider $\|\FF(f')\|_Y$. Notice that 
\begin{align*}
\big\|\F^{-1}_{\xi}\big(\xi^{-1}\chi_{k}(\xi)\big)\big\|_{L^1_x}\sim 2^{-k},
\end{align*}
then by utilizing Young's inequality and performing the change of variables $\xi=\mu -\tau^{1/3}$, we obtain
		\begin{align}\label{Ff'}
		\|\FF f'\|_{Y}&\sim 2^k 2^{-k}\Big\|\int e^{it\tau-ix\tau^{1/3}}\chi_{[k-1,k+1]}(-\tau^{1/3})h(\tau)\tau^{-1/3}d\tau\cdot\int\frac{e^{ix\mu }\eta_0(\mu )}{3\mu +i2^{-2k}}d\mu \Big \|_{Y}\nonumber\\
&\les \Big\|\int e^{it\theta^3-ix\theta}\chi_{[k-1,k+1]}(-\theta)h(\theta^3)\theta d\theta\Big \|_{Y}\nonumber\\
&\les 2^{ks}\|\chi_{[k-1,k+1]}(-\theta)h(\theta^3)\theta \|_{L_\theta^2}\les 2^{ks}\|h\|_{L_{\tau}^2},
	\end{align}
where we used that the absolute value of the integral in $\mu$ is bounded. 	Therefore, combining \eqref{Fff'} and \eqref{Ff'}, we have obtained the desired conclusion \eqref{Ff}. 
\end{proof}
By Lemma \ref{estimate} and Lemma \ref{lemma2.4}, the proposition follows directly.
\begin{prop}\label{prop2.5}{\rm (Free solution estimates)} Let $k\in \Z_+$, $I\subset \R$ and $|I|\les 1$. For $f_k\in Z_k$, we have
\begin{align}
		&\|\FF(f_k)\|_{L_t^8L_x^8}\les\|f_k\|_{Z_k};\notag\\
&\|\FF(f_k)\|_{L_x^6L_t^6}\les 2^{-\frac{k}{6}}\|f_k\|_{Z_k}, \quad k\geq 1; \label{L6}\\
&\|\FF(f_k)\|_{L_x^{\infty}L_t^2}\les2^{-k}\|f_k\|_{Z_k},\quad k\geq 1;\label{Lt2}\\
&\|\FF(f_k)\|_{L_x^2L_{t\in I}^{\infty}}\les2^{\frac{3k}{4}}\|f_k\|_{Z_k};\label{Lx2}\\
&\|\FF(f_k)\|_{L_x^4L_t^{\infty}}\les2^{\frac{k}{4}}\|f_k\|_{Z_k}\label{Lx4}.
\end{align}
\end{prop}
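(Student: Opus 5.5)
Each estimate in Proposition~\ref{prop2.5} will be obtained by feeding the matching free Airy estimate of Lemma~\ref{estimate} into the transference principle of Lemma~\ref{lemma2.4}. For a fixed estimate I take $Y$ to be the relevant mixed norm. Concretely, $Y=L^8_tL^8_x$ with $s=0$, or $Y=L^6_xL^6_t$ with $s=-1/6$. For the smoothing effect I take $Y=L^\infty_xL^2_t$ with $s=-1$. For the maximal estimates I take $Y=L^2_xL^\infty_{t\in I}$ with $s=3/4$, or $Y=L^4_xL^\infty_t$ with $s=1/4$. Each of these is of the form $L^q_{t\in I}L^p_x$ or $L^p_xL^q_{t\in I}$ allowed in Lemma~\ref{lemma2.4}; where no time restriction appears, I take $I=\R$.

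The only thing to verify is the hypothesis of Lemma~\ref{lemma2.4}. That hypothesis concerns $\hat f$ supported in $\tilde I_k$, whereas Lemma~\ref{estimate} is stated for $\hat\varphi$ supported in $I_k$.

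For $k\ge1$ the two sets coincide, since $\tilde I_k=I_k$, so the hypothesis is exactly Lemma~\ref{estimate}.

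For $k=0$ we have $\tilde I_0=[-2,2]$, which is covered by $I_1$ together with the dyadic pieces $I_\ell$, $\ell\le0$. I split $f$ into frequency pieces in these sets. For the $L^8_{t,x}$ bound, which is scale invariant, and for the maximal bounds with $2^{k/4},2^{3k/4}$ and $\ell\le 0$, the dyadic constants are $\lesssim1$. For the $L^8$ norm I would instead directly use the global Strichartz estimate $\|e^{t\partial_x^3}\varphi\|_{L^8_{t,x}}\lesssim\|\varphi\|_{L^2}$, which holds without any frequency localization (this is the content of Kenig--Ponce--Vega). For the maximal estimates at $k=0$ I would likewise cite the low-frequency versions, which are valid for frequencies in $[-2,2]$ with constant $O(1)$. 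This is why the $L^6$ and $L^\infty_xL^2_t$ bounds are stated only for $k\ge1$: their constants $2^{-k/6},2^{-k}$ blow up at low frequency, so they cannot be summed over $\ell\le 0$.

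With the hypothesis in hand, Lemma~\ref{lemma2.4} gives $\|\FF(f_k)\|_Y\lesssim 2^{sk}\|f_k\|_{Z_k}$ for every $f_k\in Z_k$, which is the claim. That lemma already handles both the $X_k$ atoms and the $Y_k$ component.

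The only mild obstacle is the $k=0$ case for the maximal and $L^8$ estimates, that is, justifying the free estimate on $\tilde I_0$ rather than on a single dyadic shell. I would resolve it by appealing to the non-localized forms of these estimates in the cited references, or by splitting $[-2,2]$ into finitely many pieces plus a neighborhood of the origin handled via Bernstein and $|I|\lesssim1$.
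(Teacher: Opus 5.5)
Your proposal is correct and is exactly the paper's argument: the paper simply states that the proposition follows directly from Lemma~\ref{estimate} combined with the transference Lemma~\ref{lemma2.4}, with the choices of $Y$ and $s$ that you list. Your extra check at $k=0$ addresses a point the paper passes over. There Lemma~\ref{lemma2.4} needs the free estimate for data supported in $\tilde I_0=[-2,2]$ rather than in a dyadic shell, and your final fix is the right one: cite the non-localized $L^8_{t,x}$ Strichartz estimate and the $O(1)$ low-frequency maximal estimates. The dyadic splitting over $\ell\le 0$ that you mention first would not work, since the $O(1)$ constants do not sum over infinitely many pieces, and the literal bound $2^{3\ell/4}$ for $L^2_xL^\infty_{t\in I}$ is false for $\ell<0$.
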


\subsection{Localized trilinear estimates}
We now develop the $[4;Z]$ multiplier estimates adapted to the Airy resonance structure of the cubic nonlinearity, a fundamental step in establishing the subsequent nonlinear bounds. For $\xi_1,\xi_2,\xi_3\in \R$ and $\omega(\xi)=-\xi^3$, let
\begin{align*}
	\Omega(\xi_1,\xi_2,\xi_3)=\omega(\xi_1)+\omega(\xi_2)+\omega(\xi_3)-\omega(\xi_1+\xi_2+\xi_3),
\end{align*}
which is called the resonance function. For non-negative, compactly supported functions $f,g,h,u$, we define the multilinear functional
\begin{align*}
	J(f,g,h,u)=\,&\int_{\R^6} f(\xi_1,\tau_1) g(\xi_2,\tau_2) h(\xi_3,\tau_3)\\& u(\xi_1+\xi_2+\xi_3,\tau_1+\tau_2+\tau_3+\Omega(\xi_1,\xi_2,\xi_3)) d\xi_1d\xi_2d\xi_3d\tau_1d\tau_2d\tau_3.
\end{align*}
For convenience, given integers $k_1,k_2,k_3,k_4 \in \mathbb{Z}$, we denote by $k_{\text{max}}\geq k_{\text{sub}}\geq k_{\text{thd}}\geq k_{\text{min}}$ the maximum, second largest, third largest, and minimum values among $k_1,k_2,k_3,k_4$, respectively. 
Similarly, for nonnegative integers $j_1,j_2,j_3,j_4 \in \mathbb{Z}_+$, the quantities $j_{\text{max}}\geq j_{\text{sub}}\geq j_{\text{thd}}\geq j_{\text{min}}$ stand for the maximum, second largest, third largest, and minimum of $j_1,j_2,j_3,j_4$, respectively. 
$\mathbf{1}_A$ denotes the indicator function of set $A$.

\begin{lem}\label{4Zmultiplier}{\rm ($[4;Z]$ multiplier estimates)} Let $k_1,k_2,k_3,k_4 \in \mathbb{Z}$, $j_1,j_2,j_3,j_4 \in \mathbb{Z}_+$, and let $f_{k_i,j_i} \in L^2(\mathbb{R}^2)$ ($i=1,2,3$) be nonnegative functions supported in $D_{k_i,j_i}$. Define
$$
E = \left\| \mathbf{1}_{D_{k_4,j_4}}(\xi, \tau) \cdot (f_{k_1,j_1} \ast f_{k_2,j_2} \ast f_{k_3,j_3}) \right\|_{L^2},
$$
where $\ast$ denotes convolution on $\mathbb{R}^2$. Then the following estimates hold:

\item[\rm (a)] For any $k_1,k_2,k_3,k_4 \in \mathbb{Z}$ and $j_1,j_2,j_3,j_4 \in \mathbb{Z}_+$,
\begin{align}\label{a1}
E \lesssim 2^{\frac{k_{\rm{min}} + k_{\rm{thd}}}{2}} 2^{\frac{j_{\rm{min}} + j_{\rm{thd}}}{2}} \prod_{i=1}^3 \| f_{k_i,j_i} \|_{L^2}.
\end{align}

\item[\rm (b)] For any $k_1,k_2,k_3,k_4 \in \mathbb{Z}$, $j_1,j_2,j_3,j_4 \in \mathbb{Z}_+$, and $i \in \{1,2,3,4\}$, then
\begin{align}\label{c1}
E \lesssim 2^{\frac{j_1+j_2+j_3+j_4}{2}} 2^{-\frac{k_1+k_2+k_3+k_4}{6}} 2^{-\frac{j_i}{2} + \frac{k_i}{6}} \prod_{i=1}^3 \| f_{k_i,j_i} \|_{L^2}.
\end{align}

\item[\rm (c)] $k_{\rm{thd}} \leq k_{\rm{max}} - 10$. If there exists $i \in \{1,2,3,4\}$ such that $(k_i,j_i) = (k_{\rm{thd}}, j_{\rm{max}})$, then
  \begin{align}\label{b1}
  E \lesssim 2^{\frac{j_1+j_2+j_3+j_4}{2}} 2^{-\frac{j_{\rm{max}}}{2}} 2^{-k_{\rm{max}}} 2^{\frac{k_{\rm{thd}}}{2}} \prod_{i=1}^3 \| f_{k_i,j_i} \|_{L^2}.
  \end{align}
  Otherwise,
  \begin{align}\label{b2}
  E \lesssim 2^{\frac{j_1+j_2+j_3+j_4}{2}} 2^{-\frac{j_{\rm{max}}}{2}} 2^{-k_{\rm{max}}} 2^{\frac{k_{\rm{min}}}{2}} \prod_{i=1}^3 \| f_{k_i,j_i} \|_{L^2}.
  \end{align}

\item[\rm (d)] $k_{\rm{min}} \leq k_{\rm{max}} - 15$, then 
  \begin{align*}
  E \lesssim 2^{\frac{j_1+j_2+j_3+j_4}{2}} 2^{-\frac{11k_{\rm{max}}}{6}} 2^{-\frac{k_{\rm{min}}}{6}} \prod_{i=1}^3 \| f_{k_i,j_i} \|_{L^2}.
  \end{align*}
\end{lem}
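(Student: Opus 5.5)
We argue as in Tao's $[k;Z]$ framework \cite{Tao2001}, adapted to $\omega(\xi)=-\xi^3$. By duality,
\[
E=\sup_{\|f_4\|_{L^2}\le1} J(f_{k_1,j_1},f_{k_2,j_2},f_{k_3,j_3},\tilde f_4),
\]
where $\tilde f_4$ is a reflected and modulated version of a nonnegative $L^2$ function supported in $D_{k_4,j_4}$. Changing variables $\tau_i\mapsto \tau_i-\omega(\xi_i)$ puts every function in modulation coordinates. In these coordinates $J$ becomes an integral over the hyperplane $\xi_1+\xi_2+\xi_3+\xi_4=0$, $\lambda_1+\lambda_2+\lambda_3+\lambda_4=-\Omega$ (with appropriate signs), and each $|\lambda_i|\lesssim 2^{j_i}$. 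Throughout we use the symmetry of $J$ under permutations of the four slots. This lets us assume whichever ordering of $k_i,j_i$ is convenient, with the caveat that the factor placed in $L^2$-dual position is arbitrary.

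For (a), fix the two functions carrying $j_{\min},j_{\rm thd}$ and apply Cauchy--Schwarz twice. Integrating out the $\tau$-variables of those two functions over sets of measure $\lesssim 2^{j_{\min}}, 2^{j_{\rm thd}}$ gives $2^{(j_{\min}+j_{\rm thd})/2}$. Integrating out their $\xi$-variables over sets of measure $\lesssim 2^{k_{\min}},2^{k_{\rm thd}}$ gives $2^{(k_{\min}+k_{\rm thd})/2}$. This is the trivial $L^2\times L^2\times L^\infty\times L^\infty$-type bound, using that the frequencies of the two smallest factors are localized.

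For (b), decompose each $f_{k_i,j_i}$ in $\tau$ into free Airy waves, exactly as in the proof of Lemma~\ref{lemma2.4}. Each factor then costs $2^{j_i/2}$ times a free solution. Write $E$ (or $J$) as $\int \prod_i \FF(\cdot)\,dxdt$. For three of the factors use the $L_x^6L_t^6$ Strichartz estimate of Lemma~\ref{estimate}, which gives $2^{-k_i/6}$. The remaining $i$-th factor is placed in $L^2$ and costs $\|f_{k_i,j_i}\|_{L^2}$ with no $2^{j_i/2}$ gain; this explains the correction $2^{-j_i/2+k_i/6}$. Hölder with $1/6+1/6+1/6+1/2=1$ closes the estimate.

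Part (d) is handled the same way, using the smoothing estimate \eqref{Lt2}-type bound $\|e^{t\partial_x^3}\varphi\|_{L^\infty_xL^2_t}\lesssim 2^{-k}$. When $k_{\min}\le k_{\max}-15$, two of the factors must have frequency $\sim 2^{k_{\max}}$. We place these two in $L_x^\infty L_t^2$, gaining $2^{-2k_{\max}}$ and using the dual $L_x^1L_t^2$ for the product. The other two go into $L_x^6L_t^6$-type norms, or more precisely into a bilinear $L^2_xL^\infty_t\cdot L^2$ combination chosen so that the powers come out as $2^{k_{\max}/6}2^{-k_{\min}/6}$ (for instance maximal function $L_x^4L_t^\infty$ paired with Strichartz). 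Checking exponents: $-2k_{\max}+k_{\max}/6=-11k_{\max}/6$. If the exponent count fails, a bilinear smoothing estimate for separated frequencies is the fallback.

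The main step is (c), which uses the resonance. We have $\Omega=3(\xi_1+\xi_2)(\xi_2+\xi_3)(\xi_3+\xi_1)$ (up to sign). When $k_{\rm thd}\le k_{\max}-10$, two frequencies are $\sim 2^{k_{\max}}$ and nearly opposite, and $|\Omega|\sim 2^{2k_{\max}}\,|\text{(pair sum)}|$. Since $\sum\lambda_i=-\Omega$, we get $2^{j_{\max}}\gtrsim|\Omega|$. We integrate out the $\lambda$ of the $j_{\max}$ function using the constraint. This leaves the Jacobian of the map $\xi\mapsto\Omega$ in the remaining frequency variable, and its derivative is $\gtrsim 2^{2k_{\max}}$ in the appropriate variable. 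Cauchy--Schwarz then gives $2^{(\sum j_i-j_{\max})/2}$ times (measure of the free low-frequency variable)$^{1/2}\times(2^{2k_{\max}})^{-1/2}$, that is, $2^{-k_{\max}}$ times $2^{k_{\min}/2}$ or $2^{k_{\rm thd}/2}$.

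The delicate point is which frequency is integrated against the Jacobian. If the $j_{\max}$ slot is the $k_{\rm thd}$ factor, one loses the ability to use the smallest interval and only obtains $2^{k_{\rm thd}/2}$, which is \eqref{b1}. Otherwise the smallest frequency variable remains free, giving \eqref{b2}. I expect to verify this case split carefully by checking the derivative $\partial_{\xi_a}\Omega$ in each configuration (high–high–low–low with the two lows possibly comparable). This is the main obstacle.
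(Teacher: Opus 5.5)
Your plans for (a)--(c) follow the paper's route: duality to $J$, then Cauchy--Schwarz in the $\tau$-variables and, as a separate step, in the $\xi$-variables for (a); three $L^6_{x,t}$ factors and one $L^2$ factor for (b); a change of variables with Jacobian $\sim 2^{2k_{\max}}$ for (c). In (a), the two functions giving the $j$-gain need not be the ones giving the $k$-gain. The genuine gap is in (d).

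\emph{Why the proposed argument for (d) fails.} Suppose the two high-frequency factors are placed in $L^\infty_xL^2_t$. Then the other two must lie jointly in $L^1_xL^\infty_t$, which essentially forces the maximal norm $L^2_xL^\infty_t$ on both. Lemma~\ref{estimate} gives that norm only on unit time intervals, while the functions in this lemma, in particular the dual one, have no time localization. Even locally it costs $2^{3(k_{\rm thd}+k_{\min})/4}$. Neither $L^6_{x,t}$ nor $L^4_xL^\infty_t$ factors balance the H\"older exponents. The power $2^{k_{\max}/6-k_{\min}/6}$ you ask of the remaining two factors is reverse-engineered from the target, not produced by any admissible combination.

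More fundamentally, no argument of this kind can work. Once each $f_{k_i,j_i}$ is written as a superposition of modulated free waves $e^{it\lambda}W(t)\phi$ at cost $2^{j_i/2}$, any H\"older combination of linear or bilinear space-time estimates controls only $\int\prod_i|u_i|\,dx\,dt$. It cannot see the phases $e^{it\lambda_i}$, hence cannot see the resonance. That quantity is too large. Take three frequencies $\sim 2^k$ and one $\sim 1$. Choose the high data to be $L^2$-normalized bumps of width $2^{k/2}$ centred at $2^k$. Then $|u_i|\gtrsim 2^{k/4}$ on the tube $\{|x-3\cdot 2^{2k}t|\lesssim 2^{-k/2},\ |t|\lesssim 2^{-2k}\}$. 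This tube lies where the low-frequency wave is $\gtrsim 1$. Hence $\int\prod|u_i|\gtrsim 2^{3k/4}\cdot 2^{-5k/2}=2^{-7k/4}\gg 2^{-11k/6}$. The bilinear fallback suffers the same obstruction, since $\|u_au_b\|_{L^2}=\||u_a||u_b|\|_{L^2}$.

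\emph{What is missing: the resonance relation.} $J$ vanishes unless $2^{j_{\max}}\gtrsim|\Omega|$, where $\Omega=3(\xi_1+\xi_2)(\xi_1+\xi_3)(\xi_2+\xi_3)$. The proof of (d) consists of spending the factor $2^{j_{\max}/2}$ on the right-hand side against $|\Omega|^{1/2}$. With $k_1\le k_2\le k_3\le k_4$, the paper splits into three cases.
\begin{itemize}
\item If $k_2\ge k_4-10$ (three high frequencies), then $|\Omega|\sim 2^{3k_{\max}}$, so $j_{\max}\ge 3k_{\max}-5$. Apply \eqref{c1} with $i$ the index of $j_{\max}$; this gives $2^{-3k_{\max}/2}2^{-(\sum k)/6}2^{k_i/6}\lesssim 2^{-11k_{\max}/6-k_{\min}/6}$.
\item If $k_2\le k_4-10$ and $\xi_1\xi_2>0$, then $|\Omega|\sim 2^{2k_{\max}+k_2}$, and \eqref{b1}--\eqref{b2} give $2^{-2k_{\max}}$.
\item If $\xi_1\xi_2<0$, localize $|\xi_1+\xi_2|\sim 2^l$ with $l\le j_{\max}-2k_{\max}+20$. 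Use Cauchy--Schwarz with the Jacobian $\sim 2^{2k_{\max}}$ in the variables $(\xi_1+\xi_2,\xi_3)$ to gain $2^{l/2}2^{-k_{\max}}$. Summing in $l$ gives $2^{j_{\max}/2}2^{-2k_{\max}}$. When $j_{\max}$ sits on a low-frequency factor, use the analogous localization of $|\xi_3+\xi_4|$.
\end{itemize}
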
	
\begin{proof}
By duality, we have
\begin{align*}
E = \sup_{\| f \|_{L^2} = 1} \left| \int_{D_{k_4,j_4}} f \cdot (f_{k_1,j_1} \ast f_{k_2,j_2} \ast f_{k_3,j_3}) \, d\xi d\tau \right|.
\end{align*}
Let $f_{k_4,j_4} = \mathbf{1}_{D_{k_4,j_4}} \cdot f$, so $\| f_{k_4,j_4} \|_{L^2} \leq 1$. Define $f_{k_i,j_i}^{\#}(\xi, \tau) = f_{k_i,j_i}(\xi, \tau+\omega(\xi))$ for $i=1,2,3,4$, then these functions are supported in $I_{k_i} \times \bigcup_{|m| \leq 3} \tilde{I}_{j_i+m}$, and $\| f_{k_i,j_i}^{\#} \|_{L^2} = \| f_{k_i,j_i} \|_{L^2}$. A change of variables gives
\begin{align*}
\int_{D_{k_4,j_4}} f \cdot (f_{k_1,j_1} \ast f_{k_2,j_2} \ast f_{k_3,j_3}) \, d\xi d\tau = J(f_{k_1,j_1}^{\#}, f_{k_2,j_2}^{\#}, f_{k_3,j_3}^{\#}, f_{k_4,j_4}^{\#}).
\end{align*}
For simplicity, we omit the $\#$-superscript and only estimate $J = |J(f_{k_1,j_1}, f_{k_2,j_2}, f_{k_3,j_3}, f_{k_4,j_4})|$, with $f_{k_i,j_i}$ now supported in $I_{k_i} \times \tilde{I}_{j_i}$. The support of $f_{k_i,j_i}$ implies $J \equiv 0$ unless
\begin{align}\label{k}
|k_{\text{max}} - k_{\text{sub}}| \leq 5.
\end{align}
By the odd symmetry of $\omega(\xi) = -\xi^3$, we have the symmetry
\begin{align}\label{fghu}
|J(f,g,h,u)| = |J(g,f,h,u)| = |J(f,h,g,u)| = |J(\tilde{f},\tilde{g},u,h)|,
\end{align}
where $\tilde{f}(\xi,\tau) = f(-\xi,-\tau)$. Thus, for the sake of convenience, we may without loss of generality assume $k_1 \leq k_2 \leq k_3 \leq k_4$.

{\rm(a)} By the Cauchy-Schwarz inequality and support properties of $f_{k_i,j_i}$, we obtain
\begin{align*}
		J&\les 2^{\frac{j_{\rm{min}}+j_{\rm{thd}}}{2}}\int_{\R^3} \prod_{i=1}^{3}\| f_{k_i,j_i}(\xi_i,\tau_i) \|_{L_{\tau_i}^2}\cdot\|f_{k_4,j_4}(\xi_1+\xi_2+\xi_3,\tau) \|_{L_\tau^2}d\xi_1d\xi_2d\xi_3 \\
&\les 2^{\frac{j_{\rm{min}}+j_{\rm{thd}}}{2}}2^{\frac{k_{\rm{min}}+k_{\rm{thd}}}{2}}\prod_{i=1}^{3}\| f_{k_i,j_i} \|_{L^2},
	\end{align*}
where we use $\| f_{k_4,j_4} \|_{L^2} \leq 1$ and the fact that integrals over frequency intervals $I_{k_i}$ contribute factors of $2^{k_i/2}$.

{\rm (b)} Let $	f_{k_i,j_i}^{*}(\xi, \tau )=f_{k_i,j_i}(\xi, \tau -\omega(\xi))$, $i=1,2,3,4$. Plancherel's theorem  and the H\"older inequality imply that
	\begin{align*}
		J
		\les\| f^{*}_{k_1,j_1} \ast f^{*}_{k_2,j_2} \ast f^{*}_{k_3,j_3}\|_{L_{\xi,\tau}^2}\| f^{*}_{k_4,j_4} \|_{L_{\xi,\tau}^2}
		\les \prod_{i=1}^{3}\| \FF(f^{*}_{k_i,j_i})\|_{L_{x,t}^6}\|  f_{k_4,j_4} \|_{L_{\xi,\tau}^2}.
	\end{align*}
	By Minkowski's inequality and \eqref{L6}, we know that
	\begin{align*}
		\| \FF(f^{*}_{k_i,j_i})\|_{L_{x,t}^6}\les \int_{\R}\Big\|\int_{\R} f_{k_i,j_i}(\xi,\tau)\cdot e^{ix\xi} e^{-it\xi^3}d\xi\Big\|_{L_{x,t}^6}d\tau\les 2^{\frac{j_i}{2}}2^{-\frac{k_i}{6}} \| f_{k_i,j_i} \|_{L^2}.
	\end{align*}
Combining with \eqref{fghu}, we have the desired result \eqref{c1}.

{\rm (c)}   We assume $k_1 \leq k_2 \leq k_3 \leq k_4$, and analyze two cases based on whether $j_2 = j_{\rm{max}}$.

\textit{Case 1: } $j_2 \neq j_{\text{max}}$.
	If $j_4 = j_{\text{max}}$, according to \eqref{fghu} and Cauchy-Schwarz inequality, it suffices to prove that if $g_{i}:\R\to \R_{+} $ are $L^2$ functions supported in $I_{k_i}$ for $i=1,2,3$, and $g:\R^2\to \R_{+} $ is an $L^2$ function supported in $I_{k_4}\times\tilde{I}_{j_4}$, then
	\begin{align}\label{bb2}
		&\int_{\R^3}g_1(\xi_1)g_2(\xi_2)g_3(\xi_3)g(\xi_1+\xi_2+\xi_3,\Omega(\xi_1,\xi_2,\xi_3))d\xi_1d\xi_2d\xi_3\no \\\les &\ 2^{-k_{\text{max}}}2^{\frac{k_{\text{min}}}{2}}\prod_{i=1,2,3}\| g_i \|_{L^2(\R)}\| g \|_{L^2(\R^2)}.
	\end{align}
Indeed, by $k_2 \leq k_4 - 10$ and \eqref{k}, in the region $\{ |\xi_1| \sim 2^{k_1}, |\xi_2| \sim 2^{k_2},|\xi_3| \sim 2^{k_3}\}$, we have $|\xi_2-\xi_3|\sim |\xi_3|$ and $|\xi_2+\xi_3|\sim |\xi_3|$. Thus, we first apply the Cauchy-Schwarz inequality to the integrals in $\xi_2$ and $\xi_3$, and finally to the integral in $\xi_1$, which yields
	\begin{align}\label{case1a}
&\text{LHS}\eqref{bb2}\notag\\
\leq &\int_{|\xi_1|\sim 2^{k_1}}g_1(\xi_1)\|  g_2(\xi_2) \|_{L_{\xi_2}^2}\|\|  g_3(\xi_3) \|_{L_{\xi_3}^2}\| g(\xi_1+\xi_2+\xi_3,\Omega(\xi_1,\xi_2,\xi_3)) \|_{L_{\xi_2,\xi_3}^2}d\xi_1\no\\
\les &2^{\frac{k_{1}}{2}}\prod_{i=1,2,3}\| g_i \|_{L^2(\R)}\| g(\xi_1+\xi_2+\xi_3,\Omega(\xi_1,\xi_2,\xi_3)) \|_{L_{\xi_2,\xi_3}^2}\les \text{RHS}\eqref{bb2},
	\end{align}
where we use the change of variables $\xi=\xi_1+\xi_2+\xi_3$, $\mu=\Omega(\xi_1,\xi_2,\xi_3)$, and the Jacobian determinant
	\begin{align*}
		\Big |\frac{\partial(\xi, \mu)}{\partial(\xi_2,\xi_3)}\Big |=3|\xi_2-\xi_3||\xi_2+\xi_3|\sim 2^{2k_3}.
	\end{align*}
		
If $j_3=j_{\text{max}}$, then in view of $|k_3-k_4|\leq 5$ and the symmetry \eqref{fghu}, this case is identical to the one with $j_4 = j_{\text{max}}$.

If  $j_1=j_{\text{max}}$, similarly, it suffices to prove that if $g_{i}:\R\to \R_{+} $ are $L^2$ functions supported in $I_{k_i}$ for $i=2,3,4$, and $g:\R^2\to \R_{+} $ is an $L^2$ function supported in $I_{k_1}\times\tilde{I}_{j_1}$, then 
		\begin{align}\label{bb3}
			&\int_{\R^3}g_2(\xi_2)g_3(\xi_3)g_4(\xi_4)g(\xi_2+\xi_3+\xi_4,\Omega(\xi_2,\xi_3,\xi_4))d\xi_2d\xi_3d\xi_4\no \\\les &\ 2^{-k_{\text{max}}}2^{\frac{k_{\text{min}}}{2}}\prod_{i=2,3,4}\| g_i \|_{L^2(\R)}\| g \|_{L^2(\R^2)}.
		\end{align}
To be precise, let $\xi'_4=\xi_2+\xi_3+\xi_4$. Since $k_2\leq k_4-10$, in the integration region $\{|\xi'_4|\sim 2^{k_1},|\xi_2|\sim 2^{k_2},|\xi_3|\sim 2^{k_3}\}$, we then have
		\begin{align*}
			\Big|\frac{\partial \Omega(\xi_2,\xi_3,\xi'_4-\xi_2-\xi_3)}{\partial \xi_2}\Big|			=3|\xi_2-\xi_4||\xi_2+\xi_4|\sim2^{2k_4}.
		\end{align*}
		By using the Cauchy-Schwarz inequality, the left-hand side of \eqref{bb3} is bounded by
	\begin{align}\label{case1b}
&\int_{\R^2}g_3(\xi_3)\| g_2(\xi_2)g_4(\xi'_4-\xi_2-\xi_3)\|_{L_{\xi_2}^2}\|g(\xi'_4,\Omega(\xi_2,\xi_3,\xi'_4-\xi_2-\xi_3)) \|_{L_{\xi_2}^2}d\xi_3 d\xi'_4 \no\\
\les &\ 2^{-k_{\text{max}}}\int_{\R^2}g_3(\xi_3)\| g_2(\xi_2)g_4(\xi'_4-\xi_2-\xi_3)\|_{L_{\xi_2}^2}\|g(\xi'_4,\cdot) \|_{L_{(\cdot)}^2}d\xi_3 d\xi'_4 \no\\
\les &\ 2^{-k_{\text{max}}}\int_{\R}\| g_3(\xi_3) \|_{L_{\xi_3}^2}\| g_2(\xi_2)g_4(\xi'_4-\xi_2-\xi_3)\|_{L_{\xi_2,\xi_3}^2}\|g(\xi'_4,\cdot) \|_{L_{(\cdot)}^2} d\xi'_4 \no\\
\les&\ 2^{-k_{\text{max}}}2^{\frac{k_{\text{min}}}{2}}\| g_2 \|_{L^2(\R)}\| g_3\|_{L^2(\R)}\| g_4 \|_{L^2(\R)}\| g \|_{L^2(\R^2)}.
	\end{align}

\textit{Case 2: } $j_2 = j_{\text{max}}$. By \eqref{fghu} and Cauchy-Schwarz inequality, it suffices to show that if $g_{i}:\R\to \R_{+} $ are $L^2$ functions supported in $I_{k_i}$ for $i=1,3,4$, and $g:\R^2\to \R_{+} $ is an $L^2$ function supported in $I_{k_2}\times\tilde{I}_{j_2}$, then
	\begin{align}\label{bb1}
		&\int_{\R^3}g_1(\xi_1)g_3(\xi_3)g_4(\xi_4)g(\xi_1+\xi_3+\xi_4,\Omega(\xi_1,\xi_3,\xi_4))d\xi_1d\xi_3d\xi_4\no \\\les &2^{-k_{\text{max}}}2^{\frac{k_{\text{thd}}}{2}}\prod_{i=1,3,4}\| g_i \|_{L^2(\R)}\| g \|_{L^2(\R^2)}.
	\end{align}
If we adopt a method similar to that in \eqref{case1a}, we find that the Jacobian determinant 
\begin{align*}
		\Big |\frac{\partial(\xi_1+\xi_3+\xi_4, \Omega(\xi_1,\xi_3,\xi_4)
		}{\partial(\xi_3,\xi_4)}\Big |=3|\xi_3-\xi_4||\xi_3+\xi_4|
	\end{align*}
might be close to zero, making the left-hand side of \eqref{bb1} uncontrollable. Thus, we instead use a method analogous to that in \eqref{case1b}, where replacing $k_{\text{min}}$ with $k_{\text{thd}}$ is sufficient to obtain \eqref{bb1}.

\item[\rm (d)] Firstly, we consider the case $k_2\geq k_4-10$. It means $k_2\approx k_3\approx k_4$ from \eqref{k}. In this case we have  $|\xi_1+\xi_2|\sim2^{k_4}$, $|\xi_1+\xi_3|\sim2^{k_4}$ and  $|\xi_2+\xi_3|=|\xi-\xi_1|\sim2^{k_4}$. Then $|\Omega(\xi_1,\xi_2,\xi_3)|=3|\xi_1+\xi_2||\xi_1+\xi_3||\xi_2+\xi_3|\sim 2^{3k_4}$, which yields $j_{\text{max}}\geq 3k_4-5$.
For $j_1=j_{\text{max}}$, by \eqref{c1}, we have
\begin{align*}
	J\les2^{\frac{j_1+j_2+j_3+j_4}{2}}2^{-\frac{k_1+k_2+k_3+k_4}{6}}2^{-\frac{3k_4}{2}+\frac{k_1}{6}}\prod_{i=1}^{3}\| f_{k_i,j_i} \|_{L^2}\les2^{\frac{j_1+j_2+j_3+j_4}{2}} 2^{-2k_{\text{max}}}\prod_{i=1}^{3}\| f_{k_i,j_i} \|_{L^2}.
\end{align*}
For $j_1\neq j_{\text{max}}$, by \eqref{c1} and \eqref{k}, for some $i\in \{2,3,4\}$ such that $j_i=j_{\text{max}}$, we have
\begin{align*}
	J\les&2^{\frac{j_1+j_2+j_3+j_4}{2}} 2^{-\frac{k_1+k_2+k_3+k_4}{6}}2^{-\frac{3k_4}{2}+\frac{k_i}{6}}\prod_{i=1}^{3}\| f_{k_i,j_i} \|_{L^2}\\\les& 2^{\frac{j_1+j_2+j_3+j_4}{2}} 2^{-\frac{11k_{\text{max}}}{6}}2^{-\frac{k_1}{6}}\prod_{i=1}^{3}\| f_{k_i,j_i} \|_{L^2}.
\end{align*}
Secondly, we consider the case $k_2\leq k_4-10$ and $\xi_1\cdot\xi_2>0$. In this case we have $|\xi_1+\xi_2|\sim2^{k_2}$, $|\xi_1+\xi_3|\sim2^{k_3}$ and  $|\xi_2+\xi_3|\sim2^{k_3}$. Then  $|\Omega(\xi_1,\xi_2,\xi_3)|=3|\xi_1+\xi_2||\xi_1+\xi_3||\xi_2+\xi_3|\sim 2^{2k_3+k_2}$, which implies $j_{\text{max}}\geq 2k_3+k_2-5$. Combining with \eqref{b1} and \eqref{b2}, we have
	\begin{align*}
		J\les2^{\frac{j_1+j_2+j_3+j_4}{2}}2^{-\frac{2k_3+k_2}{2}}2^{\frac{k_2}{2}} 2^{-k_3}\prod_{i=1}^{3}\| f_{k_i,j_i} \|_{L^2}= 2^{\frac{j_1+j_2+j_3+j_4}{2}} 2^{-2k_{\text{max}}}\prod_{i=1}^{3}\| f_{k_i,j_i} \|_{L^2}.
	\end{align*}
Finally, we consider the case $k_2\leq k_4-10$ and $\xi_1\cdot\xi_2<0$. In this case, $|\xi_1+\xi_2|$ may fall near 0, and we need to perform a homogeneous dyadic decomposition on it.
If $j_4=j_{\text{max}}$, it suffices to prove that if $g_{i}:\R\to \R_{+} $ are $L^2$ functions supported in $I_{k_i}$ for $i=1,2,3$, and $g:\R^2\to \R_{+} $ is $L^2$ function supported in $I_{k_4}\times\tilde{I}_{j_4}$, then we have
	\begin{align}\label{bb4}
		&\int_{\R^3}g_1(\xi_1)g_2(\xi_2)g_3(\xi_3)g(\xi_1+\xi_2+\xi_3,\Omega(\xi_1,\xi_2,\xi_3))d\xi_1d\xi_2d\xi_3\no \\\les &2^{\frac{j_4}{2}}2^{-2k_{\text{max}}}\| g_1 \|_{L^2}\| g_2\|_{L^2}\| g_3 \|_{L^2}\| g \|_{L^2(\R^2)}.
	\end{align}     
	By localizing $|\xi_1+\xi_2|\sim2^l$ for $l\in \Z $,  and using $k_1\leq k_4-15$ along with \eqref{k}, we obtain $|\xi_1+\xi_3|\sim 2^{k_3}$ and $|\xi_2+\xi_3|\sim 2^{k_3}$. Consequently,  $|\Omega(\xi_1,\xi_2,\xi_3)|=3|\xi_1+\xi_2||\xi_1+\xi_3||\xi_2+\xi_3|\sim 2^{l+2k_3}$, which implies
	$l\leq j_4-2k_3+20$. Let $\xi'_1=\xi_1+\xi_2$, then in the integration region $\{|\xi'_1|\sim2^l,|\xi_2|\sim2^{k_2},|\xi_3|\sim2^{k_3}\}$, we have
\begin{align*}
	\Big|\frac{\partial(\xi_1'+\xi_3,\Omega(\xi'_1-\xi_2,\xi_2,\xi_3)) }{\partial (\xi_1',\xi_3)}\Big|=3|\xi_1+\xi_3||\xi_1-\xi_3|\sim 2^{2k_3}.
\end{align*}
 Using Cauchy-Schwarz inequality, the left-hand side of \eqref{bb4} is bounded by
	\begin{align*}			
    &\sum_{l\leq j_4-2k_3+20}\int_{\R^3}\chi_l(\xi_1+\xi_2)g_1(\xi_1)g_2(\xi_2)g_3(\xi_3)g(\xi_1+\xi_2+\xi_3,\Omega(\xi_1,\xi_2,\xi_3))d\xi_1d\xi_2d\xi_3\\
    \les &\sum_{l\leq j_4-2k_3+20}\int_{\R^3}\chi_l(\xi'_1) g_1(\xi'_1-\xi_2) g_2(\xi_2)g_3(\xi_3) g(\xi_1'+\xi_3,\Omega(\xi'_1-\xi_2,\xi_2,\xi_3))d\xi'_1d\xi_2d\xi_3\\\les &\sum_{l\leq j_4-2k_3+20}\int_{|\xi'_1|\sim2^l}\chi_l(\xi'_1)\cdot\|g_1(\xi_1'-\xi_2)\|_{L_{\xi_2}^2}\cdot \|g_3(\xi_3)\|_{L_{\xi_3}^2}\\&\qquad\qquad\quad\quad\cdot\|g_2(\xi_2)g(\xi_1'+\xi_3,\Omega(\xi'_1-\xi_2,\xi_2,\xi_3))\|_{L_{\xi_2,\xi_3}^2}d\xi'_1\\\les&\sum_{l\leq j_4-2k_3+20}2^{\frac{l}{2}}\|g_1\|_{L^2} \|g_3\|_{L^2}\|g_2\|_{L^2}\| g(\xi_1'+\xi_3,\Omega(\xi'_1-\xi_2,\xi_2,\xi_3))\|_{L_{\xi_3,\xi_1'}^2}
		\\\les&2^{\frac{j_4}{2}} 2^{-2k_{\text{max}}}\|  g_1 \|_{L^2} \| g_3\|_{L^2}\|g_2\|_{L^2} \|g \|_{L^2(\R^2)}.
	\end{align*}
	Since \eqref{k} and \eqref{fghu} hold,  the case $j_3=j_{\text{max}}$ is identical to the case $j_4=j_{\text{max}}$,  and the case $j_1=j_{\text{max}}$ is identical to the case $j_2=j_{\text{max}}$. Therefore, in the following, we only need to consider the case  $j_2=j_{\text{max}}$. In this case, it suffices to prove that if $g_{i}:\R\to \R_{+} $ are $L^2$ functions supported in $I_{k_i}$ for $i=1,3,4$, and $g:\R^2\to \R_{+} $ is $L^2$ function supported in $I_{k_2}\times\tilde{I}_{j_2}$, then we have
	\begin{align}\label{bb5}		&\int_{\R^3}g_1(\xi_1)g_3(\xi_3)g_4(\xi_4)g(\xi_1+\xi_3+\xi_4,\Omega(\xi_1,\xi_3,\xi_4))d\xi_1d\xi_3d\xi_4\no \\\les &2^{\frac{j_2}{2}}2^{-2k_{\text{max}}}\| g_1 \|_{L^2}\| g_3\|_{L^2}\| g_4 \|_{L^2}\| g \|_{L^2(\R^2)},
	\end{align}
    where we used the symmetry \eqref{fghu}. Similar to the previous case, where we needed to localize $|\xi_1+\xi_2| = |\xi_4-\xi_3|$, here we need to localize $|\xi_4+\xi_3|\sim2^l$, $l\in \Z$. Now we have $|\Omega(\xi_1,\xi_3,\xi_4)|=3|\xi_1+\xi_3||\xi_1+\xi_4||\xi_3+\xi_4|\sim 2^{l+2k_3}$, thus $l\leq j_2-2k_3+20$.
	Let $\xi'_3=\xi_3+\xi_4$, in the  integration region $\{|\xi_1|\sim2^{k_1},|\xi'_3|\sim2^{l},|\xi_4|\sim2^{k_4}\}$, we have
	\begin{align*}
		\Big|\frac{\partial(\xi_1+\xi_3',\Omega(\xi_1,\xi'_3-\xi_4,\xi_4)) }{\partial (\xi'_3,\xi_1)}\Big|=3|\xi_1-\xi_3||\xi_1+\xi_3|\sim 2^{2k_3}.
	\end{align*}
	 Applying the Cauchy-Schwarz inequality, the left-hand side of \eqref{bb5} is bounded by
	\begin{align*}		
		&\sum_{l\leq j_2-2k_3+20}\int_{\R^3}\chi_l(\xi'_3)g_1(\xi_1)g_3(\xi'_3-\xi_4)g_4(\xi_4)g(\xi_1+\xi_3',\Omega(\xi_1,\xi'_3-\xi_4,\xi_4))d\xi_1d\xi'_3d\xi_4\\\les&\sum_{l\leq j_2-2k_3+20}2^{\frac{l}{2}}\|g_1(\xi_1)\|_{L_{\xi_1}^2}\|g_3(\xi_3'-\xi_4)\|_{L_{\xi_4}^2}\|g_4(\xi_4)\|_{L_{\xi_4}^2} \|g(\xi_1+\xi_3',\Omega(\xi_1,\xi'_3-\xi_4,\xi_4))\|_{L_{\xi'_3,\xi_1}^2}
		\\\les&2^{\frac{j_2}{2}} 2^{-2k_{\text{max}}}\|  g_1 \|_{L^2} \| g_3\|_{L^2}\|g_4\|_{L^2} \|g \|_{L^2(\R^2)}.
	\end{align*}
	 Thus, we have completed the proof.
\end{proof}

\section{Local well-posedness}

In order to establish the local well‑posedness of the Cauchy problem \eqref{eq0}, we shall apply a contraction mapping argument to the truncated integral equation given below
\begin{align*}
	u(x,t)=\psi(t)W(t)u_0+\psi(t)\int_{0}^{t}W(t-t')(F(u,\bar{u}))(t')dt',
\end{align*}
where $W(t) := \mathcal{F}^{-1} e^{it \omega(\xi)}\mathcal{F}$, $\omega(\xi)=-\xi^3$, $\psi\in C_0^{\infty}(\R)$, $\operatorname{supp}\psi\subset[-2,2]$, $\psi\equiv 1$ on $[-1,1]$, and $$F(u,\bar{u})=\mathcal N_3(u,\bar u) +\mathcal N_5(u,\bar u).$$
In fact, by adopting the method developed in \cite{IonescuKenig07}, we readily derive the following linear estimates:
\begin{align}
	\|\psi(t)W(t)u_0\|_{W^s}&\les\|u_0\|_{H^s}, \quad s\geq 0;\label{linear1}\\
	\Big\|\psi(t)\int_{0}^{t}W(t-t')F(t')dt'\Big\|_{W^s}&\les \|F\|_{V^s}, \quad s\geq 0.\label{linear2}
\end{align}
Therefore, we need to obtain the trilinear and quintilinear estimates as follows
\begin{align*}
	 \|F(u,\bar{u})\|_{V^s}\lesssim \|u\|_{W^s}^K, \quad K=3\  \text{or}\ 5.
\end{align*}
To illustrate the main difficulty in the trilinear estimates, we write some representative cubic terms as 
\begin{equation}\label{cubic-expansion}
\mathcal N_3(u,\bar u) =c_1|u|^2u_{xx}+c_2u^2\bar{u}_{xx}+c_3\bar{u}\,u_x^2+c_4u\,u_x\bar{u}_x.
\end{equation}
The first two summands in \eqref{cubic-expansion} place both derivatives
on a single factor, whereas the remaining summands distribute the two
derivatives between two different factors. The former case, in which
both derivatives are concentrated on a single factor, is the most
delicate one. Nevertheless, since the resonance relation $\tau=-\xi^3$ is invariant under the transformation $(\tau,\xi)\mapsto(-\tau,-\xi)$, complex conjugations can be omitted
throughout the proofs of the multilinear estimates. Consequently, for the trilinear estimates, it suffices to consider the representative term $u^2u_{xx}$, while for the quintilinear estimates,
it suffices to consider $u^4u_x$.

\subsection{Trilinear estimates}

We first establish the dyadic trilinear estimates for each possible frequency configuration. 

\begin{prop}{\rm (low$*$low$*$low$\to$low)}\label{prop1}
	For $0 \leqslant k_1,k_2,k_3,k_4\leqslant99$, and $f_{k_i}\in Z_{k_i}$, $i=1,2,3$, we have
	\begin{align}\label{llll}
		2^{2k_1}\| \eta_{k_4}(\xi)(\tau-\omega(\xi)+i)^{-1}f_{k_1}\ast f_{k_2}\ast f_{k_3}\|_{Z_{k_4}}\les \prod_{i=1}^{3}\|f_{k_i} \|_{Z_{k_i}}.
	\end{align}
\end{prop}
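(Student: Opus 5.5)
Since all frequencies are at most $2^{99}$, the factor $2^{2k_1}$ is a harmless constant, and $Z_{k}=X_k$ for every index involved. So \eqref{llll} reduces to a bound in unweighted (up to constants) $X^{0,1/2,1}$-type norms at bounded frequency. I would decompose each $f_{k_i}=\sum_{j_i\ge0} f_{k_i,j_i}$ with $f_{k_i,j_i}$ supported in $D_{k_i,j_i}$, using $\sum_{j}2^{j/2}\|f_{k_i,j}\|_{L^2}\lesssim\|f_{k_i}\|_{X_{k_i}}$; the weight $(1+2^{(j-3k)/2})$ is comparable to a constant for modest $j$ and at worst helps. For $k\le 99$ we have $1+2^{(j-3k)/2}\sim 2^{j/2}$ for large $j$. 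This means the $X_k$ norm at high modulation is like $2^{j}$, so some care is needed on the output side.

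On the output side, I would localize $\eta_{k_4}(\xi)\eta_{j_4}(\tau-\omega(\xi))$. The multiplier $(\tau-\omega(\xi)+i)^{-1}$ contributes $2^{-j_4}$, and the $X_{k_4}$ weight contributes $2^{j_4/2}(1+2^{(j_4-3k_4)/2})\lesssim 2^{j_4}$. Hence the left side is bounded by
\[
\sum_{j_4\ge0}2^{-j_4}\,2^{j_4/2}(1+2^{(j_4-3k_4)/2})\,\big\|\mathbf 1_{D_{k_4,j_4}}\,f_{k_1}\ast f_{k_2}\ast f_{k_3}\big\|_{L^2}.
\]
For $k_4\le 0$, the sets $D_{k_4,j_4}$ measure $\tau$ rather than $\tau-\omega(\xi)$, but $|\omega(\xi)|\lesssim1$ there, so this is equivalent up to constants. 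I would also note that the $\eta_{k_4}$ localization makes the output frequency bounded.

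For each fixed $j_1,j_2,j_3,j_4$ I would apply Lemma \ref{4Zmultiplier}(a) with all $k_i\le 99$:
\[
E\lesssim 2^{(j_{\min}+j_{\rm thd})/2}\prod_{i=1}^3\|f_{k_i,j_i}\|_{L^2}.
\]
Since $j_{\min}+j_{\rm thd}\le j_1+j_2+j_3$ always holds when we keep $j_4$ out (two smallest of four are at most the sum of any three among the inputs, as the terms are nonnegative), this gives $E\lesssim 2^{(j_1+j_2+j_3)/2}\prod\|f_{k_i,j_i}\|_{L^2}$ uniformly in $j_4$. The main obstacle is then the summation in $j_4$, because the prefactor is $\sim 2^{-j_4/2}\cdot(1+2^{(j_4-3k_4)/2})$, which is not summable when $j_4$ is large (it is like $2^0$ there). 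To handle it I would distinguish two cases.

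First, if $j_4\le \max(j_1,j_2,j_3)+C$, then $j_4$ is dominated by an input modulation. I would sum in $j_4$ against a sharper bound: using (a) with $j_{\min}+j_{\rm thd}$ bounded by the two smallest of $\{j_1,j_2,j_3\}$ leaves room $2^{j_{\max}^{in}/2}$, which absorbs the at most $O(j_{\max}^{in})$ terms. More carefully, the weight $2^{-j_4/2}\cdot 2^{j_4/2}\le 2^{j_4/2}\le 2^{j^{in}_{\max}/2}$ times constants, and the sum over $j_4\le j^{in}_{\max}$ of $2^{(j_4-j^{in}_{\max})/2}$ is geometric.

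Second, if $j_4> \max(j_1,j_2,j_3)+C$, then the output modulation is forced by $|\tau_1+\tau_2+\tau_3+\Omega|$ with $|\Omega|\lesssim 2^{3\cdot99}$ bounded. Hence $2^{j_4}\lesssim 2^{\max j_i}+1$, so only $j_4\le \max(j_1,j_2,j_3)+C'$ contributes. This reduces to the first case.

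Combining the two cases and summing over $j_1,j_2,j_3$ yields $\prod\|f_{k_i}\|_{X_{k_i}}$, proving \eqref{llll}.
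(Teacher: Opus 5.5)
Your argument is correct and is essentially the paper's proof. Both bound the output $X_{k_4}$ weight against $(\tau-\omega(\xi)+i)^{-1}$ by an absolute constant, apply Lemma~\ref{4Zmultiplier}(a) with all $k_i\le 99$, and use the boundedness of $\Omega$ to restrict to $j_4\le \max(j_1,j_2,j_3)+C$. Your explicit count of the $O(j^{in}_{\max})$ admissible values of $j_4$ against the spare factor $2^{-j^{in}_{\max}/2}$ spells out what the paper compresses into ``$|j_{\max}-j_{\rm sub}|\le 5$''. At these frequencies that relation in fact only holds in the form $j_{\max}\le j_{\rm sub}+C$, with $C$ a fixed constant determined by $|\Omega|\lesssim 2^{3\cdot 99}$, which is exactly the form you use.
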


\begin{proof} Let $f_{k_i,j_i}=f_{k_i}(\xi,\tau)\eta_{j_i}(\tau-\omega(\xi))$, $j_i\geq0$, $i=1,2,3$. By the $X_k$ norm, H\"older's inequality, and \eqref{a1}, we obtain
	\begin{align*}
		\text{LHS}\eqref{llll}&\lesssim \sum_{j_1,j_2,j_3,j_4\geq 0} 2^{\frac{j_{\rm{min}} + j_{\rm{thd}}}{2}} 2^{-\frac{j_4}{2}}(1+2^{\frac{j_4-3k_4}{2}})\prod_{i=1}^3 \| f_{k_i,j_i} \|_{L^2}\les \prod_{i=1}^{3}\|f_{k_i} \|_{Z_{k_i}},
	\end{align*}
where we used $|j_{\text{max}} - j_{\text{sub}}| \leq 5$ (which follows from the support properties). The proposition is proved.
\end{proof}

By symmetry, in the following discussion we may assume $k_2\geq k_3$. Now we consider the high$*$low$*$low$\to$high case, and further divide it into two subcases according to whether the two low frequencies are well separated, which correspond to the following proposition.

\begin{prop}{\rm (high$*$low$*$low$\to$high)}\label{hllh}
	Let $ k_1\geq100$, $|k_1-k_4|\leq5$, and $0\leq k_2,k_3\leq k_1-15$. Assume $f_{k_i} \in Z_{k_i}$ and that $\FF(f_{k_i})$ is supported in $\mathbb{R} \times I$ with $|I| \lesssim 1$ for $i = 1, 2, 3$.

\item[\rm (a)] If $k_3\leq k_2\leq k_3+5$, then
	\begin{align}\label{a3}
		2^{2k_1}\| \eta_{k_4}(\xi)(\tau-\omega(\xi)+i)^{-1}f_{k_1}\ast f_{k_2}\ast f_{k_3}\|_{Z_{k_4}}\les  2^{\frac{3k_2+3k_3}{4}}\prod_{i=1}^{3}\|f_{k_i} \|_{Z_{k_i}}.
	\end{align}
\item[\rm (b)] If $k_3\leq k_2-6$, then we have the improved estimate
\begin{align}\label{a4}
		2^{2k_1}\| \eta_{k_4}(\xi)(\tau-\omega(\xi)+i)^{-1}f_{k_1}\ast f_{k_2}\ast f_{k_3}\|_{Z_{k_4}}\les  2^{\frac{k_2+k_3}{4}}\prod_{i=1}^{3}\|f_{k_i} \|_{Z_{k_i}}.
	\end{align}
\end{prop}

\begin{proof} (a) We split the left-hand side of \eqref{a3} into three parts
	\begin{align*}
		\text{LHS}\eqref{a3}&\leq2^{2k_1}\| \eta_{k_4}(\xi)\eta_{\leq 2k_4-1}(\tau-\omega(\xi))(\tau-\omega(\xi)+i)^{-1}f_{k_1}\ast f_{k_2}\ast f_{k_3}\|_{Z_{k_4}}\\&\quad+2^{2k_1}\| \eta_{k_4}(\xi)\eta_{[2k_4,3k_4]}(\tau-\omega(\xi))(\tau-\omega(\xi)+i)^{-1}f_{k_1}\ast f_{k_2}\ast f_{k_3}\|_{Z_{k_4}}\\&\quad+2^{2k_1}\| \eta_{k_4}(\xi)\eta_{\geq 3k_4+1}(\tau-\omega(\xi))(\tau-\omega(\xi)+i)^{-1}f_{k_1}\ast f_{k_2}\ast f_{k_3}\|_{Z_{k_4}}\\&:=I+II+III.
	\end{align*}
	By the $Y_k$ norm, the H\"older's inequality, Lemma \ref{aaa} (a),(c), \eqref{Lt2}, and \eqref{Lx2}, we obtain
	\begin{align*}
		I&\les2^{2k_1}2^{-k_4}\|\FF(f_{k_1}\ast f_{k_2}\ast f_{k_3})\|_{L_x^1L_t^2}\\&\les2^{k_1}\|\FF     	(f_{k_1}) \|_{L_x^{\infty}L_t^2}\|\FF		(f_{k_2}) \|_{L_x^2L_{t\in I}^{\infty}}\|\FF		(f_{k_3}) \|_{L_x^2L_{t\in I}^{\infty}}
		\\&\les 2^{k_1}2^{-k_1}2^{\frac{3k_2+3k_3}{4}}\prod_{i=1}^{3}\|f_{k_i} \|_{Z_{k_i}}=2^{\frac{3k_2+3k_3}{4}} \prod_{i=1}^{3}\|f_{k_i} \|_{Z_{k_i}}.
	\end{align*}
For $II$, we use the	$X_k$ norm. For $j_4$ in the range $[2k_4,3k_4]$ the extra factor $(1+2^{(j_4-3k_4)/2})$ satisfies $1\le 1+2^{(j_4-3k_4)/2}\le 2$. By H\"older's inequality, and \eqref{Lt2}-\eqref{Lx4}, we get that
	\begin{align*}
		II&\les \sum_{2k_4\leq j_4\leq  3k_4}2^{2k_1}2^{-\frac{j_4}{2}}\|\FF(f_{k_1})\|_{L_x^{\infty}L_t^2}\|\FF(f_{k_2})\|_{L_x^{4}L_t^{\infty}}\|\FF(f_{k_3})\|_{L_x^{4}L_t^{\infty}}
		\\&\les 2^{2k_1}2^{-k_1}2^{-k_1}2^{\frac{k_2+k_3}{4}}\prod_{i=1}^{3}\|f_{k_i}\|_{Z_{k_i}}\les 2^{\frac{k_2+k_3}{4}}\prod_{i=1}^{3}\|f_{k_i}\|_{Z_{k_i}}.
	\end{align*}
	For the high-modulation case $III$, let $f_{k_i,j_i}=f_{k_i}(\xi,\tau)\eta_{j_i}(\tau-\omega(\xi)),$ $j_i\geq0,$ $i=1,2,3$. From the properties of the support, we have $ 1_{D_{k_4, j_4}}(\xi, \tau)(f_{k_1,j_1} \ast f_{k_2,j_2} \ast f_{k_3,j_3}) \equiv 0$ unless
\begin{align}\label{FCC}
2^{j_{\text{max}}}\sim {\max}\{2^{j_{\text{sub}}},|\Omega(\xi_1,\xi_2,\xi_3)|\}.
\end{align}
Since $|\Omega(\xi_1,\xi_2,\xi_3)|\ll 2^{3k_1}$ in the region $\{\xi_i\in \tilde{I}_{k_i}\}$,  we get $|j_{\text{max}}-j_{sub}|\leqslant 5$. Considering the worst case $(j_1,j_4)=(j_{\max},j_{\text{sub}})$ (the other cases are better) and applying Lemma \ref{4Zmultiplier} (a) together with Lemma \ref{aaa} (b), we have
	\begin{align}\label{worst0}
		III		&\les\sum_{\substack{j_1,j_4\geq 3k_4-5\\0\leq  j_2,j_3\leq j_1}}2^{2k_1}2^{-\frac{j_4}{2}}\bigl(1+2^{\frac{j_4-3k_4}{2}}\bigr)2^{\frac{j_2+j_3}{2}}2^{\frac{k_2+k_3}{2}}\prod_{i=1}^{3}\|f_{k_i,j_i} \|_{L^2}\no\\
&\les\sum_{j_1\geq 3k_4-5}2^{\frac {k_1}{2}}2^{\frac{k_2+k_3}{2}}2^{-\frac{j_1}{2}}2^{\frac{j_1}{2}}\|f_{k_1,j_1} \|_{L^2}\prod_{i=2}^{3}\|f_{k_i} \|_{Z_{k_i}}\no\\&\les2^{-k_1}2^{\frac{k_2+k_3}{2}}\prod_{i=1}^{3}\|f_{k_i} \|_{Z_{k_i}}\les \prod_{i=1}^{3}\|f_{k_i} \|_{Z_{k_i}}.
	\end{align}
		Thus, we complete the proof of (a).

(b)	Since the estimates for $II$ and $III$ in (a) already meet our requirements, we only need to re-estimate $I$. Furthermore, we decompose $f_{k_1}$ into high-modulation and low-modulation parts. For $I$, we write
\begin{align*}
		I&\leq2^{2k_1}\| \eta_{k_4}(\xi)\eta_{\leq 2k_4-1}(\tau-\omega(\xi))(\tau-\omega(\xi)+i)^{-1}f_{k_1}^h\ast f_{k_2}\ast f_{k_3}\|_{Z_{k_4}}\\&\quad +2^{2k_1}\| \eta_{k_4}(\xi)\eta_{\leq 2k_4-1}(\tau-\omega(\xi))(\tau-\omega(\xi)+i)^{-1}f_{k_1}^l\ast f_{k_2}\ast f_{k_3}\|_{Z_{k_4}}:=I_1+I_2,
	\end{align*} 
where $f_{k_1}^h=f_{k_1}(\xi,\tau)\eta_{\geq 2k_1+k_2-9}(\tau-\omega(\xi))$, $ f_{k_1}^l=f_{k_1}(\xi,\tau)\eta_{\leq 2k_1+k_2-10}(\tau-\omega(\xi))$.
Using the $Y_k$ norm, H\"older inequality, Lemma \ref{aaa} (a),(c), Plancherel identity, and \eqref{Lx4}, we can obtain
	\begin{align*}
		I_1
		&\les 2^{2k_1}2^{-k_4}\|\FF(f_{k_1}^h\ast f_{k_2}\ast f_{k_3})\|_{L_x^1L_t^2}\\&\les 2^{k_1}\|\FF (f_{k_1}^h)\|_{L_{x,t}^2}\|\FF (f_{k_2})\|_{L_x^4L_t^{\infty}}\|\FF (f_{k_3})\|_{L_x^4L_t^{\infty}}
		\\&\les 2^{k_1}2^{\frac{k_2+k_3}{4}}\|f_{k_1}^h\|_{L_{\xi,\tau}^2}\|f_{k_2}\|_{Z_{k_2}}\|f_{k_3}\|_{Z_{k_3}}.
	\end{align*}
	According to the definition of $X_k$ and $f_{k_1}^h$, we have
	\begin{align*}
		2^{k_1}\|f_{k_1}^h\|_{L_{\xi,\tau}^2}\les &\sum_{j\geq 2k_1+k_2-9} 2^{k_1}\|\eta_j(\tau-\omega(\xi))f_{k_1}(\xi,\tau)\|_{L_{\xi,\tau}^2}\\\les &\sum_{j\geq 2k_1+k_2-9}2^{\frac{j}{2}}\|\eta_j(\tau-\omega(\xi))f_{k_1}(\xi,\tau)\|_{L_{\xi,\tau}^2}\les \|f_{k_1}\|_{X_{k_1}}\les \|f_{k_1}\|_{Z_{k_1}}.
	\end{align*}
	Thus, we obtain $I_1\les \text{RHS}\eqref{a4}$.	For $I_2$, we set $f_{k_i,j_i}=f_{k_i}(\xi,\tau)\eta_{j_i}(\tau-\omega(\xi))$ ($j_i\geq0,$ $i=1,2,3$) and apply the $X_k$ norm
	\begin{align*}
		I_2\leq \sum_{\substack{j_4\leq 2k_4-1,\\j_1\leq 2k_1+k_2-10,\\j_2,j_3\geq 0}} 2^{2k_1}2^{-\frac{j_4}{2}}\| 1_{D_{k_4, j_4}}(\xi, \tau)(f_{k_1,j_1} \ast f_{k_2,j_2} \ast f_{k_3,j_3}) \|_{L_{\xi,\tau}^2}.
	\end{align*}
Because $|\xi_1+\xi_2|\sim2^{k_1}$, $|\xi_1+\xi_3|\sim2^{k_1}$, $|\xi_2+\xi_3|\sim2^{k_2}$, we have $|\Omega(\xi_1,\xi_2,\xi_3)|=3|\xi_1+\xi_2||\xi_1+\xi_3||\xi_2+\xi_3|\sim 2^{2k_1+k_2}$. From \eqref{FCC}, one of the following conditions holds
$$(1)\ j_2\approx j_3\geq 2k_1+k_2;\quad (2)\ j_2\approx 2k_1+k_2\geq j_3; \quad (3)\ j_3\approx 2k_1+k_2\geq j_2.$$	
	If condition (1) holds,  then by the definition of $X_k$, Lemma \ref{4Zmultiplier} (a) and Lemma  \ref{aaa}(b), we obtain 
	\begin{align*}
		I_2&\les \sum_{\substack{j_2\approx j_3\geq 2k_1+k_2,\\j_4\leq 2k_4-1,\\ j_1\leq 2k_1+k_2-10}}2^{2k_1}2^{-\frac{j_4}{2}}2^{\frac{j_1+j_4}{2}}2^{\frac{k_2+k_3}{2}}\prod_{i=1}^{3}\|f_{k_i,j_i}\|_{L^2}\\
&\les \sum_{j_2,j_3\geq 2k_1+k_2}k_12^{2k_1}2^{\frac{k_2+k_3}{2}}2^{\frac{3k_2}{2}-j_2}2^{\frac{3k_3}{2}-j_3}\prod_{i=1}^{3}\|f_{k_i} \|_{Z_{k_i}}\\
&\les k_12^{-2k_1}2^{2k_3}\prod_{i=1}^{3}\|f_{k_i}\|_{Z_{k_i}}\les 2^{\frac{k_2+k_3}{4}}\prod_{i=1}^{3}\|f_{k_i} \|_{Z_{k_i}},
	\end{align*}
where we used the fact that the factor $2^{\frac{j_i}{2}}(1+2^{\frac{j_i-3k_i}{2}})\sim 2^{j_i-\frac{3k_i}{2}}$ for $j_i\geq 3k_i$ ($i=2,3$).
For conditions (2) and (3), we only consider (2), as (3) is analogous. Using Lemma \ref{4Zmultiplier} (c) and Lemma \ref{aaa} (b), we can deduce that
	\begin{align*}
		I_2&\les \sum_{\substack{j_2\approx 2k_1+k_2,\\0\leq j_1,j_3\leq 2k_1+k_2,\\0\leq j_4\leq 2k_4-1}}2^{2k_1}2^{-\frac{j_4}{2}}2^{\frac{j_1+j_2+j_3+j_4}{2}}2^{-\frac{j_2}{2}}2^{-k_1}2^{\frac{k_2}{2}}\prod_{i=1}^{3}\|f_{k_i,j_i}\|_{L^2}\\
&\les \sum_{j_2\approx 2k_1+k_2}k_12^{k_1}2^{\frac{k_2}{2}}2^{\frac{3k_2}{2}-j_2}\prod_{i=1}^{3}\|f_{k_i} \|_{Z_{k_i}}\les 2^{\frac{k_2+k_3}{4}}\prod_{i=1}^{3}\|f_{k_i} \|_{Z_{k_i}}.
	\end{align*}
This completes the proof of \eqref{a4}.
\end{proof}

Now we consider the high$*$(sub-high)$*$low$\to$high case, which is easier to control than the high$*$low$*$low$\to$high case.

\begin{prop}{\rm ( high$*$(sub-high)$*$low$\to$high)}\label{hhlh}
	Let $ k_1\geqslant100$, $|k_1-k_4|\leqslant5$, $k_1-15\leqslant k_2\leqslant k_1$, and $0\leqslant k_3\leqslant k_1-15$. Assume  $f_{k_i}\in Z_{k_i}$ for $i=1,2,3$, we have
	\begin{align}\label{a6}
		2^{2k_1}\| \eta_{k_4}(\xi)(\tau-\omega(\xi)+i)^{-1}f_{k_1}\ast f_{k_2}\ast f_{k_3}\|_{Z_{k_4}}\lesssim k_1\cdot2^{\frac{k_1-k_3}{6}}\prod_{i=1}^{3}\|f_{k_i} \|_{Z_{k_i}}.
	\end{align}
\end{prop}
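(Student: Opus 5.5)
We argue as for Proposition \ref{hllh}: decompose each $f_{k_i}$ dyadically in modulation, $f_{k_i,j_i}=f_{k_i}\eta_{j_i}(\tau-\omega(\xi))$, and take the $X_{k_4}$ part of the $Z_{k_4}$ norm on the output. Then Lemma \ref{aaa}(b) lets us measure each input in the form $2^{j_i/2}\|f_{k_i,j_i}\|_{L^2}\lesssim\|f_{k_i}\|_{Z_{k_i}}$, including the $Y_k$ components. Unlike the high$*$low$*$low case, no $Y_k$ output component should be needed. The reason is that the resonance is now large: with $|\xi_1|\sim|\xi_2|\sim|\xi_4|\sim 2^{k_1}$ and $|\xi_3|\sim 2^{k_3}\ll 2^{k_1}$, write
\[
\Omega=3(\xi_1+\xi_2)(\xi_1+\xi_3)(\xi_2+\xi_3).
\]
Here $|\xi_1+\xi_3|\sim|\xi_2+\xi_3|\sim 2^{k_1}$, and $|\xi_1+\xi_2|=|\xi_4-\xi_3|\sim 2^{k_1}$ because $|\xi_4|\sim 2^{k_1}\gg 2^{k_3}$. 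Hence $|\Omega|\sim 2^{3k_1}$, so $j_{\max}\geq 3k_1-C$ by \eqref{FCC}.

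We then estimate
\[
\text{LHS}\eqref{a6}\lesssim \sum_{j_1,\dots,j_4}2^{2k_1}2^{-j_4/2}\bigl(1+2^{(j_4-3k_4)/2}\bigr)\,E,
\]
with $E$ as in Lemma \ref{4Zmultiplier}. Since $k_{\min}=k_3\leq k_{\max}-15$, part (d) of that lemma applies and gives
\[
E\lesssim 2^{\frac{j_1+j_2+j_3+j_4}{2}}2^{-\frac{11k_1}{6}}2^{-\frac{k_3}{6}}\prod_{i=1}^3\|f_{k_i,j_i}\|_{L^2}.
\]
This leaves the factor $2^{2k_1-11k_1/6-k_3/6}=2^{(k_1-k_3)/6}$, which is exactly the gain in \eqref{a6}. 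For the modulation sums, the factors $2^{j_i/2}$ for $i=1,2,3$ are absorbed into the $Z_{k_i}$ norms, while $2^{j_4/2}$ cancels the output weight $2^{-j_4/2}$.

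The sums over $j_1,j_2,j_3$ are handled as follows. Since $E$ is bounded by $\ell^1$-summable quantities in $j_i$ (by Lemma \ref{aaa}(b), the $Z_k$ norm controls $\sum_j 2^{j/2}\|f_{k,j}\|$ up to the $Y_k$ piece), those sums cost nothing. The sum over $j_4$ is the delicate one. When $j_4\leq 3k_4$, the weight $(1+2^{(j_4-3k_4)/2})$ is $\lesssim 1$, and the at most $O(k_1)$ values of $j_4$ produce the factor $k_1$ allowed in \eqref{a6}. When $j_4>3k_4$, we have $j_4\approx j_{\max}$ or $j_4\approx j_{\rm sub}$, since $|\Omega|\lesssim 2^{3k_1}$ forces $|j_{\max}-j_{\rm sub}|\leq 5$ there. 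The extra weight $2^{(j_4-3k_4)/2}$ must then be paid for by the input carrying the comparable modulation. For this we use (b) instead of (d), choosing the exempt index to be an input $i$ with $j_i\approx j_4$, and we absorb it through $\|f_{k_i,j_i}\|\lesssim 2^{-j_i/2}\|f_{k_i}\|_{Z}$, which is summable.

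The $Y_k$ parts of the inputs are already accounted for, since Lemma \ref{aaa}(b) bounds each $\eta_j$-piece by the full $Z_k$ norm. That argument loses only when summing in $j\leq 2k$, and this is the source of the logarithmic factor $k_1$. I expect the main obstacle to be the bookkeeping in this last case, $j_4\geq 3k_4$, together with the $\ell^1$ summation of $Y_k$ pieces: one has to check that each relevant configuration, $j_{\max}$ at an input or at the output, retains enough decay from Lemma \ref{4Zmultiplier}(b) or (d) that the total loss is at most $k_1 2^{(k_1-k_3)/6}$.
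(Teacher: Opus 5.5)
Your plan works after the repairs listed in the second paragraph. Where the output weight $1+2^{(j_4-3k_4)/2}$ is bounded, it is exactly the paper's argument: Lemma~\ref{4Zmultiplier}(d) leaves $2^{2k_1-\frac{11k_1}{6}-\frac{k_3}{6}}=2^{\frac{k_1-k_3}{6}}$, and $k_1$ is the number of admissible $j_4$.

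The genuine difference is the high output modulation range. The paper cuts at $j_4\ge 3k_1+21$, reduces to $(j_1,j_4)=(j_{\max},j_{\rm sub})$, and uses the crude bound (a):
\begin{itemize}
\item the output factor $2^{2k_1}2^{-j_4/2}(1+2^{(j_4-3k_4)/2})$ is $\sim 2^{2k_1-3k_4/2}$;
\item (a) contributes at most $2^{(k_1+k_3)/2}2^{(j_2+j_3)/2}$;
\item $\|f_{k_1,j_1}\|_{L^2}\lesssim 2^{-j_1+3k_1/2}$ times the $X_{k_1}$-weighted piece.
\end{itemize}
The total is $2^{(k_3-k_1)/2}$, much better than needed. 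You use (b) instead, exempting the input whose modulation is comparable to $j_4$. This gives exactly $2^{(k_1-k_3)/6}$ when that input is $f_{k_1}$ or $f_{k_2}$, and $O(1)$ when it is $f_{k_3}$. So it also closes, only without room to spare. Both routes rely on the $Y_{k_i}$ components vanishing for $j_i>2k_i$, so that at these modulations only the $X_{k_i}$ norms enter.

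Your case analysis needs two repairs.
\begin{enumerate}
\item The relation $|j_{\max}-j_{\rm sub}|\le 5$ holds only once $j_{\max}\ge\log_2|\Omega|+C=3k_1+O(1)$. In the band $3k_4<j_4\le 3k_1+20$ the output may carry the only large modulation (the resonant interaction with $j_1,j_2,j_3$ all small). Then there is no input with $j_i\approx j_4$ to exempt in (b). Put the cutoff at $3k_1+20$ as the paper does; below it the weight is still $O(1)$ and (d) covers the band.
\item Above the cutoff, $j_4$ need not be among the two largest modulations: $j_r\approx j_l\ge j_4$ at two inputs is possible. Exempt $r$ in (b); the leftover factor $2^{(j_4-j_r)/2}$ is geometrically summable over $j_4\le j_r+5$.
\end{enumerate}

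Note also that $\|f_{k_i,j_i}\|_{L^2}\lesssim 2^{-j_i/2}\|f_{k_i}\|_{Z_{k_i}}$ is uniform in $j_i$, not summable. The sum over $j_4\approx j_i$ has to come from the $\ell^1$ structure of $X_{k_i}$, which in fact supplies the extra decay $2^{-(j_i-3k_i)/2}$.

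Finally, you attribute the factor $k_1$ to summing $Y_k$ pieces, but you have already spent it on counting $j_4$. Since Lemma~\ref{aaa}(b) is only uniform in $j$, any loss from $\ell^1$-summing $Y_{k_i}$ pieces would be an additional logarithm per input, not the same $k_1$. The paper simply absorbs the $j_1,j_2,j_3$ sums into the $Z_{k_i}$ norms and gets $k_1$ from the $j_4$ count alone.
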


\begin{proof}  We split the left-hand side of \eqref{a6} into two parts
	\begin{align*}
		\text{LHS}\eqref{a6}&\leq2^{2k_1}\| \eta_{k_4}(\xi)\eta_{\leq 3k_1+20}(\tau-\omega(\xi))(\tau-\omega(\xi)+i)^{-1}f_{k_1}\ast f_{k_2}\ast f_{k_3}\|_{Z_{k_4}}\\&\quad+2^{2k_1}\| \eta_{k_4}(\xi)\eta_{\geq 3k_1+21}(\tau-\omega(\xi))(\tau-\omega(\xi)+i)^{-1}f_{k_1}\ast f_{k_2}\ast f_{k_3}\|_{Z_{k_4}}\\&:=I+II.
	\end{align*}
	Let $f_{k_i,j_i}=f_{k_i}(\xi,\tau)\eta_{j_i}(\tau-\omega(\xi)),$ $j_i\geq0,$ $i=1,2,3$. If $j_4\leq 3k_1+20$, we have $(1+2^{\frac{j_4-3k_4}{2}})\les 1$. By applying Lemma \ref{4Zmultiplier} (d), we obtain
	\begin{align}\label{k4l}
		I&\les\sum_{\substack{0\leq j_4\leq 3k_1+20,\\j_1,j_2,j_3\geq 0}}2^{2k_1}2^{-\frac{j_4}{2}}2^{\frac{j_1+j_2+j_3+j_4}{2}}2^{-\frac{11k_1}{6}}2^{-\frac{k_3}{6}}\prod_{i=1}^{3}\|f_{k_i,j_i}\|_{L^2}\no\\&\les k_1\cdot2^{\frac{k_1-k_3}{6}}\prod_{i=1}^{3}\|f_{k_i}\|_{Z_{k_i}}.
	\end{align}
If $j_4\geq 3k_1+21$, we have $2^{\frac{j_4}{2}}(1+2^{\frac{j_4-3k_4}{2}})\sim 2^{j_4-\frac{3k_4}{2}}$. From $|\Omega(\xi_1,\xi_2,\xi_3)|\les 2^{3k_1}$ and \eqref{FCC}, we know $|j_{\text{max}}-j_{sub}|\leqslant 5$. It suffices to consider the worst case $(j_1,j_4)=(j_{\max},j_{\text{sub}})$. Similarly to \eqref{worst}, applying Lemma \ref{4Zmultiplier} (a) yields $II\les \prod_{i=1}^{3}\|f_{k_i}\|_{Z_{k_i}}$. The proposition is now proved.
\end{proof}

Next we consider the case where the output frequency, namely $k_4$, is low.

\begin{prop}{\rm (high$*$high$*$low$\to$low)}\label{hhll}
	Let $ k_1\geq100$, $|k_1-k_2|\leq5$, and $0\leq k_3,k_4\leq k_1-15$. Assume $f_{k_i}\in Z_{k_i}$ and that $\FF(f_{k_i})$ is supported in $\R \times I$ with $|I|\les 1$ for $i=1,2,3$, we have
	\begin{align}\label{a5}
		&2^{2k_1}\| \eta_{k_4}(\xi)(\tau-\omega(\xi)+i)^{-1}f_{k_1}\ast f_{k_2}\ast f_{k_3}\|_{Z_{k_4}}\no\\
\les &\ k_1 \cdot 2^{\frac{7(k_1+k_2)}{12}+\frac{k_{\rm{thd}}}{2}-\frac{k_{\rm{min}}}{6}-\frac{3k_4}{2}}\prod_{i=1}^{3}\|f_{k_i} \|_{Z_{k_i}}.
	\end{align}
\end{prop}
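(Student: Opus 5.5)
Since $k_4\le k_1-15$, the output lives at low frequency. I plan to place the output entirely in $X_{k_4}$, never using the $Y_{k_4}$ component: for $k_4\le 99$ we have $Z_{k_4}=X_{k_4}$ anyway, and for $k_4\ge 100$ the inclusion $X_{k_4}\subset Z_{k_4}$ is enough. Write $f_{k_i,j_i}=f_{k_i}\eta_{j_i}(\tau-\omega(\xi))$. The left-hand side of \eqref{a5} is then bounded by
\[
\sum_{j_1,\dots,j_4\ge0}2^{2k_1}2^{-\frac{j_4}{2}}\bigl(1+2^{\frac{j_4-3k_4}{2}}\bigr)\bigl\|\mathbf 1_{D_{k_4,j_4}}(f_{k_1,j_1}\ast f_{k_2,j_2}\ast f_{k_3,j_3})\bigr\|_{L^2}.
\]
Each input piece is controlled by Lemma \ref{aaa}(b): $2^{j_i/2}\|f_{k_i,j_i}\|_{L^2}\lesssim\|f_{k_i}\|_{Z_{k_i}}$, and with the extra gain $2^{3k_i/2-j_i}$ when $j_i\ge 3k_i$. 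This is what allows the $Y$-components of $f_{k_1},f_{k_2}$ to be absorbed.

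I then split according to $j_4$. For $j_4\le 3k_1+20$, I apply Lemma \ref{4Zmultiplier}(d), which is valid since $k_{\min}\le k_{\max}-15$. It gives the factor $2^{\frac{j_1+j_2+j_3+j_4}{2}}2^{-\frac{11k_1}{6}}2^{-\frac{k_{\min}}{6}}$, which cancels the weights $2^{j_i/2}$. The weight $(1+2^{(j_4-3k_4)/2})$ is harmless here, because $2^{-j_4/2}\cdot 2^{j_4/2}\cdot 2^{(j_4-3k_4)/2}\lesssim 2^{3k_1/2-3k_4/2}$. The output is roughly $2^{2k_1}2^{-11k_1/6}2^{-k_{\min}/6}2^{3(k_1-k_4)/2}$, up to a factor of $k_1$ coming from the $j_4$ sum (the summations over $j_1,j_2,j_3$ are absorbed by the $\ell^1$ structure of the $X_k$ norms). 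Interpolating this with Lemma \ref{4Zmultiplier}(a) and (c) should give the stated mixed exponent $\frac{7(k_1+k_2)}{12}+\frac{k_{\rm thd}}{2}-\frac{k_{\min}}{6}-\frac{3k_4}{2}$.

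Concretely, I would take a geometric mean of (d) with (c), the latter giving $2^{-k_{\max}}2^{k_{\rm thd}/2}$ and costing $2^{-j_{\max}/2}$. The $2^{k_{\rm thd}/2}$ in the target clearly comes from (c), and $\frac{7}{12}(k_1+k_2)\approx\frac{7}{6}k_1$ should be the balanced exponent. Alternatively, I would use the resonance: when $\xi_1,\xi_2$ are high and $\xi_3,\xi_4$ low, $|\Omega|\sim 2^{2k_1}|\xi_3+\xi_4|$ or similar, forcing $j_{\max}$ large. Gaining $2^{-j_{\max}/2}$ from (c) combined with $j_{\max}\gtrsim 2k_1+k_{\rm thd}$ in the nondegenerate configuration then compensates the $2^{2k_1}$. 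In the degenerate configuration, where $\xi_3+\xi_4$ is small, I would use (d) instead.

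For $j_4\ge 3k_1+21$, we have $|\Omega|\lesssim 2^{3k_1}\ll 2^{j_4}$, so \eqref{FCC} forces $j_{\rm sub}\approx j_4$. The worst case is $j_1\approx j_4$. There I use Lemma \ref{4Zmultiplier}(a) with $2^{(k_3+k_4)/2}$ and the gain $2^{3k_1/2-j_1}$ for $j_1\ge 3k_1$; the resulting geometric sum in $j_4$ gives a bound far better than required, up to the weight $2^{-3k_4/2}$, which appears in the target anyway.

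The main obstacle is the bookkeeping in the first regime: choosing, in each sub-case of which $j_i$ is maximal and whether $(k_3,k_4)$ is ordered $k_{\rm thd}$ versus $k_{\min}$, the right combination of Lemma \ref{4Zmultiplier}(c) and (d) to land exactly on the exponent in \eqref{a5}, together with the logarithmic factor $k_1$ from summing $j_4\le 3k_1+20$. I expect to accept the $k_1$ loss and not attempt to remove it.
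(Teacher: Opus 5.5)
There is a genuine gap, and it sits exactly in the step you flag as "bookkeeping." With your split at $3k_1+20$, Lemma~\ref{4Zmultiplier}(d) on the range $3k_4<j_4\le 3k_1+20$ gives the weight $2^{-j_4/2}(1+2^{(j_4-3k_4)/2})\sim 2^{-3k_4/2}$ times the factor $2^{j_4/2}$ left over from (d). The sum in $j_4$ is therefore $2^{2k_1-\frac{11k_1}{6}-\frac{k_{\min}}{6}-\frac{3k_4}{2}}\sum_{j_4\le 3k_1+20}2^{j_4/2}\sim 2^{\frac{5k_1}{3}-\frac{k_{\min}}{6}-\frac{3k_4}{2}}$, which exceeds the target by $2^{(k_1-k_{\rm thd})/2}$. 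So the weight is not "harmless."

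This loss is not cosmetic. At $s=3/4$, after placing $f_{k_1},f_{k_2}$ in $W^{3/4}$, region $R_6$ of Theorem~\ref{thm1} would be left with an unsummable factor $k_12^{k_1/6}$ instead of $k_12^{-k_1/3}$. The proposed "geometric mean" of (c) and (d) is never carried out, and it is not where the exponent comes from. Your alternative does not close either. In the degenerate configuration you fall back on (d), which still carries the growing factor $2^{j_4/2}$. Using the factorization of $\Omega$ only as a \emph{lower} bound on $j_{\max}$ gives you nothing that stops the $j_4$-sum before $3k_1$.

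The missing idea is to use the factorization of $\Omega$ as an \emph{upper} bound, valid in every configuration. Since $\xi_1+\xi_2=\xi_4-\xi_3$ with $\xi_3,\xi_4$ both low, $|\Omega(\xi_1,\xi_2,\xi_3)|=3|\xi_1+\xi_2||\xi_1+\xi_3||\xi_2+\xi_3|\lesssim 2^{2k_1+k_{\rm thd}}$. By \eqref{FCC}, this moves the threshold from $3k_1$ to $2k_1+k_{\rm thd}$, and the paper splits into three pieces.
\begin{itemize}
\item For $j_4\le 3k_4$, (d) gives $k_12^{(k_1-k_{\min})/6}$, which is below the target.
\item For $3k_4<j_4\le 2k_1+k_{\rm thd}+20$, (d) combined with $\sum 2^{j_4/2}\lesssim 2^{k_1+k_{\rm thd}/2}$ gives exactly $2^{\frac{7k_1}{6}+\frac{k_{\rm thd}}{2}-\frac{k_{\min}}{6}-\frac{3k_4}{2}}$.
\item For $j_4\ge 2k_1+k_{\rm thd}+21$, one has $|j_{\max}-j_{\rm sub}|\le 5$. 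Lemma~\ref{4Zmultiplier}(c) then gives $2^{k_1+\frac{k_{\rm thd}}{2}-\frac{3k_4}{2}}$, because the factor $2^{-j_{\max}/2}$ absorbs the extra $2^{j_4/2}$.
\end{itemize}
So the $2^{k_{\rm thd}/2}$ in the target comes from this resonance cutoff applied to the (d) bound, not from an interpolation with (c), and no interpolation is needed.

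Your treatment of $j_4\ge 3k_1+21$ via (a) is fine, and (a) with the input's high-modulation gain would also work on the new range $j_4\ge 2k_1+k_{\rm thd}+21$. The bounds in your other two ranges only become acceptable once the cutoff is lowered to $2k_1+k_{\rm thd}+20$.
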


\begin{proof} We split the left-hand side of \eqref{a5} into three parts
	\begin{align*}
		\text{LHS}\eqref{a5}&\leq2^{2k_1}\| \eta_{k_4}(\xi)\eta_{\leq 3k_4}(\tau-\omega(\xi))(\tau-\omega(\xi)+i)^{-1}f_{k_1}\ast f_{k_2}\ast f_{k_3}\|_{Z_{k_4}}\\&\quad+2^{2k_1}\| \eta_{k_4}(\xi)\eta_{[3k_4+1,2k_1+k_{\text{thd}}+20]}(\tau-\omega(\xi))(\tau-\omega(\xi)+i)^{-1}f_{k_1}\ast f_{k_2}\ast f_{k_3}\|_{Z_{k_4}}\\&\quad+2^{2k_1}\| \eta_{k_4}(\xi)\eta_{\geq 2k_1+k_{\text{thd}}+21}(\tau-\omega(\xi))(\tau-\omega(\xi)+i)^{-1}f_{k_1}\ast f_{k_2}\ast f_{k_3}\|_{Z_{k_4}}\\&:=I+II+III.
	\end{align*}
	Let $f_{k_i,j_i}=f_{k_i}(\xi,\tau)\eta_{j_i}(\tau-\omega(\xi)),$ $j_i\geqslant0,$ $i=1,2,3$. If $j_4\leq 3k_4$, we have $(1+2^{\frac{j_4-3k_4}{2}})\les 1$.  Similarly to \eqref{k4l}, applying Lemma \ref{4Zmultiplier} (d), we can obtain
$$
		I\les k_1\cdot2^{\frac{k_1-k_{\text{min}}}{6}}\prod_{i=1}^{3}\|f_{k_i}\|_{Z_{k_i}}.
$$
If $3k_4+1\leq j_4\leq 2k_1+k_{\text{thd}}+20$, we have $2^{\frac{j_4}{2}}(1+2^{\frac{j_4-3k_4}{2}})\sim 2^{j_4-\frac{3k_4}{2}}$. Then by applying Lemma \ref{4Zmultiplier} (d) and Lemma \ref{aaa} (b), we obtain
	\begin{align*}
		II&\les\sum_{\substack{j_4\leq 2k_1+k_{\text{thd}}+20,\\j_1,j_2,j_3\geq 0}}2^{2k_1}2^{-\frac{3k_4}{2}} 2^{\frac{j_1+j_2+j_3+j_4}{2}}2^{-\frac{11k	_1}{6}}2^{-\frac{k_{\text{min}}}{6}}\prod_{i=1}^{3}\|f_{k_i,j_i} \|_{L^2}\\
&\les 2^{\frac{7(k_1+k_2)}{12}}2^{\frac{k_{\text{thd}}}{2}-\frac{k_{\text{min}}}{6}}2^{-\frac{3k_4}{2}}\prod_{i=1}^{3}\|f_{k_i} \|_{Z_{k_i}}.
	\end{align*}
If $j_4\geq 2k_1+k_{\text{thd}}+21$, then $|\Omega(\xi_1,\xi_2,\xi_3)|\lesssim 2^{2k_1+k_{\text{thd}}}$ implies $|j_{\text{max}}-j_{\text{sub}}|\leqslant 5$. 
A direct calculation using Lemma \ref{4Zmultiplier} (c) yields
	\begin{align*}
		III&\les\sum_{\substack{j_{\max},j_{\text{sub}}\geq 2k_1+k_{\text{thd}}+15,\\j_{\text{thd}},j_{\min}\geq 0}}2^{2k_1}2^{-\frac{3k_4}{2}} 2^{\frac{j_1+j_2+j_3+j_4}{2}}2^{-\frac{j_{\max}}{2}}2^{-k_{\max}}2^{\frac{k_{\text{thd}}}{2}}\prod_{i=1}^{3}\|f_{k_i,j_i} \|_{L^2}\\
&\les\sum_{j_1,j_2,j_3\geq 0}2^{k_1+\frac{k_{\text{thd}}}{2}-\frac{3k_4}{2}} 2^{\frac{j_1+j_2+j_3}{2}}\prod_{i=1}^{3}\|f_{k_i,j_i} \|_{L^2}\\
&\les 2^{k_1+\frac{k_{\text{thd}}}{2}-\frac{3k_4}{2}}\prod_{i=1}^{3}\|f_{k_i} \|_{Z_{k_i}}.
	\end{align*}
Therefore, we complete the proof of the proposition.	
\end{proof}

\begin{prop}{\rm (high$*$high$*$(sub-high)$\to$low)}\label{hhhl}
	Let $ k_1\geqslant100$, $|k_1-k_2|\leqslant5$, $k_1-15\leqslant k_3\leqslant k_1$, and $0\leqslant k_4\leqslant k_1-15$. Assume  $f_{k_i}\in Z_{k_i}$ for $i=1,2,3$, we have
	\begin{align}\label{a9}
		2^{2k_1}\| \eta_{k_4}(\xi)(\tau-\omega(\xi)+i)^{-1}f_{k_1}\ast f_{k_2}\ast f_{k_3}\|_{Z_{k_4}}\lesssim  k_3\cdot 2^{\frac{3(k_1+k_2)}{4}}2^{-\frac{3k_4}{2}}\prod_{i=1}^{3}\|f_{k_i} \|_{Z_{k_i}}.
	\end{align}
\end{prop}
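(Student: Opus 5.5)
We follow the scheme of Proposition~\ref{hhll}, writing $f_{k_i,j_i}=f_{k_i}(\xi,\tau)\eta_{j_i}(\tau-\omega(\xi))$. The output frequency $k_4\le k_1-15$ is low, so the output is measured in the $X_{k_4}$ part of $Z_{k_4}$. Here all three input frequencies are comparable to $2^{k_1}$ and $k_{\min}=k_4$. We split according to the output modulation into
\begin{align*}
I&:\ j_4\le 3k_4, & II&:\ 3k_4< j_4\le 3k_1+20, & III&:\ j_4\ge 3k_1+21.
\end{align*}
On each piece the weight is $2^{-j_4/2}\bigl(1+2^{(j_4-3k_4)/2}\bigr)$, which is $\lesssim 2^{-j_4/2}$ in region $I$ and $\sim 2^{-3k_4/2}$ in regions $II$ and $III$.

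For $I$ and $II$ we apply Lemma~\ref{4Zmultiplier}~(d), which is available since $k_{\min}=k_4\le k_{\max}-15$. This gives the factor $2^{(j_1+j_2+j_3+j_4)/2}2^{-11k_1/6}2^{-k_4/6}$. The factors $2^{j_i/2}$, $i=1,2,3$, are absorbed by Lemma~\ref{aaa}~(b), while $2^{j_4/2}$ either cancels against $2^{-j_4/2}$ (in $I$) or is summed against $2^{-3k_4/2}$ (in $II$).
\begin{itemize}
\item In $I$ the $j_4$-sum costs a factor $k_1$, giving $k_1 2^{k_1/6-k_4/6}$.
\item In $II$ the sum $\sum_{j_4\le 3k_1+20}2^{j_4/2}\lesssim 2^{3k_1/2}$, so the total is $2^{2k_1}2^{-3k_4/2}2^{3k_1/2}2^{-11k_1/6}2^{-k_4/6}\lesssim 2^{5k_1/3}2^{-3k_4/2}$.
\end{itemize}
Since $k_1\approx k_2$, we have $\tfrac32 k_1\approx\tfrac34(k_1+k_2)$. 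Both bounds are therefore dominated by $k_3\,2^{3(k_1+k_2)/4}2^{-3k_4/2}$, using $5/3<3/2+\tfrac16$ up to constants. If the exponent bookkeeping turns out slightly worse, we will instead cap region $II$ at the resonance size $|\Omega|\lesssim 2^{2k_1+k_4}$ rather than $2^{3k_1}$, which produces exactly the $2^{3(k_1+k_2)/4}$ form.

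In region $III$, $|\Omega(\xi_1,\xi_2,\xi_3)|\lesssim 2^{3k_1}$, so \eqref{FCC} forces $|j_{\max}-j_{\rm sub}|\le5$ with $j_{\max}\gtrsim 3k_1$. We use Lemma~\ref{4Zmultiplier}~(c) when $k_{\rm thd}\le k_{\max}-10$, and otherwise (a). With (a) we obtain $2^{2k_1}2^{-3k_4/2}2^{-j_4/2}\cdot 2^{j_4/2}\,2^{(k_{\min}+k_{\rm thd})/2}2^{(j_{\min}+j_{\rm thd})/2}$, summed over $j_4\approx j_{\max}$. One remaining large modulation $j_i\ge 3k_1$ among the inputs gains $2^{-j_i/2}\cdot 2^{-(j_i-3k_i)/2}$ from its $X_{k_i}$ weight, which offsets the $2^{2k_1}$ derivative loss. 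The result is $\lesssim 2^{k_1+k_4/2-3k_4/2}$, well within the claim.

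The main obstacle is the logarithmic loss, which is why the factor $k_3$ appears. Summing over $j_4$ without a decaying weight, and over the dyadic localization in Lemma~\ref{4Zmultiplier}~(d), costs one factor of $k_1\sim k_3$. We will accept this loss rather than attempt to remove it. The delicate point is choosing the modulation cutoff so that no further power of $k_4$ is lost at the transition $j_4\approx 3k_4$. There we rely on the monotone weight $(1+2^{(j_4-3k_4)/2})$, treating the two sides of $3k_4$ separately as above.
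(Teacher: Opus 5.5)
Your regions $I$ and $III$ agree with the paper's argument: Lemma~\ref{4Zmultiplier}(d) gives $k_1 2^{(k_1-k_4)/6}$, and Lemma~\ref{4Zmultiplier}(a) gives $2^{k_1-k_4}$. Region $II$, however, has a genuine gap.

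In region $II$ the output weight is $2^{-j_4}\cdot 2^{j_4/2}(1+2^{(j_4-3k_4)/2})\sim 2^{-3k_4/2}$, which no longer decays in $j_4$. Lemma~\ref{4Zmultiplier}(d) keeps the factor $2^{j_4/2}$ inside $E$. Summing up to $j_4\approx 3k_1$ therefore gives $2^{2k_1}2^{3k_1/2}2^{-11k_1/6}2^{-k_4/6}2^{-3k_4/2}=2^{5(k_1-k_4)/3}$. This is \emph{not} $\lesssim k_3\,2^{3(k_1+k_2)/4}2^{-3k_4/2}\sim k_3\,2^{3(k_1-k_4)/2}$. The ratio is $2^{(k_1-k_4)/6}/k_3$, which is unbounded, e.g.\ for $k_4=0$ and $k_1\to\infty$. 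Your justification ``$5/3<3/2+1/6$'' is actually an equality, and the comparison you would really need is $2^{k_1/6}\lesssim k_3$, which is false.

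The fallback does not help either. In this configuration $\xi_1+\xi_2=\xi_4-\xi_3$, $\xi_1+\xi_3=\xi_4-\xi_2$ and $\xi_2+\xi_3=\xi_4-\xi_1$ all have size $\sim 2^{k_1}$, because $|\xi_4|\sim 2^{k_4}\ll 2^{k_1}$. Hence $|\Omega|\sim 2^{3k_1}$, and there is no cap at $2^{2k_1+k_4}$. That bound belongs to Proposition~\ref{hhll}, where the third input frequency is low.

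The missing ingredient is an estimate for $E$ in region $II$ that does not depend on $j_4$. This is Lemma~\ref{4Zmultiplier}(b) with $i=4$, or directly, by \eqref{L6},
$\|\mathbf{1}_{D_{k_4,j_4}}(f_{k_1}\ast f_{k_2}\ast f_{k_3})\|_{L^2}\le\prod_{i=1}^3\|\FF(f_{k_i})\|_{L^6_{x,t}}\lesssim 2^{-(k_1+k_2+k_3)/6}\prod_{i=1}^3\|f_{k_i}\|_{Z_{k_i}}$.
With this, every $j_4\in(3k_4,3k_1+20]$ contributes the same amount $2^{2k_1}2^{-3k_4/2}2^{-(k_1+k_2+k_3)/6}\lesssim 2^{3(k_1+k_2)/4}2^{-3k_4/2}$. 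The $O(k_1)=O(k_3)$ admissible values of $j_4$ then produce exactly the logarithmic factor $k_3$ in \eqref{a9}; this is the paper's proof. Lemma~\ref{4Zmultiplier}(d) is efficient only when paired with a decaying weight $2^{-j_4/2}$, as in your region $I$.

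Incidentally, your weaker bound $2^{5(k_1-k_4)/3}$ would still be summable in region $R_7$ of Theorem~\ref{thm1}, since $u_3$ carries the weight $2^{sk_3}$ with $s\ge 3/4>1/6$. It does not, however, establish the proposition as stated.
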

\begin{proof} Since $|\Omega(\xi_1,\xi_2,\xi_3)|=3|\xi_1+\xi_2||\xi_1+\xi_3||\xi_2+\xi_3|\sim 2^{3k_1}$ in this case, similar to Proposition \ref{hhll}, we split the left-hand side of \eqref{a9} into three parts
	\begin{align*}
		\text{LHS}\eqref{a9}&\leq2^{2k_1}\| \eta_{k_4}(\xi)\eta_{\leq 3k_4}(\tau-\omega(\xi))(\tau-\omega(\xi)+i)^{-1}f_{k_1}\ast f_{k_2}\ast f_{k_3}\|_{Z_{k_4}}\\&\quad+2^{2k_1}\| \eta_{k_4}(\xi)\eta_{[3k_4+1,3k_1+20]}(\tau-\omega(\xi))(\tau-\omega(\xi)+i)^{-1}f_{k_1}\ast f_{k_2}\ast f_{k_3}\|_{Z_{k_4}}\\&\quad+2^{2k_1}\| \eta_{k_4}(\xi)\eta_{\geq 3k_1+21}(\tau-\omega(\xi))(\tau-\omega(\xi)+i)^{-1}f_{k_1}\ast f_{k_2}\ast f_{k_3}\|_{Z_{k_4}}\\&:=I+II+III.
	\end{align*}
Let $f_{k_i,j_i}=f_{k_i}(\xi,\tau)\eta_{j_i}(\tau-\omega(\xi)),$ $j_i\geqslant0,$ $i=1,2,3$. Similarly to \eqref{k4l}, applying Lemma \ref{4Zmultiplier} (d), we can obtain
$$
		I\les k_3\cdot2^{\frac{k_1-k_{4}}{6}}\prod_{i=1}^{3}\|f_{k_i}\|_{Z_{k_i}}.
$$
For $II$, applying Lemma \ref{4Zmultiplier} (b), we get 
\begin{align*}
		II&\les\sum_{\substack{3k_4+1\leq j_4\leq 3k_1+20,\\j_1,j_2,j_3\geq 0}}2^{2k_1}2^{-\frac{3k_4}{2}} 2^{\frac{j_1+j_2+j_3}{2}}2^{-\frac{k_1+k_2+k_3}{6}}\prod_{i=1}^{3}\|f_{k_i,j_i} \|_{L^2}\\
&\les k_3\cdot 2^{\frac{3(k_1+k_2)}{4}}2^{-\frac{3k_4}{2}}\prod_{i=1}^{3}\|f_{k_i} \|_{Z_{k_i}}.
	\end{align*}
For $III$, we know that $|j_{\text{max}}-j_{\text{sub}}|\leqslant 5$. Without loss of generality, we consider $(j_1,j_4) = (j_{\max}, j_{\text{sub}})$. Applying Lemma \ref{4Zmultiplier} (a), we have
	\begin{align*}
		III		&\les\sum_{\substack{j_1\approx j_4\geq 3k_1+21\\0\leq  j_2,j_3\leq j_1}}2^{2k_1}2^{\frac{-3k_4}{2}}2^{\frac{j_2+j_3}{2}}2^{\frac{k_3+k_4}{2}}\prod_{i=1}^{3}\|f_{k_i,j_i} \|_{L^2}\no\\
&\les\sum_{j_1\geq 3k_1+21}2^{\frac {5k_1}{2}}2^{-k_4}2^{\frac{3k_1}{2}-j_1}2^{j_1-\frac{3k_1}{2}}\|f_{k_1,j_1} \|_{L^2}\prod_{i=2}^{3}\|f_{k_i} \|_{Z_{k_i}}\no\\&\les 2^{k_1-k_4}\prod_{i=1}^{3}\|f_{k_i} \|_{Z_{k_i}}.
	\end{align*}
This concludes the proof of the proposition.
\end{proof}

By a proof similar to that of Proposition \ref{hhlh}, we obtain the following proposition.

\begin{prop}{\rm (high$*$high$*$low$\to$(sub-high))}
	Let $ k_1\geqslant100$, $|k_1-k_2|\leqslant5$, $k_1-15\leqslant k_4\leqslant k_1$, and $0\leqslant k_3\leqslant k_1-15$. Assume  $f_{k_i}\in Z_{k_i}$ for $i=1,2,3$, we have
	\begin{align*}
		2^{2k_1}\| \eta_{k_4}(\xi)(\tau-\omega(\xi)+i)^{-1}f_{k_1}\ast f_{k_2}\ast f_{k_3}\|_{Z_{k_4}}\lesssim k_1\cdot 2^{\frac{k_1-k_3}{6}}\prod_{i=1}^{3}\|f_{k_i} \|_{Z_{k_i}}.
	\end{align*}
\end{prop}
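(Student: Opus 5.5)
Since $k_4$ is comparable to $k_1$ and $k_3$ is well separated from the top, I will follow the proof of Proposition~\ref{hhlh}. The new feature is that the two highest frequencies $k_1\approx k_2$ are now both inputs, and the output $k_4$ is sub-high.

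The first step is to check the resonance size. With $|\xi_1|\sim|\xi_2|\sim 2^{k_1}$, $|\xi_4|\sim 2^{k_4}$ with $k_4\ge k_1-15$, and $|\xi_3|\sim 2^{k_3}\ll 2^{k_1}$, we have
\begin{align*}
|\Omega(\xi_1,\xi_2,\xi_3)|=3|\xi_1+\xi_2||\xi_1+\xi_3||\xi_2+\xi_3|\lesssim 2^{3k_1}.
\end{align*}
Here $|\xi_1+\xi_2|=|\xi_4-\xi_3|\sim 2^{k_4}$ and the other two factors are $\sim 2^{k_1}$. So in fact $|\Omega|\sim 2^{3k_1}$, which only helps. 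I then split the left-hand side into $I$, where $\eta_{\le 3k_1+20}(\tau-\omega(\xi))$ is applied to the output, and $II$, where $\eta_{\ge 3k_1+21}$ is applied, exactly as in the proof of Proposition~\ref{hhlh}. Since $Z_{k_4}\supset X_{k_4}$, it suffices to bound both pieces in $X_{k_4}$. I never need the $Y_{k_4}$ component for the output. On the input side, Lemma~\ref{aaa}(b) lets me decompose each $f_{k_i}$ into modulation pieces $f_{k_i,j_i}$ with $\sum_{j_i}2^{j_i/2}\|f_{k_i,j_i}\|_{L^2}\lesssim\|f_{k_i}\|_{Z_{k_i}}$.

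For $I$, the weight $1+2^{(j_4-3k_4)/2}$ is $O(1)$ because $j_4\le 3k_1+20\le 3k_4+C$. The hypotheses $k_{\min}\le k_3\le k_1-15=k_{\max}-15$ make Lemma~\ref{4Zmultiplier}(d) applicable, with $k_{\max}\approx k_1$ and $k_{\min}\ge\min(k_3,\cdot)$. This gives
\begin{align*}
I\lesssim\sum_{j_4\le 3k_1+20}2^{2k_1}2^{-\frac{j_4}{2}}2^{\frac{j_1+j_2+j_3+j_4}{2}}2^{-\frac{11k_1}{6}}2^{-\frac{k_3}{6}}\prod_{i=1}^3\|f_{k_i,j_i}\|_{L^2}.
\end{align*}
The factor $2^{-j_4/2}$ cancels $2^{j_4/2}$, and summing over $j_4$ gives the factor $k_1$. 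Summing over $j_1,j_2,j_3$ with the input weights gives $k_1 2^{(k_1-k_3)/6}\prod\|f_{k_i}\|_{Z_{k_i}}$. If $k_{\min}$ happened to be $k_4$ this would be impossible, since $k_4$ is large; so $k_{\min}=k_3$, and this matches the claimed bound.

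For $II$, the output modulation exceeds $2^{3k_1+21}\gg|\Omega|$, so by \eqref{FCC} $j_{\max}$ and $j_{\rm sub}$ differ by at most $5$. The worst configuration is $(j_1,j_4)=(j_{\max},j_{\rm sub})$ with $j_1$ on a high input; the case $j_2=j_{\max}$ is symmetric. For it I apply Lemma~\ref{4Zmultiplier}(a) with $k_{\min}+k_{\rm thd}$ equal to $k_3$ plus roughly $k_1$, giving the factor $2^{(k_3+k_1)/2}$ and the modulation factor $2^{(j_2+j_3)/2}$. The output weight is $2^{j_4/2}(1+2^{(j_4-3k_4)/2})2^{-j_4}\sim 2^{-3k_4/2}$, and on the input side $\|f_{k_1,j_1}\|_{L^2}\lesssim 2^{3k_1/2-j_1}\|f_{k_1}\|_{Z_{k_1}}$. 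Combining these,
\begin{align*}
II\lesssim 2^{2k_1}2^{-\frac{3k_1}{2}}2^{\frac{k_1+k_3}{2}}\sum_{j_1\ge 3k_1}2^{\frac{3k_1}{2}-j_1}\prod_{i=1}^3\|f_{k_i}\|_{Z_{k_i}}\lesssim 2^{\frac{k_3-k_1}{2}}\prod_{i=1}^3\|f_{k_i}\|_{Z_{k_i}},
\end{align*}
which is harmless. If instead $j_3=j_{\max}$ (the low input) is paired with $j_4$, then Lemma~\ref{4Zmultiplier}(a) gives $2^{k_3+k_1}$ at worst. The gain $2^{-j_3}2^{3k_3/2}\le 2^{-3k_1}2^{3k_3/2}$ then makes this piece even better.

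The main obstacle is the bookkeeping in $I$. I must confirm that Lemma~\ref{4Zmultiplier}(d) applies with $k_{\min}=k_3$, which it does since $k_3\le k_1-15$. The summation over $j_4\le 3k_1+20$ produces only the logarithmic factor $k_1$ and no power loss. Unlike the high$*$low$*$low case, no $Y_k$ smoothing component is needed, because the two high-frequency inputs share the derivative load through the $2^{-11k_1/6}$ gain.
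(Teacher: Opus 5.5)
Your argument is the paper's own: the paper proves this proposition only by pointing to Proposition~\ref{hhlh}, which is exactly your split at $j_4=3k_1+20$, with Lemma~\ref{4Zmultiplier}(d) on the low output modulations (the $j_4$-sum produces the factor $k_1$) and Lemma~\ref{4Zmultiplier}(a) on the high ones in the configuration $(j_1,j_4)=(j_{\max},j_{\rm sub})$. Two small corrections: first, when $(j_3,j_4)=(j_{\max},j_{\rm sub})$, Lemma~\ref{4Zmultiplier}(a) gives $2^{(k_1+k_3)/2}$, leading to $2^{2(k_3-k_1)}$, not $2^{k_1+k_3}$, and with $2^{k_1+k_3}$ your ``even better'' claim would actually fail for $k_3$ close to $k_1-15$; second, Lemma~\ref{aaa}(b) only bounds each modulation piece separately, so for a $Y_{k_i}$-component of an input the $\ell^1$-sum over $j_i\le 2k_i$ can cost a factor $k_i$ --- a point the paper also passes over, harmless for Theorem~\ref{thm1} but not literally yielding the single factor $k_1$.
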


\begin{prop}{\rm (high$*$high$*$high$\to$high)}\label{I+II}
	Let $ k_1\geq100$, $|k_1-k_2|\leq 5$, and $k_1-15\leq k_3,k_4\leq k_1$. Assume $f_{k_i}\in Z_{k_i}$ for $i=1,2,3$, we have
	\begin{align}\label{a2}
		2^{2k_1}\| \eta_{k_4}(\xi)(\tau-\omega(\xi)+i)^{-1}f_{k_1}\ast f_{k_2}\ast f_{k_3}\|_{Z_{k_4}}\les 2^{\frac{3k_1+3k_2}{4}}\prod_{i=1}^{3}\|f_{k_i} \|_{Z_{k_i}}.
	\end{align}
\end{prop}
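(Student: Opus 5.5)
The plan is to follow the three-part split used in Propositions \ref{hhll} and \ref{hhhl}, according to the output modulation $j_4$. Here all four frequencies are comparable, $k_i\approx k_1$. The resonance $|\Omega|=3|\xi_1+\xi_2||\xi_1+\xi_3||\xi_2+\xi_3|$ can be as large as $2^{3k_1}$, but it may also degenerate, since sums such as $\xi_1+\xi_2$ may vanish. Therefore the estimate should rely only on Lemma \ref{4Zmultiplier} (b) together with the Strichartz-type bounds, and not on any lower bound for $\Omega$. Set $f_{k_i,j_i}=f_{k_i}\eta_{j_i}(\tau-\omega(\xi))$ and write
\begin{equation*}
\text{LHS}\eqref{a2}\le I+II,
\end{equation*}
where $I$ carries the cutoff $\eta_{\le 3k_1+20}(\tau-\omega(\xi))$ and $II$ carries $\eta_{\ge 3k_1+21}(\tau-\omega(\xi))$. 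In both pieces we measure the output in $X_{k_4}$, which is allowed because $X_{k_4}\subset Z_{k_4}$. We then use Lemma \ref{aaa} (b) to bound $\sum_{j_i}2^{j_i/2}\|f_{k_i,j_i}\|_{L^2}\lesssim\|f_{k_i}\|_{Z_{k_i}}$.

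\textbf{Step 1 (piece $I$).} For $j_4\le 3k_1+20$ the weight $(1+2^{(j_4-3k_4)/2})$ is $O(1)$, so the $X_{k_4}$ weight combined with $(\tau-\omega+i)^{-1}$ gives $2^{-j_4/2}$. Apply Lemma \ref{4Zmultiplier} (b) with $i=4$. This yields
\begin{equation*}
E\lesssim 2^{\frac{j_1+j_2+j_3}{2}}\,2^{-\frac{k_1+k_2+k_3}{6}}\prod_{i=1}^{3}\|f_{k_i,j_i}\|_{L^2}.
\end{equation*}
Hence $I$ is bounded by $2^{2k_1}2^{-k_1/2}$ times the sum over $j_4$ of $2^{-j_4/2}2^{j_4/2}$. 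To avoid the factor $k_1$ coming from the $j_4$-sum, use the bound in $L^2$ directly: by Plancherel and $\|\cdot\|_{L^6}^3$, the $\ell^1$ sum over $j_4$ of $2^{-j_4/2}\cdot 2^{j_4/2}\|1_{D_{k_4,j_4}}\cdots\|$ can be replaced by $\ell^2$ summation against $\sum_{j_4}\|\cdots\|^2$. Since $k_i\approx k_1$, the result is $I\lesssim 2^{2k_1-k_1/2}\prod\|f_{k_i}\|_{Z_{k_i}}=2^{3k_1/2}\prod\|f_{k_i}\|_{Z_{k_i}}$, which is exactly $2^{3(k_1+k_2)/4}$. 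If the $\ell^2$ trick turns out to be awkward, I would accept a logarithmic loss of $k_1$ there; this loss can be absorbed later when summing with $s\ge 3/4$, although the statement as written has no $k_1$.

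\textbf{Step 2 (piece $II$).} For $j_4\ge 3k_1+21$ we have $|\Omega|\lesssim 2^{3k_1}$, so \eqref{FCC} forces $|j_{\max}-j_{\rm sub}|\le 5$, with $j_4$ among the top two modulations. The weight is $2^{j_4/2}(1+2^{(j_4-3k_4)/2})\,2^{-j_4}\sim 2^{-3k_4/2}$. As in \eqref{worst0}, take the worst pairing $(j_1,j_4)=(j_{\max},j_{\rm sub})$ and apply Lemma \ref{4Zmultiplier} (a) to get
\begin{equation*}
E\lesssim 2^{\frac{k_2+k_3}{2}}2^{\frac{j_2+j_3}{2}}\prod_{i=1}^{3}\|f_{k_i,j_i}\|_{L^2}.
\end{equation*}
Bound $\|f_{k_1,j_1}\|_{L^2}\lesssim 2^{3k_1/2-j_1}\|f_{k_1}\|_{Z_{k_1}}$ using the enhanced $X_{k_1}$ weight for $j_1\ge 3k_1$. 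The sum over $j_4\approx j_1$ then produces $2^{2k_1-3k_1/2+k_1}2^{3k_1/2-j_1}$, and summing over $j_1\ge 3k_1$ gives $II\lesssim 2^{k_1}\prod\|f_{k_i}\|_{Z_{k_i}}$. This is better than required. The remaining pairings are handled the same way, putting the enhanced weight on whichever input carries $j_{\max}$.

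\textbf{Main obstacle.} The main difficulty is Step 1. It is the only place where the degeneracy of $\Omega$ matters, and it is where a logarithmic factor can appear from summing over $j_4\le 3k_1$. I would first try to remove that factor by exploiting the $\ell^2$ orthogonality in $j_4$ via the $L^6_{x,t}$ Strichartz bound \eqref{L6} applied to all three inputs. Failing that, I would split $j_4$ at $2k_1$ and estimate the low part through $Y_{k_4}$ instead, as in part $I$ of Proposition \ref{hllh}. That means bounding $2^{k_1}\|\FF(f_{k_1}\ast f_{k_2}\ast f_{k_3})\|_{L^1_xL^2_t}$ by \eqref{Lt2} and \eqref{Lx2}, which gives exactly $2^{k_1}2^{-k_1}2^{\frac{3k_2+3k_3}{4}}$. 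This is consistent with the exponent $2^{3(k_1+k_2)/4}$ in \eqref{a2}.
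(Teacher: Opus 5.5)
Your plan is the paper's proof. You split at $j_4\approx 3k_1$, treat the low-modulation part by Plancherel, H\"older and \eqref{L6}, and treat the high-modulation part by Lemma~\ref{4Zmultiplier}~(a) with the enhanced $X_{k_1}$ weight on the input carrying $j_{\max}$. Step~2 is the paper's argument; your own arithmetic there gives $2^{3k_1-j_1}$, hence $O(1)$, not $2^{k_1}$.

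The bookkeeping in Step~1 is wrong, and the justification you give there would not work as written. The bound you display from Lemma~\ref{4Zmultiplier}~(b) with $i=4$ has no factor $2^{j_4/2}$. After the $X_{k_4}$ weight and the resolvent, the $j_4$-sum is therefore $\sum_{j_4}2^{-j_4/2}=O(1)$, so there is no factor $k_1$ to remove. Conversely, if the weight really were $2^{-j_4/2}\cdot 2^{j_4/2}=1$, passing from $\ell^1$ to $\ell^2$ in $j_4$ would not help: Cauchy--Schwarz over $\sim 3k_1$ values of $j_4$ still costs $\sqrt{k_1}$. The correct chain, which is what the paper does, is
\begin{align*}
\sum_{j_4\geq 0}2^{-\frac{j_4}{2}}\big\|\mathbf{1}_{D_{k_4,j_4}}(f_{k_1}\ast f_{k_2}\ast f_{k_3})\big\|_{L^2}
&\lesssim \|f_{k_1}\ast f_{k_2}\ast f_{k_3}\|_{L^2}\lesssim \prod_{i=1}^{3}\|\FF(f_{k_i})\|_{L^6_{x,t}}\\
&\lesssim 2^{-\frac{k_1+k_2+k_3}{6}}\prod_{i=1}^{3}\|f_{k_i}\|_{Z_{k_i}}.
\end{align*}

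It matters that \eqref{L6} is applied to the unlocalized $f_{k_i}\in Z_{k_i}$, as in Proposition~\ref{prop2.5}, which handles the $Y_k$ part through Lemma~\ref{lemma2.4}. You should not apply it to the pieces $f_{k_i,j_i}$ and then sum over $j_1,j_2,j_3$. Your assertion that Lemma~\ref{aaa}~(b) gives $\sum_j 2^{j/2}\|f_{k,j}\|_{L^2}\lesssim\|f_k\|_{Z_k}$ is not correct, because that lemma controls each single $j$ only. For a $Y_k$ component as in \eqref{g_k}, take $h(x,\tau)$ concentrated in $x$. Then every block $0\le j\le 2k$ contributes comparably, and the sum is of size $k\|f_k\|_{Y_k}$; this is exactly the logarithmic gap between $X_k$ and $Y_k$.

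Consequently, your initial route through Lemma~\ref{4Zmultiplier}~(b) would lose up to $k_1^3$ in Step~1. In Step~2 the same loss in the $j_2,j_3$ sums is harmless, because the bound there has a lot of slack. Finally, your fallback through $Y_{k_4}$ is unnecessary. It would also require the time localization $|I|\lesssim 1$ in \eqref{Lx2}, which is not a hypothesis of this proposition; it holds in the application only because of the cutoff $\psi$.
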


\begin{proof}  We split the left-hand side of \eqref{a2} into two parts
	\begin{align*}
		\text{LHS}\eqref{a2}&\leq2^{2k_1}\| \eta_{k_4}(\xi)\eta_{\leq 3k_4+20}(\tau-\omega(\xi))(\tau-\omega(\xi)+i)^{-1}f_{k_1}\ast f_{k_2}\ast f_{k_3}\|_{Z_{k_4}}\\&\quad+2^{2k_1}\| \eta_{k_4}(\xi)\eta_{\geq 3k_4+21}(\tau-\omega(\xi))(\tau-\omega(\xi)+i)^{-1}f_{k_1}\ast f_{k_2}\ast f_{k_3}\|_{Z_{k_4}}\\&:=I+II.
	\end{align*}
	By using the $X_k$ norm, H\"older's inequality and \eqref{L6}, we have
	\begin{align*}
		I&\les\sum_{0\leq j_4\leq 3k_4+20}2^{2k_1}2^{-\frac{j_4}{2}}\|1_{D_{k_4, j_4}}(\xi, \tau)(f_{k_1} \ast f_{k_2} \ast f_{k_3}) \|_{L_{\xi,\tau}^2}  \\&\les2^{2k_1}\|\FF(f_{k_1}) \|_{L_x^6L_t^6}\|\FF(f_{k_2}) \|_{L_x^6L_t^6}\|\FF(f_{k_3}) \|_{L_x^6L_t^6}\\&\les 2^{2k_1}2^{-\frac{k_1+k_2+k_3}{6}}\prod_{i=1}^{3}\|f_{k_i} \|_{Z_{k_i}}\les 2^{\frac{3k_1+3k_2}{4}}\prod_{i=1}^{3}\|f_{k_i} \|_{Z_{k_i}}.
	\end{align*}
For $II$, $j_4\geq 3k_4+21$, we have $2^{\frac{j_4}{2}}(1+2^{\frac{j_4-3k_4}{2}})\sim 2^{j_4-\frac{3k_4}{2}}$. From $|\Omega(\xi_1,\xi_2,\xi_3)|\les 2^{3k_4}$ and \eqref{FCC}, we know $|j_{\text{max}}-j_{\rm sub}|\leqslant 5$. By symmetry, we may assume $(j_1,j_4) = (j_{\max}, j_{\text{sub}})$. Then, similarly to \eqref{worst0}, applying Lemma \ref{4Zmultiplier} (a) yields
\begin{align*}
		II&\les\sum_{\substack{j_1\approx j_4\geq 3k_4+15\\0\leq  j_2,j_3\leq j_1}}2^{2k_1}2^{-\frac{j_4}{2}}\bigl(1+2^{\frac{j_4-3k_4}{2}}\bigr)2^{\frac{j_2+j_3}{2}}2^{\frac{k_2+k_3}{2}}\prod_{i=1}^{3}\|f_{k_i,j_i} \|_{L^2}\no\\
&\les\sum_{j_1\geq 3k_4+15}2^{\frac {k_1}{2}}2^{\frac{k_2+k_3}{2}}2^{\frac{3k_1}{2}-j_1}2^{j_1-\frac{3k_1}{2}}\|f_{k_1,j_1} \|_{L^2}\prod_{i=2}^{3}\|f_{k_i} \|_{Z_{k_i}}\les \prod_{i=1}^{3}\|f_{k_i} \|_{Z_{k_i}}.
	\end{align*}
The proposition is now proved.
\end{proof}

\begin{thm}\label{thm1}
For $s\geqslant 3/4$, we have
\begin{align}\label{1}
	\|\partial_{xx}(\psi(t)u_1)\cdot(\psi(t)u_2)(\psi(t)u_3)\|_{V^s}\les&\|u_1\|_{W^{\frac{3}{4}}}\|u_2\|_{W^{\frac{3}{4}}} \|u_3\|_{W^s}+\|u_1\|_{W^{\frac{3}{4}}}\|u_2\|_{W^s}\|u_3\|_{W^{\frac{3}{4}}}\no\\&+\|u_1\|_{W^s}\|u_2\|_{W^{\frac{3}{4}}}\|u_3\|_{W^{\frac{3}{4}}}.
\end{align}
\end{thm}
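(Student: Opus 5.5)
We will prove \eqref{1} by dyadic decomposition, reducing it to the localized estimates of Propositions \ref{prop1}--\ref{I+II}. Write $f_{k_i}=\eta_{k_i}(\xi)\F(\psi u_i)$ for $i=1,2,3$. Multiplication by $\psi(t)$ acts in Fourier space as convolution in $\tau$ with an $L^1$ kernel. Using Lemma \ref{aaa}(a), the standard argument of \cite{IonescuKenig07} then gives $\|f_{k_i}\|_{Z_{k_i}}\les \|\eta_{k_i}(\xi)\F u_i\|_{Z_{k_i}}$. Where a proposition requires time support in an interval $I$ with $|I|\les 1$, we will use a smooth partition of unity in time to bring $\psi u_i$ into that form.

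The Fourier transform of $\partial_{xx}(\psi u_1)\cdot\psi u_2\cdot\psi u_3$ is $-\xi_1^2$ times the convolution $f_{k_1}*f_{k_2}*f_{k_3}$, summed over $k_1,k_2,k_3$. The factor $\xi_1^2\eta_{k_1}(\xi_1)$ has the form $2^{2k_1}h(\xi_1)$ with $\|\F^{-1}h\|_{L^1}\les1$, so by Lemma \ref{aaa}(a) we may replace $\xi_1^2$ by $2^{2k_1}$. We then need to bound
\[
\sum_{k_4}2^{2sk_4}\Big(\sum_{k_1,k_2,k_3}2^{2k_1}\|\eta_{k_4}(\tau-\omega+i)^{-1}f_{k_1}*f_{k_2}*f_{k_3}\|_{Z_{k_4}}\Big)^2 .
\]
By \eqref{k}, the two largest of $k_1,\dots,k_4$ differ by at most $5$. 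The derivative sits on $f_{k_1}$, so we cannot symmetrize $k_1$ against $k_2,k_3$; we may only assume $k_2\ge k_3$. The propositions are stated with $k_1$ largest, but their proofs only use the multiplier bounds of Lemma \ref{4Zmultiplier}, which are symmetric. Hence configurations in which $k_1$ is not maximal are covered by the same estimates, with weight $2^{2k_1}\le 2^{2k_{\max}}$.

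We treat the cases in turn.

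\emph{Low frequencies, all $k_i\le 99$.} Proposition \ref{prop1} applies directly.

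\emph{High$*$low$*$low$\to$high, $k_1\approx k_4$ large.}
\begin{itemize}
\item If $k_2\le k_3+5$, Proposition \ref{hllh}(a) gives the factor $2^{\frac34(k_2+k_3)}$. This is exactly absorbed by the $W^{3/4}$ norms of $u_2,u_3$; the weight $2^{sk_4}\approx 2^{sk_1}$ goes onto $u_1$. Summing over $k_2\approx k_3$ requires the diagonal Cauchy--Schwarz bound $\sum_{k_2}a_{k_2}b_{k_2}\le\|a\|_{\ell^2}\|b\|_{\ell^2}$. This is precisely why the threshold is $3/4$ with no room to spare.
\item If $k_3\le k_2-6$, Proposition \ref{hllh}(b) gives $2^{(k_2+k_3)/4}$, which is summable against $2^{-\frac34(k_2+k_3)}$.
\end{itemize}

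\emph{High$*$(sub-high)$*$low$\to$high, and the analogous case with output sub-high.} Propositions \ref{hhlh} and its companion give $k_1 2^{(k_1-k_3)/6}$. Put $2^{sk_1}$ on one high factor and $2^{3k_1/4}$ on the other. Since $k_1 2^{(k_1-k_3)/6}2^{-3k_1/4}$ is summable in $k_1\ge k_3$, this closes.

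\emph{High$*$high$*$low$\to$low.} Proposition \ref{hhll} gives $k_1 2^{\frac{7}{12}(k_1+k_2)+\frac{k_{\rm thd}}2-\frac{k_{\min}}6-\frac{3k_4}2}$. We weight the output by $2^{sk_4}$ and the two high inputs by $2^{sk_1}\cdot2^{3k_1/4}$. Using $s\ge 3/4$ and $\frac{7}{6}<\frac32$, the exponent in $k_1$ is negative. The exponent in the low frequencies is controlled because $k_{\rm thd},k_{\min}\in\{k_3,k_4\}$ and the $2^{-3k_4/2}$ together with the $2^{sk_4}$ weight leave at most a bounded positive power, which is dominated by the gain in $k_1$.

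\emph{High$*$high$*$(sub-high)$\to$low.} Proposition \ref{hhhl} gives $k_32^{\frac34(k_1+k_2)-\frac32k_4}$. Two factors carry $2^{\frac34 k}$, the third carries $2^{sk_3}$, and the output weight $2^{(s-3/2)k_4}$ is harmless after summing because of the $2^{-\frac34 k_1}$ gained from the third factor.

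\emph{High$*$high$*$high$\to$high.} Proposition \ref{I+II} gives $2^{\frac34(k_1+k_2)}$, which closes the same way with one factor in $W^s$ and the other two in $W^{3/4}$.

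Which factor receives the $W^s$ norm follows the largest frequency. This produces the three terms on the right side of \eqref{1}.

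The main obstacle is the high$*$low$*$low case with $k_2\approx k_3$, where the dyadic bound is borderline. Summation there must use the square-summability in the definition of $W^{3/4}$, applied along the diagonal $k_2\approx k_3$ via Cauchy--Schwarz, rather than any decay in $k$. I would also check that the time-localization hypotheses of Proposition \ref{hllh} are compatible with the $\psi$ cutoffs through Lemma \ref{aaa}(a).
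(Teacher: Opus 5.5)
Your proposal is correct and follows essentially the same route as the paper. Both arguments reduce, via the definition of $V^s$, to the dyadic pieces $2^{2k_1}\|\eta_{k_4}(\xi)(\tau-\omega(\xi)+i)^{-1}f_{k_1}*f_{k_2}*f_{k_3}\|_{Z_{k_4}}$, split into the same frequency regions, invoke Propositions~\ref{prop1}--\ref{I+II}, and sum with Cauchy--Schwarz.

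You are more careful than the written proof on two points it leaves implicit:
\begin{itemize}
\item In the borderline region $k_2\approx k_3$, the diagonal Cauchy--Schwarz is genuinely needed; the paper's displayed bound $\sum_k 2^{k/4}\|f_k\|_{Z_k}\lesssim\|u\|_{W^{3/4}}$ does not suffice there.
\item $k_1$ need not be maximal. This is handled by the symmetry of the convolution in its three inputs together with $2^{2k_1}\le 2^{2k_{\max}}$, rather than by symmetry of Lemma~\ref{4Zmultiplier} as you suggest.
\end{itemize}
The time partition of unity you mention is unnecessary, since $\psi$ already localizes to $[-2,2]$.
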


\begin{proof}
Set \(f_{k_i}(\xi,\tau)=\eta_{k_i}(\xi)\mathcal{F}(\psi(t)u_i)(\xi,\tau)\) for \(i=1,2,3\). By the definition of the \(V^s\) norm,
\begin{align*}
&\|\partial_{xx}(\psi(t)u_1)\cdot(\psi(t)u_2)(\psi(t)u_3)\|_{V^s}^2 \\
\lesssim&\sum_{k_4=0}^{\infty}2^{2sk_4}
\Bigl(\sum_{k_1,k_2,k_3\in\mathbb{Z}_+}2^{2k_1}
\bigl\|\eta_{k_4}(\xi)(\tau-\omega(\xi)+i)^{-1}f_{k_1}*f_{k_2}*f_{k_3}\bigr\|_{Z_{k_4}}\Bigr)^2.
\end{align*}
Since the derivative falls on $u_1$, we may assume without loss of generality that $k_1 \geq k_2 \geq k_3$. Then we only need to control
\begin{align}\label{eight}
\sum_{k_1\ge k_2\ge k_3\ge 0}
2^{2k_1}\bigl\|\eta_{k_4}(\xi)(\tau-\omega(\xi)+i)^{-1}f_{k_1}*f_{k_2}*f_{k_3}\bigr\|_{Z_{k_4}}.
\end{align}
Given that $|k_{\text{max}} - k_{\text{sub}}| \leq 5$, we split the above sum into eight regions
\begin{align*}
\eqref{eight}\le \sum_{j=1}^{8}\sum_{(k_1,k_2,k_3,k_4)\in R_j}
2^{2k_1}\bigl\|\eta_{k_4}(\xi)(\tau-\omega(\xi)+i)^{-1}f_{k_1}*f_{k_2}*f_{k_3}\bigr\|_{Z_{k_4}},
\end{align*}
where
\begin{align*}
R_1&=\{0 \le k_1,k_2,k_3,k_4\le 99\},\\
R_2&=\{k_1\ge 100,\ |k_1-k_4|\le 5,\ 0\le k_2,k_3\le k_1-15,\ k_3\le k_2\le k_3+5\},\\
R_3&=\{k_1\ge 100,\ |k_1-k_4|\le 5,\ 0\le k_2,k_3\le k_1-15,\ k_3\le k_2-6\},\\
R_4&=\{k_1\ge 100,\ |k_1-k_4|\le 5,\ k_1-15\le k_2\le k_1,\ 0\le k_3\le k_1-15\},\\
R_5&=\{k_1\ge 100,\ |k_1-k_4|\le 5,\ k_1-15\le k_2,k_3\le k_1\},\\
R_6&=\{k_1\ge 100,\ |k_1-k_2|\le 5,\ 0\le k_3,k_4\le k_1-15\},\\
R_7&=\{k_1\ge 100,\ |k_1-k_2|\le 5,\ k_1-15\le k_3\le k_1,\ 0\le k_4\le k_1-15\},\\
R_8&=\{k_1\ge 100,\ |k_1-k_2|\le 5,\ k_1-15\le k_4\le k_1,\ 0\le k_3\le k_1-15\}.
\end{align*}
We now estimate each region using the previously established trilinear estimates. As an example, consider \((k_1,k_2,k_3,k_4)\in R_3\). By Proposition \ref{hllh} (b) (the improved estimate for well‑separated low frequencies) and H\"older's inequality,
\begin{align*}
\text{LHS of }\eqref{1}
&\lesssim \sum_{k_4=0}^{\infty}2^{2sk_4}
\Bigl(\sum_{(k_1,k_2,k_3,k_4)\in R_3}
2^{\frac{k_2+k_3}{4}}\prod_{i=1}^{3}\|f_{k_i}\|_{Z_{k_i}}\Bigr)^2\\
&\lesssim \sum_{k_1=0}^{\infty}2^{2sk_1}\|f_{k_1}\|_{Z_{k_1}}^2
\Bigl(\sum_{k_2=0}^{\infty}2^{\frac{k_2}{4}}\|f_{k_2}\|_{Z_{k_2}}\Bigr)^2
\Bigl(\sum_{k_3=0}^{\infty}2^{\frac{k_3}{4}}\|f_{k_3}\|_{Z_{k_3}}\Bigr)^2\\
&\lesssim \|u_1\|_{W^s}^2\;\|u_2\|_{W^{3/4}}^2\;\|u_3\|_{W^{3/4}}^2,
\end{align*}
where we used that \(\sum_{k}2^{k/4}\|f_k\|_{Z_k}\lesssim \|u\|_{W^{3/4}}\) (by Cauchy-Schwarz with the convergent sum \(\sum 2^{-k/2}\)). Now we use Proposition \ref{hhhl} to handle the region $R_7$, from the fact that $|k_1-k_2|\le5$ (so $k_2\approx k_1$), we obtain from Cauchy-Schwarz inequality
\begin{align*}
\text{LHS of }\eqref{1}&\lesssim \sum_{k_4=0}^{\infty} 2^{2sk_4} \Bigl( \sum_{(k_1,k_2,k_3,k_4)\in R_7} k_3 \cdot 2^{\frac{3(k_1+k_2)}{4}} 2^{-\frac{3k_4}{2}} 
\prod_{i=1}^{3}\|f_{k_i}\|_{Z_{k_i}}\Bigr)^2\\
&\lesssim \Big(\sum_{k_1=0}^{\infty}2^{\frac{3k_1}{2}}\|f_{k_1}\|_{Z_{k_1}}^2\Big)\Big(\sum_{k_2=0}^{\infty}2^{\frac{3k_2}{2}}\|f_{k_2}\|_{Z_{k_2}}^2\Big)\\&\qquad\cdot\Big(\sum_{k_4=0}^{\infty} 2^{2(\epsilon-1)k_4}\Big) \Big(\sum_{k_3=0}^{\infty} 2^{(s-\frac{1}{2})k_3}\|f_{k_3}\|_{Z_{k_3}}\Big)^2\\
&\lesssim \|u_1\|_{W^{3/4}}^2\;\|u_2\|_{W^{3/4}}^2\;\|u_3\|_{W^{s}}^2,
\end{align*}
where we used that $k_3\les 2^{\epsilon k_3}$ and $0<\epsilon< 1$. The remaining regions are handled analogously, so the details are omitted. This completes the proof of the theorem.
\end{proof}

\subsection{Quintilinear estimate}
\begin{thm}\label{thm3}
For $s\geq 1/4$, we have
\begin{align*}
	\|\partial_x(\psi(t)u_1)\cdot(\psi(t)^4 u_2u_3u_4u_5)\|_{V^s}\lesssim \prod_{i=1}^{5}\|u_i\|_{W^s}.
\end{align*}
\end{thm}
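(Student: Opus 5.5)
Since the quintic term carries only one derivative, I would not use the resonance analysis of Lemma~\ref{4Zmultiplier} at all. Instead, I would put the whole output into the $X_k$ part of $Z_{k_0}$ and estimate it by duality with the free-solution estimates of Proposition~\ref{prop2.5}.

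Set $f_{k_i}=\eta_{k_i}(\xi)\F(\psi u_i)$ and let $k_0$ be the output frequency. By symmetry among $u_2,\dots,u_5$, I order $k_2\ge k_3\ge k_4\ge k_5$. The dyadic quantity to bound is
\[
2^{k_1}\bigl\|\eta_{k_0}(\xi)(\tau-\omega(\xi)+i)^{-1}f_{k_1}*\cdots*f_{k_5}\bigr\|_{X_{k_0}}.
\]
Write $F$ for the product $\FF(f_{k_1})\cdots\FF(f_{k_5})$ and decompose the output modulation dyadically in $j_0$. The weight $2^{-j_0/2}(1+2^{(j_0-3k_0)/2})$ is summable, so the $X_{k_0}$ norm is controlled by $\|P_{k_0}F\|_{L^2_{x,t}}$. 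More precisely, on the piece $j_0\le 3k_0$ the sum over $j_0$ of $2^{-j_0/2}$ converges. On the piece $j_0>3k_0$ the factor is $\lesssim 2^{-3k_0/2}$, which is even better.

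It therefore suffices to prove
\[
2^{k_1}\|\FF(f_{k_1})\cdots\FF(f_{k_5})\|_{L^2_{x,t}}\lesssim 2^{-\epsilon(\cdot)}\,2^{s k_0}\cdots
\]
with enough decay to sum, with all functions supported in $|t|\lesssim1$ thanks to $\psi$. The case split is by where the derivative falls relative to the maximal frequency $k_{\max}$ among $k_1,\dots,k_5$.

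\textbf{Case 1: $k_1$ is (comparable to) the maximum.} Put $\FF(f_{k_1})$ in $L^\infty_xL^2_t$ via \eqref{Lt2}; this gains $2^{-k_1}$ and cancels the derivative. Then put two of the remaining four factors in $L^4_xL^\infty_t$ via \eqref{Lx4}, each costing $2^{k_i/4}$. The other two go in $L^\infty_{x,t}$, bounded by Bernstein and the $L^2$ energy bound as $2^{k_i/2}\|f_{k_i}\|_{Z_{k_i}}$. Alternatively, for balance I would use $L^8_{x,t}$ Strichartz on more factors: the pairing $L^\infty_xL^2_t\cdot (L^8_{x,t})^4$ fails to give $L^2$, so I would use $L^\infty_xL^2_t\times (L^4_xL^\infty_t)^2\times L^\infty\times L^\infty$. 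Since $k_0\lesssim k_1$, this gives
\[
2^{sk_0}2^{(k_2+k_3)/4}2^{(k_4+k_5)/2}\prod\|f_{k_i}\|_{Z_{k_i}},
\]
and $2^{k_i/4}$, $2^{k_i/2}$ are absorbed by $W^{1/4}$ after Cauchy--Schwarz in the lower frequencies. The factor $2^{k_i/2}$ on the two lowest factors is too much for $s=1/4$, so I would refine this by using $L^8$ Strichartz on them together with interpolation. A cleaner choice is $\|\cdot\|_{L^\infty_xL^2_t}\cdot\|\cdot\|_{L^4_xL^\infty_t}^4$, which is not Hölder-admissible in $x$ ($1/4\cdot4=1$, giving $L^1_x$). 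That route suggests instead estimating into the $Y_{k_0}$ part when $k_0$ is high and the output modulation is $\le 2^{2k_0}$, exactly as in term $I$ of Proposition~\ref{hllh}. This gives
\[
2^{k_1}2^{-k_0}\|F\|_{L^1_xL^2_t}\lesssim 2^{k_1-k_0}\|\FF f_{k_1}\|_{L^\infty_xL^2_t}\prod_{i\ge2}\|\FF f_{k_i}\|_{L^4_xL^\infty_t}\lesssim 2^{-k_0}\prod_{i\ge2}2^{k_i/4}.
\]
This is summable for $s\ge1/4$. For modulation $\ge 2^{2k_0}$ I use the $X$ part, with $L^\infty_xL^2_t$ on $f_{k_1}$ and $(L^8_{x,t})^4$ on the rest after Hölder in time on the compact support, paying only harmless powers offset by $2^{-k_0}$.

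\textbf{Case 2: the derivative falls on a non-maximal factor.} Then $2^{k_1}\le 2^{k_2}$, and I put $f_{k_2}$ (the maximal one) in $L^\infty_xL^2_t$ instead. The same argument applies, with $2^{k_1}\lesssim 2^{k_1/4}2^{3k_2/4}$ absorbed by the smoothing gain $2^{-k_2}$. When the output $k_0\ll k_{\max}$ (high-high interactions into low output), there is ample room, since the $2^{sk_0}$ weight is small and the extra high factor contributes $2^{-sk_{\max}}$ decay.

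The main obstacle I expect is the low-output/low-modulation bookkeeping: making sure the $L^1_xL^2_t$ (the $Y_k$ route) or the $L^2$ (the $X_k$ route) Hölder pairings are admissible when $k_0$ is small ($\le 99$, where only $X_k$ is available). In that regime I would fall back on $(L^{10}_{x,t})$-type bounds obtained by interpolating the $L^8$ Strichartz estimate with Bernstein in $L^\infty$. The final summation over $k_0,\dots,k_5$ is then by Cauchy--Schwarz, as in Theorem~\ref{thm1}.
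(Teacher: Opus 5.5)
Your proposal has genuine gaps; the only step that works as written is the $Y_{k_0}$ estimate for $j_0\le 2k_0$, which matches the paper's term $I$.

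\textbf{The high-modulation tail.} Your reduction of the $X_{k_0}$ norm to $\|P_{k_0}F\|_{L^2_{x,t}}$ is false. For $j_0>3k_0$ the weight $2^{-j_0/2}(1+2^{(j_0-3k_0)/2})\sim 2^{-3k_0/2}$ does not decay in $j_0$. It is not ``even better'': you face an $\ell^1$ sum over infinitely many output modulation blocks. No $L^2_{x,t}$ bound on the product controls such a sum, and neither does any other Lebesgue-norm bound from Proposition~\ref{prop2.5}, since those estimates carry no information about the time regularity of $F$. The paper handles this in two stages.
\begin{itemize}
\item For $j_0\le 3k_1+20$ it sums the blocks directly, at the cost of a factor $(1+k_1)$.
\item For $j_0\ge 3k_1+21$ (term $III$) it uses $|\Omega_5|\lesssim 2^{3k_{\max}}$. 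This forces the two largest of $j_0,\dots,j_5$ to lie within $5$ of each other. Either $j_0\approx j_r$ for some input $r$, or two input modulations $j_r\approx j_l\ge j_0-5$ dominate. In both cases the factor $(1+2^{(j_r-3k_r)/2})$ in the inputs' $X_{k_r}$ norms pays for the sum over $j_0$.
\end{itemize}
This modulation bookkeeping is exactly what you proposed to skip, and it cannot be skipped.

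\textbf{The intermediate and low-output regimes.} Your H\"older pairings do not close here.
\begin{itemize}
\item $L^\infty_xL^2_t\times(L^8_{x,t})^4$ lands in $L^2_xL^1_t$, not $L^2_{x,t}$. H\"older on a compact time interval only lowers the time exponent, so it cannot repair this.
\item Your $L^{10}_{x,t}$ fallback for $k_0\le 99$ has no smoothing gain. It therefore loses the whole derivative $2^{k_1}$ when $k_1\approx k_2$ is large and the output is low.
\item The claimed ``ample room'' for high$*$high$\to$low is wrong for the pairings you list. Take all five inputs at frequency $2^K$ and $k_0=0$. Then $L^\infty_xL^2_t\times(L^4_xL^\infty_t)^2\times(L^\infty_{x,t})^2$ costs $2^{3K/2}$, against the $2^{5K/4}$ available at $s=1/4$. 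Using $(L^8_xL^\infty_t)^4$ via Bernstein costs the same.
\end{itemize}

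\textbf{The missing tool.} What you need is Bernstein at the output frequency: $\|P_{k_0}G\|_{L^2_{x,t}}\lesssim 2^{k_0/2}\|G\|_{L^1_xL^2_t}$. This lets the single admissible pairing $L^\infty_xL^2_t\times(L^4_xL^\infty_t)^4\to L^1_xL^2_t$ also control the $X_{k_0}$ part for every $k_0$, including $k_0\le 99$. Summing $2^{-j_0/2}(1+2^{(j_0-3k_0)/2})\,2^{k_0/2}$ over $2k_0<j_0\le 3k_1+20$ gives $(1+k_1)2^{-k_0/2}$. That is affordable for $s\ge 1/4$: in the example above it costs $2^{K}$ rather than $2^{3K/2}$.
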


\begin{proof} Let $f_{k_i}(\xi,\tau)=\eta_{k_i}(\xi)\F(\psi(t)u_i)(\xi,\tau)$. By symmetry, we may assume that $k_2\geq k_3\geq k_4\geq k_5$. Thus by the definition of the $V^s$ norm, it suffices to estimate
$$\sum_{k_2\geq k_3\geq k_4\geq k_5}\sum_{k_1\in \Z_+}2^{k_1}\|\eta_{k_6}(\xi)(\tau-\omega(\xi)+i)^{-1}f_{k_1}\ast f_{k_2}\ast f_{k_3}\ast f_{k_4}\ast f_{k_5}\|_{Z_{k_6}}.$$

From the support properties, we known that $|k_{\max}-k_{\rm sub}|\leq 5$, where
$k_{\max}$ and $k_{\rm sub}$ denote the two largest numbers among
$k_1,\ldots,k_6$. When $0\leq k_i\leq 99$, $i=1,2,3,4,5,6$, by $X_k$ norm, H\"older inequality and Proposition \ref{prop2.5}, it follows directly from the same 
$X_k$ argument below that
\begin{align*}
	&2^{k_1}\|\eta_{k_6}(\xi)(\tau-\omega(\xi)+i)^{-1}f_{k_1}\ast f_{k_2}\ast f_{k_3}\ast f_{k_4}\ast f_{k_5}\|_{Z_{k_6}}	\nonumber\\
    \les&\sum_{j_6\geq 0} 2^{-{\frac{j_6}{2}}}\bigl(1+2^{\frac{j_6-3k_6}{2}}\bigr)2^{\frac{k_6}{2}}\|\FF(f_{k_1})\|_{L_{x}^\infty L_{t}^2}\prod_{i=2}^{5}\|\FF (f_{k_i})\|_{L_{x}^4 L_{t}^\infty}\nonumber\\
    \les& \prod_{i=1}^{5}\|f_{k_i}\|_{Z_{k_i}}.
\end{align*}
We therefore consider the worst case
\begin{equation}\label{worst}
  (k_1,k_2)=(k_{\max},k_{\rm sub}),\qquad
k_1\geq100,\qquad |k_1-k_2|\leq5.  
\end{equation}
The other cases are easier. If $k_1$ is the largest frequency and $k_6$ is the second largest one, the factor obtained from the output modulation gives a better estimate.  If $k_1$ is not the largest input frequency, we put a largest input factor in $L_x^\infty L_t^2$, and the additional factor $2^{k_1-k_{\max}}\leq1$ gives the desired estimate in
the same way. We divide the worst case \eqref{worst} into three parts
\begin{align*}
	&2^{k_1}\|\eta_{k_6}(\xi)(\tau-\omega(\xi)+i)^{-1}f_{k_1}\ast f_{k_2}\ast f_{k_3}\ast f_{k_4}\ast f_{k_5}\|_{Z_{k_6}}\\\les &2^{k_1}\|\eta_{k_6}(\xi)\eta_{\leq 2k_6}(\tau-\omega(\xi))(\tau-\omega(\xi)+i)^{-1}f_{k_1}\ast f_{k_2}\ast f_{k_3}\ast f_{k_4}\ast f_{k_5}\|_{Z_{k_6}}	\\
    &+2^{k_1}\|\eta_{k_6}(\xi)\eta_{[2k_6+1,3k_1+20]}(\tau-\omega(\xi))(\tau-\omega(\xi)+i)^{-1}f_{k_1}\ast f_{k_2}\ast f_{k_3}\ast f_{k_4}\ast f_{k_5}\|_{Z_{k_6}}\\
    &+2^{k_1}\|\eta_{k_6}(\xi)\eta_{\geq 3k_1+21}(\tau-\omega(\xi))(\tau-\omega(\xi)+i)^{-1}f_{k_1}\ast f_{k_2}\ast f_{k_3}\ast f_{k_4}\ast f_{k_5}\|_{Z_{k_6}}\\
    :=&I+II+III
\end{align*}
By using the H\"older inequality, \eqref{Lt2} and \eqref{Lx4}, we have
\begin{align*}
	I
	\les&2^{k_1}2^{-k_6}\| \FF(f_{k_1}\ast f_{k_2}\ast f_{k_3}\ast f_{k_4}\ast f_{k_5}) \|_{L_x^1L_t^2} \nonumber\\\les&2^{k_1} 2^{-k_6}\|\FF(f_{k_1})\|_{L_x^{\infty} L_t^2}\prod_{i=2}^{5}\|\FF (f_{k_i})\|_{L_x^4L_t^{\infty}}
	\nonumber\\\les&2^{-k_6}2^{\frac{k_2+k_3+k_4+k_5}{4}} \prod_{i=1}^{5}\|f_{k_i}\|_{Z_{k_i}}.
\end{align*}  	
By $X_k$ norm, H\"older's inequality, \eqref{Lt2} and \eqref{Lx4}, we have
\begin{align*}
	II\les&\sum_{j_6=2k_6+1}^{3k_1+20}2^{k_1}2^{-\frac{j_6}{2}}\bigl(1+2^{\frac{j_6-3k_6}{2}}\bigr)2^{\frac{k_6}{2}}\|\FF(f_{k_1} \ast f_{k_2} \ast f_{k_3}\ast f_{k_4}\ast f_{k_5}) \|_{L_x^1 L_{t}^2}\nonumber\\
    \les&(1+k_1)2^{-\frac{k_6}{2}} 2^{k_1} \|\FF(f_{k_1})\|_{L_x^\infty L_t^2}
\prod_{i=2}^5\|\FF(f_{k_i})\|_{L_x^4L_t^\infty}\nonumber\\
\les&(1+k_1)2^{-\frac{k_6}{2}} 2^{\frac{k_2+k_3+k_4+k_5}{4}}\prod_{i=1}^{5}\|f_{k_i}\|_{Z_{k_i}}.
\end{align*}  	
For $III$, $j_6\geq3k_1+21$, let
$f_{k_i,j_i}=\eta_{j_i}(\tau-\omega(\xi))f_{k_i}$.  From the support of
the convolution, we have $|\Omega_5(\xi_1,\ldots,\xi_5)|\lesssim2^{3k_1}$.
It follows that $|j_{\max}-j_{\rm sub}|\leq5$, where
$j_{\max}$ and $j_{\rm sub}$ denote the two largest numbers among
$j_1,\ldots,j_6$.  For the high-modulation part of $f_{k_i}$, set
$$
b_{i,j_i}=2^{\frac{j_i}{2}}\bigl(1+2^{\frac{j_i-3k_i}{2}}\bigr)
\|f_{k_i,j_i}\|_{L^2}.
$$
For a fixed modulation block, the proofs of \eqref{Lt2} and
\eqref{Lx4} give the more precise estimates
\begin{align*}
\|\FF(f_{k_i,j_i})\|_{L_x^\infty L_t^2}
&\lesssim 2^{-k_i}2^{\frac{j_i}{2}}\|f_{k_i,j_i}\|_{L^2}
=\frac{2^{-k_i}b_{i,j_i}}
{1+2^{(j_i-3k_i)/2}},\\
\|\FF(f_{k_i,j_i})\|_{L_x^4L_t^\infty}
&\lesssim 2^{\frac{k_i}{4}}2^{\frac{j_i}{2}}\|f_{k_i,j_i}\|_{L^2}
=\frac{2^{k_i/4}b_{i,j_i}}
{1+2^{(j_i-3k_i)/2}}.
\end{align*}
We divide the high-modulation contribution into two cases.  First,
assume that $j_6$ is one of the two largest
modulations.  Then, for some $r\in\{1,\ldots,5\}$,
$|j_6-j_r|\leq5$.  Denote the contribution of this case by
${III}_1$.  If $r=1$, we use the first refined estimate above for
$f_{k_1,j_1}$ and the $L_x^4L_t^\infty$ estimate for the other four
factors.  If $r\ne1$, we keep $f_{k_1}$ in
$L_x^\infty L_t^2$ and use the second refined estimate for
$f_{k_r,j_r}$.  Therefore, by Bernstein's inequality and H\"older's
inequality, in both cases we obtain
\begin{align*}
{III}_1
&\lesssim 2^{\frac{k_2+k_3+k_4+k_5}{4}}
\sum_{r=1}^5
\sum_{\substack{j_6\geq3k_1+21,\\ j_r\geq3k_1+15,\\
|j_6-j_r|\leq5}}
2^{-\frac{j_6}{2}}\bigl(1+2^{\frac{j_6-3k_6}{2}}\bigr)2^{\frac{k_6}{2}}
\frac{b_{r,j_r}}{1+2^{(j_r-3k_r)/2}}
\prod_{i\ne r}\|f_{k_i}\|_{Z_{k_i}}\nonumber\\
&\lesssim 2^{-k_6}2^{\frac{k_2+k_3+k_4+k_5}{4}}
\prod_{i=1}^5\|f_{k_i}\|_{Z_{k_i}},
\end{align*}
where we used $k_r\leq k_1$, $j_r\geq3k_1+15$.

Second, assume that the two largest modulations are both input
modulations, say $|j_r-j_l|\leq5$ for some $1\leq r<l\leq5$.  Put
$J=\max\{j_r,j_l\}$.  In this case $3k_1+21\leq j_6\leq J+5$.
Denoting this contribution by ${III}_2$, we have
\begin{align*}
{III}_2
&\lesssim 2^{-k_6}2^{\frac{k_2+k_3+k_4+k_5}{4}}
\sum_{1\leq r<l\leq5}
\sum_{\substack{j_r,j_l\geq3k_1+15,\\|j_r-j_l|\leq5}}
\frac{1+J-3k_1}
{\bigl(1+2^{(j_r-3k_r)/2}\bigr)
 \bigl(1+2^{(j_l-3k_l)/2}\bigr)}\nonumber\\
&\qquad\cdot b_{r,j_r}b_{l,j_l}
\prod_{i\ne r,l}\|f_{k_i}\|_{Z_{k_i}}\nonumber\\
&\quad\lesssim 2^{-k_6}2^{\frac{k_2+k_3+k_4+k_5}{4}}
\prod_{i=1}^5\|f_{k_i}\|_{Z_{k_i}},
\end{align*}
where we used $k_r,k_l\leq k_1$ and $j_r,j_l\geq J-5$,
$$
\frac{1+J-3k_1}
{\bigl(1+2^{(j_r-3k_r)/2}\bigr)
 \bigl(1+2^{(j_l-3k_l)/2}\bigr)}
\lesssim (1+J-3k_1)2^{-(J-3k_1)}\lesssim1.
$$
Combining the above three parts, we obtain
\begin{align}\label{quint-low-output}
&2^{k_1}\big\|\eta_{k_6}(\xi)(\tau-\omega(\xi)+i)^{-1}
f_{k_1}*\cdots*f_{k_5}\big\|_{Z_{k_6}}\nonumber\\
&\quad\lesssim(1+k_1)2^{-\frac{k_6}{2}} 2^{\frac{k_2+k_3+k_4+k_5}{4}}
\prod_{i=1}^5\|f_{k_i}\|_{Z_{k_i}}.
\end{align}
It remains to sum the dyadic frequencies.  Set
$$
a_{i,k}=2^{sk}\|\eta_k(\xi)\F(\psi(t)u_i)\|_{Z_k}.
$$
Since $s\geq1/4$ and $k_6\leq k_1+C$, \eqref{quint-low-output} implies
\begin{align*}
&2^{sk_6+k_1}\big\|\eta_{k_6}(\xi)(\tau-\omega(\xi)+i)^{-1}
f_{k_1}*\cdots*f_{k_5}\big\|_{Z_{k_6}}\\
&\quad\lesssim (1+k_1)2^{-\frac{k_6}{2}}
2^{s(k_6-k_1)}\prod_{i=1}^5a_{i,k_i}\\
&\quad\lesssim (1+k_1)2^{-c_sk_1}\prod_{i=1}^5a_{i,k_i},
\end{align*}
where $c_s=\min\{s,1/2\}>0$. Hence all the
dyadic sums converge. Applying Cauchy--Schwarz inequality, we conclude that
\begin{align*}
\|\partial_x(\psi(t)u_1)\cdot(\psi(t)^4u_2u_3u_4u_5)\|_{V^s}
\lesssim\prod_{i=1}^5\|u_i\|_{W^s}.
\end{align*}
This completes the proof.
\end{proof}

\subsection{Local well-posedness}
\begin{proof}[Proof of Theorem \ref{thm-lwp}] For $s\geq 3/4$ and the initial data $u_0\in H^s(\R)$, we define the following operator and set:
\begin{align*}
	&\varPhi(u)=\psi(t)W(t)u_0+\psi(t)\int_{0}^{t}W(t-t')\big(\mathcal N_3(u,\bar u) +\mathcal N_5(u,\bar u)\big)(t')dt',\\
	&\mathcal{B}=\{u\in W^s:\|u\|_{W^s}\leq 2C\|u_0\|_{H^s}\}.
\end{align*}
From \eqref{linear1}-\eqref{linear2}, and Theorem \ref{thm1}-\ref{thm3}, we have
\begin{align*}
	\|\varPhi(u)\|_{W^s}&\leq C \|u_0\|_{H^s}+C\|\mathcal N_3(u,\bar u) +\mathcal N_5(u,\bar u)\|_{V^s}\\&\leq C \|u_0\|_{H^s}+C(\|u\|_{W^s}^3+\|u\|_{W^s}^5)\\&\leq C\|u_0\|_{H^s}+C(4C^2\|u_0\|_{H^s}^2+16C^4\|u_0\|_{H^s}^4)\|u\|_{W^s}.
\end{align*}
Take $\|u_0\|_{H^s}$ sufficiently small such that $4C^3\|u_0\|_{H^s}^2+16C^5\|u_0\|_{H^s}^4\leq 1/4$, we obtain
\begin{align*}
		\|\varPhi(u)\|_{W^s}\leq 2C\|u_0\|_{H^s},
\end{align*}
which means $\varPhi(u)\subset \mathcal{B}$. Let $u,v\in \mathcal{B}$ with the same initial data $u_0$, then
\begin{align*}
	\|\varPhi(u)-\varPhi(v)\|_{W^s}\leq C\|u-v\|_{W^s}(\|u\|_{W^s}^2+\|v\|_{W^s}^2+\|u\|_{W^s}^4+\|v\|_{W^s}^4)\leq \frac12\|u-v\|_{W^s}.
\end{align*}
Therefore, $\varPhi$ defines a contraction mapping on $\mathcal{B}$. By the contraction mapping principle, it follows that there exists a unique solution $u \in \mathcal{B}$ such that $\varPhi(u) = u$.  

We next explain why it suffices to impose a smallness assumption only on
the $L^2$ norm of the initial data. Equation \eqref{eq0} is invariant
under the scaling
$$
u(x,t)\mapsto u_\lambda(x,t)
:=\lambda^{1/2}u(\lambda x,\lambda^3t),
\qquad \lambda>0,
$$
under which the initial data transform as
$$
u_{0,\lambda}(x)=\lambda^{1/2}u_0(\lambda x).
$$
A direct computation gives
$$
\|u_{0,\lambda}\|_{\dot H^s}
=\lambda^s\|u_0\|_{\dot H^s},
\qquad
\|u_{0,\lambda}\|_{L^2}
=\|u_0\|_{L^2}.
$$
Consequently,
$$
\|u_{0,\lambda}\|_{H^s}^2
\lesssim
\|u_0\|_{L^2}^2
+\lambda^{2s}\|u_0\|_{\dot H^s}^2.
$$
Thus, provided that
$$
\|u_0\|_{L^2}\ll1,
$$
we may choose $\lambda>0$ sufficiently small (for $s>0$) so that the
rescaled initial data satisfy
\begin{equation}\label{333}
\|u_{0,\lambda}\|_{H^s}\leq \varepsilon\ll1.
\end{equation}
We may therefore carry out the multilinear contraction argument for the
rescaled equation under the small data condition \eqref{333}, and then
recover the solution corresponding to the original initial data by
scaling back. This explains why the smallness assumption in
Theorem~\ref{thm-lwp} is required only on the scaling invariant quantity
$\|u_0\|_{L^2}$. This completes the proof.
\end{proof}

\section{A priori estimates in Sobolev spaces}
In this section, we establish conservation laws and derive a priori
estimates in Sobolev spaces for Equation \eqref{maineq}. To justify the
convergence of the series defining $\alpha(\kappa;u(t))$ in \eqref{alpha},
as well as term-by-term differentiation, we first need suitable estimates
for its leading term and the remainder. For convenience, we define 
$$  \Lambda(\kappa;u)=(\kappa-\partial_x)^{-1/2}u(\kappa+\partial_x)^{-1/2},
    \qquad \Gamma(\kappa;u)=(\kappa+\partial_x)^{-1/2}\bar u(\kappa-\partial_x)^{-1/2}.
$$
From Lemma 4.1 and Lemma 4.2 in \cite{KillipVisanZhang2018}, and Proposition 3.1 in \cite{ShanChenLuWang2024}, we have
\begin{lem}\label{lem4-1}
	Let $\kappa>0$ and $u\in\S(\mathbb{R})$. Then
    \begin{itemize}
\item[(i)] the imaginary part of the leading term in $\alpha(\kappa;u(t))$ satisfies 
	\begin{align}\label{alpha_1}
		\alpha_1(\kappa;u):={\rm Im} \big(i\kappa\, {\rm tr}\{(\kappa-\partial_x)^{-1}u(\kappa+\partial_x)^{-1}\bar{u}\}\big)=\int \frac{2\kappa^2|\hat{u}(\xi)|^2}{\xi^2+4\kappa^2}d\xi.
	\end{align}
\item[(ii)] the Hilbert-Schmidt norm of the remainder is as follows
\begin{align}\label{remainder1}
\|\Lambda(\kappa;u)\|^2_{\mathfrak I_2(\mathbb{R})}&=\|\Gamma(\kappa;u)\|^2_{\mathfrak I_2(\mathbb{R})}\sim \int_{\mathbb{R}} \log\bigl(4 + \tfrac{\xi^2}{\kappa^2}\bigr)\frac{|\hat u(\xi)|^2\,d\xi}{\sqrt{4\kappa^2 + \xi^2}}.
\end{align}
\end{itemize}
\end{lem}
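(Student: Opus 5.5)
We compute both quantities exactly on the Fourier side, using the explicit symbols of the resolvents. The operator $(\kappa-\partial_x)^{-1}$ is the Fourier multiplier $(\kappa-i\xi)^{-1}$ and $(\kappa+\partial_x)^{-1}$ is $(\kappa+i\eta)^{-1}$. Multiplication by $u$ has Fourier kernel $\frac{1}{2\pi}\hat u(\xi-\eta)$ in the normalization where Plancherel carries a factor $2\pi$. Hence every operator appearing here is an integral operator on the Fourier side with an explicit kernel. For $u\in\S(\R)$ and $\kappa>0$ all the integrals below are absolutely convergent, or become so after one partial-fraction step, which justifies the manipulations. Throughout, $\langle s\rangle:=(1+s^2)^{1/2}$.

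For (i), we write
$$
{\rm tr}\{(\kappa-\partial_x)^{-1}u(\kappa+\partial_x)^{-1}\bar u\}=c\int\!\!\int \frac{|\hat u(\zeta)|^2}{(\kappa-i(\eta+\zeta))(\kappa+i\eta)}\,d\eta\,d\zeta,
$$
with a normalizing constant $c$, after substituting $\zeta=\xi-\eta$ and using $\hat{\bar u}(-\zeta)=\overline{\hat u(\zeta)}$. The $\eta$-integrand decays like $|\eta|^{-2}$, so the inner integral converges absolutely. We evaluate it by residues: close the contour in the half plane containing only the pole $\eta=i\kappa$ of $(\kappa+i\eta)^{-1}$. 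Equivalently, split by partial fractions and take symmetric limits, which gives
$$
\int \frac{d\eta}{(\kappa-i(\eta+\zeta))(\kappa+i\eta)}=\frac{2\pi}{2\kappa-i\zeta}.
$$
Multiplying by $i\kappa$ and taking imaginary parts gives ${\rm Im}\,\frac{i\kappa}{2\kappa-i\zeta}=\frac{2\kappa^2}{4\kappa^2+\zeta^2}$. With the constant fixed by the Plancherel normalization, this yields \eqref{alpha_1}.

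For (ii), the Fourier kernel of $\Lambda(\kappa;u)$ is $(\kappa-i\xi)^{-1/2}\hat u(\xi-\eta)(\kappa+i\eta)^{-1/2}$, up to the constant. Its Hilbert--Schmidt norm squared is therefore
$$
c\int|\hat u(\zeta)|^2\,K(\zeta)\,d\zeta,\qquad K(\zeta)=\int\frac{d\eta}{\sqrt{\kappa^2+(\eta+\zeta)^2}\sqrt{\kappa^2+\eta^2}}.
$$
For $\Gamma(\kappa;u)$ the same computation applies with $\hat u(\zeta)$ replaced by $\hat{\bar u}(\zeta)=\overline{\hat u(-\zeta)}$ and the roles of the two multipliers swapped. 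Since $K$ is even in $\zeta$ (substitute $\eta\mapsto-\eta-\zeta$), the two Hilbert--Schmidt norms coincide exactly.

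The real content is the asymptotic $K(\zeta)\sim \log(4+\zeta^2/\kappa^2)/\sqrt{4\kappa^2+\zeta^2}$, and I expect this two-sided bound to be the main (though elementary) obstacle. Scaling $\eta=\kappa s$, $z=\zeta/\kappa$ gives $K(\zeta)=\kappa^{-1}F(z)$ with $F(z)=\int\langle s+z\rangle^{-1}\langle s\rangle^{-1}ds$. We treat two ranges of $z$.

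\begin{itemize}
\item For $|z|\le 2$, the integrand is comparable to $\langle s\rangle^{-2}$, so $F(z)\sim1$. This matches the claim, since there $\log(4+z^2)/\sqrt{4+z^2}\sim1$.
\item For $|z|\ge2$, we split the line into three regions. On $|s|\le|z|/2$ the integrand is $\sim|z|^{-1}\langle s\rangle^{-1}$, which integrates to $\sim|z|^{-1}\log|z|$. The region $|s+z|\le|z|/2$ contributes the same by symmetry. On the remaining set both factors are $\gtrsim|s|$ and $\gtrsim|z|$, so the contribution is $\lesssim\int_{|s|\gtrsim|z|}s^{-2}ds\lesssim|z|^{-1}$. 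Altogether $F(z)\sim\log|z|/|z|$.
\end{itemize}

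Undoing the scaling gives $K(\zeta)\sim\log(4+\zeta^2/\kappa^2)/\sqrt{4\kappa^2+\zeta^2}$, and hence \eqref{remainder1}.
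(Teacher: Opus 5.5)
Your proof is correct. In (i), the residue (or partial-fraction) evaluation gives the trace as exactly $\int|\hat u(\zeta)|^2(2\kappa-i\zeta)^{-1}\,d\zeta$, and in (ii), the three-region analysis of $F(z)=\int\langle s+z\rangle^{-1}\langle s\rangle^{-1}\,ds$ gives the two-sided bound. The paper gives no proof of its own; it cites Killip--Vi\c{s}an--Zhang (Lemmas 4.1--4.2) and Shan et al.\ (Proposition 3.1), which rest on this same direct Fourier-side computation. Your self-contained version therefore follows the same approach, with one small slip: evenness of $K$ follows from $\eta\mapsto-\eta$, whereas $\eta\mapsto-\eta-\zeta$ only swaps the two factors of $K(\zeta)$.
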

By Lemma~\ref{lem4-1}, for sufficiently large $\kappa$, the series defining
$\alpha(\kappa;u)$ converges and can be differentiated term by term. We now establish the conservation of the
perturbation determinant $\alpha(\kappa;u(t))$.
\begin{prop}\label{conser0}
	Let $u \in C^{\infty}(\mathbb{R}, \mathcal{S})$ be a Schwartz solution to \eqref{maineq}. Then, choosing $\kappa$ sufficiently large so that the right-hand side of
\eqref{remainder1} is less than $1/2$, we obtain
	\begin{align*}
		\frac{d}{dt}\alpha(\kappa;u(t))=0.
	\end{align*}
\end{prop}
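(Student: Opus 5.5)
Our plan is to differentiate the series \eqref{alpha} term by term and show that each homogeneous piece of $\frac{d}{dt}\alpha$ cancels, using the Lax structure and the cyclicity of the trace. Write $R_0=(\pa_x-\kappa)^{-1}$, $\tilde R_0=(\pa_x+\kappa)^{-1}$, and set $A=R_0u\tilde R_0\bar u$, so that $\alpha=\sum_m\frac{(-i\kappa)^m}{m}{\rm tr}(A^m)$. Since $u$ is a Schwartz solution, $u_t\in\S$ for each $t$. By Lemma~\ref{lem4-1}(ii) and the choice of $\kappa$, we have $\|\Lambda\|_{\mathfrak I_2}^2<1/2$, and the same holds for $\Gamma$. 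After the symmetric factorization, ${\rm tr}(A^m)$ is a trace of products of the Hilbert--Schmidt operators $\Lambda,\Gamma$ (with bounded factors of size $\kappa^{-1}$). The series and its $t$-derivative therefore converge geometrically, uniformly on compact time intervals, because $\|\pa_t\Lambda\|_{\mathfrak I_2}$ is controlled by the same quadratic form applied to $u_t$. This justifies
$$\frac{d}{dt}\alpha=\sum_{m\ge1}(-i\kappa)^m{\rm tr}\bigl(A^{m-1}(R_0u_t\tilde R_0\bar u+R_0u\tilde R_0\bar u_t)\bigr).$$

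The cancellation is cleanest at the level of the operator $L_\kappa(u)$. Let $L_0={\rm diag}(\pa_x-\kappa,\pa_x+\kappa)$ and $L_\kappa=L_0+N(u)$. Then $\alpha=-\log\det(1+L_0^{-1}N)$, and the trace-class regularized identity gives $\frac{d}{dt}\alpha=-{\rm tr}\bigl((1+L_0^{-1}N)^{-1}L_0^{-1}N_t\bigr)=-{\rm tr}(L_\kappa^{-1}N_t)$. Here $L_\kappa^{-1}$ exists by the Neumann series, since $\|L_0^{-1}N\|$ is small in the symmetrized $\mathfrak I_2$ sense. From the time part of the Kaup--Newell Lax pair for $n=2$, i.e.\ the zero-curvature relation $U_t-V_x+[U,V]=0$ converted through $G_\lambda$ into the $\Phi$-normalization, we get a $2\times2$ matrix $\mathcal V_\kappa(u)$, polynomial in $u,\bar u$ and their derivatives up to order two and in $\kappa$. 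With it,
$$N_t=\pa_t L_\kappa=[\mathcal V_\kappa,L_\kappa],$$
understood with $\mathcal V_\kappa$ acting as a multiplication operator. Granting this, ${\rm tr}(L_\kappa^{-1}[\mathcal V_\kappa,L_\kappa])={\rm tr}(L_\kappa^{-1}\mathcal V_\kappa L_\kappa-\mathcal V_\kappa)$, which vanishes by cyclicity once the expression is made trace class.

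The main obstacle is that $\mathcal V_\kappa$ is not small and does not decay: it contains a $\kappa^3\sigma_3$ part coming from the $\lambda^6$ term. Cyclicity cannot be applied naively to unbounded or non-trace-class products. To handle this, we split $\mathcal V_\kappa=\mathcal V^0+\mathcal V^1(u)$, where $\mathcal V^0$ is constant diagonal and commutes with $L_0$, and $\mathcal V^1$ is Schwartz-decaying. Then
$$[\mathcal V_\kappa,L_\kappa]=[\mathcal V^0,N]+[\mathcal V^1,L_0]+[\mathcal V^1,N].$$
We verify the identity ${\rm tr}(L_\kappa^{-1}[\mathcal V,L_\kappa])=0$ by expanding $L_\kappa^{-1}=\sum_{\ell}(-L_0^{-1}N)^\ell L_0^{-1}$. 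In each resulting term, every product contains at least two factors $u$ or $\mathcal V^1$ sandwiched between resolvents, so it is trace class, and cyclicity can be applied legitimately. Equivalently, one can check order by order in homogeneity that the degree-$2m$ contributions telescope. The alternative route is to verify directly that $\mathcal V_\kappa$ reproduces \eqref{maineq}, i.e.\ that the off-diagonal entries of $[\mathcal V_\kappa,L_\kappa]$ equal $\kappa u_t$ and $-i\bar u_t$ (using that $r=-\bar u$ is preserved). This is a routine but lengthy symbolic computation, which we would carry out by starting from the coupled system \eqref{coupled-n2} with $\alpha_2=-4$.
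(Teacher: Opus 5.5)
Your route differs from the paper's. The paper never uses the time matrix. It substitutes \eqref{maineq} into the differentiated series and proves $A_1=0$, $A_2+B_1=0$ and $A_m+B_{m-1}+C_{m-2}=0$ by hand, using commutator identities such as $(|u|^2u_x)_x=(\pa_x-\kappa)(|u|^2u_x)-(|u|^2u_x)(\pa_x+\kappa)+2\kappa|u|^2u_x$. Packaging this as $\pa_tL_\kappa=[\mathcal V_\kappa,L_\kappa]$ is cleaner and works for any flow in the hierarchy. But you never write $\mathcal V_\kappa$ down, so all the algebra the paper carries out is outsourced to an unstated Lax matrix. That is acceptable only if you take an explicit Lax pair for \eqref{coupled-n2} from the literature and convert it to the $\Phi$-normalization.

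The genuine gap is at the step you yourself flag as the main obstacle. Put $X=L_0^{-1}N$ and $D=L_0^{-1}[\mathcal V^1,L_0]$; both are Hilbert--Schmidt, and $L_0^{-1}[\mathcal V^1,L_\kappa]=D+[\mathcal V^1,X]+DX$.

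The $\mathcal V^0$-part is harmless. Since $\mathcal V^0=c\,\sigma_3$, it reduces to a multiple of ${\rm tr}\bigl(\sigma_3X^2(1-X^2)^{-1}\bigr)$, which is $0$ by Hilbert--Schmidt cyclicity.

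For the $\mathcal V^1$-part, the legitimate cyclic moves (Hilbert--Schmidt times Hilbert--Schmidt, or trace class times bounded) only bring you to
\begin{equation*}
\frac{d}{dt}\alpha(\kappa;u)={\rm tr}(XD).
\end{equation*}
This trace-class quantity is not killed by cyclicity. Any further cycling requires splitting $D=L_0^{-1}\mathcal V^1L_0-\mathcal V^1$, which produces Volterra-type operators such as $L_0^{-1}N\mathcal V^1$ that are not trace class. Your $\ell=0$ terms $L_0^{-1}[\mathcal V^1,L_0]$ and $L_0^{-1}[\mathcal V^1,N]$ have the same defect: each contains a single multiplication factor, contrary to your claim that every term has two. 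A direct Fourier computation gives
\begin{equation*}
{\rm tr}(XD)=\int_{\R}\bigl(\kappa u\,\mathcal V_{21}+i\bar u\,\mathcal V_{12}\bigr)\,dx,
\end{equation*}
whose integrand is a nonzero local density in general. It integrates to zero only because the $(1,1)$ entry of the zero-curvature equation reads $\kappa u\,\mathcal V_{21}+i\bar u\,\mathcal V_{12}=-\pa_x\mathcal V_{11}$, and $\mathcal V_{11}-\mathcal V^0_{11}\in\S$.

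Add this step, together with an explicit or cited $\mathcal V_\kappa$, and your argument closes. As written, ``vanishes by cyclicity once the expression is made trace class'' does not prove the proposition.
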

\begin{proof} For convenience, we denote $R_+ := (\partial_x + \kappa)^{-1}$ and $R_- := (\partial_x - \kappa)^{-1}$. Since the series defining $\alpha(\kappa;u(t))$ converges and can be differentiated term by term, and using the equation \eqref{maineq}, we obtain
	\begin{align*}
		\frac{d}{dt}\alpha(\kappa;u)=\,&\sum_{m=1}^{\infty}(-i\kappa)^m{\rm tr}\{(R_-uR_+\bar{u} )^{m-1}(R_-u_tR_+\bar{u}+R_-uR_+\bar{u}_t)\}\\=\,&\sum_{m=1}^{\infty}(-i\kappa)^m{\rm tr}\biggl\{(R_-uR_+\bar{u} )^{m-1}\bigg[R_-\Big(u_{xxx}+3i(|u|^2u_x)_x-\frac{3}{2}(|u|^4u)_x\Big)R_+\bar{u}\\&+R_-uR_+\Big (\bar{u}_{xxx}-3i(|u|^2\bar{u}_x)_x-\frac{3}{2}(|u|^4\bar{u})_x\Big )\bigg]\biggr\}\\=:&A+B+C.
	\end{align*}
	where
	\begin{align*}
		A&=\sum_{m=1}^{\infty}(-i\kappa)^m{\rm tr}\{(R_-uR_+\bar{u} )^{m-1}(R_-u_{xxx}R_+\bar{u}+R_-uR_+\bar{u}_{xxx})\}=:\sum_{m=1}^{\infty}A_m,\\
		B&=\sum_{m=1}^{\infty}3i(-i\kappa)^m{\rm tr}\{(R_-uR_+\bar{u} )^{m-1}(R_-(|u|^2u_x)_xR_+\bar{u}-R_-uR_+(|u|^2\bar{u}_x)_x)\}=:\sum_{m=1}^{\infty}B_m,\\
		C&=\sum_{m=1}^{\infty}-\frac{3}{2}(-i\kappa)^m{\rm tr}\{(R_-uR_+\bar{u} )^{m-1}(R_-(|u|^4u)_xR_+\bar{u}+R_-uR_+(|u|^4\bar{u})_x)\}=:\sum_{m=1}^{\infty}C_m.
	\end{align*}
	It suffices to show that
	\begin{align*}
		A_1=0,\qquad A_2+B_1=0,
	\end{align*}
	and
	\begin{align}\label{cancel1}
		A_m+B_{m-1}+C_{m-2}=0, \qquad m\geq 3.
	\end{align}
	
First, we consider $A_1$. From the Fourier transform, we have
\begin{align*}
	{\rm tr}(R_+\bar{u}R_-u_{xxx})=-{\rm tr}(R_+\bar{u}_{xxx}R_-u).
\end{align*}
Combining this identity with the cyclic property of the trace, $\operatorname{tr}(AB)=\operatorname{tr}(BA)$, we can obtain that
\begin{align*}
	A_1=-i\kappa {\rm tr}\{R_+\bar{u}R_-u_{xxx}+R_+\bar{u}_{xxx}R_-u\}=0.
\end{align*}

Next, we show that for $m \geqslant 3$,  \eqref{cancel1} holds. Since $u_x=\partial_x u-u\partial_x$, we can write
	\begin{align*}
		u_{xxx}&=-u(\partial_x^3-3\kappa\partial_x^2+3\kappa^2\partial_x+3\kappa^3)+(\partial_x^3+3\kappa\partial_x^2+3\kappa^2\partial_x-3\kappa^3)u\\&\quad-(\partial_x-\kappa)(3u_x+6\kappa u)(\partial_x+\kappa),\\
		\bar{u}_{xxx}&=(\partial_x^3-3\kappa\partial_x^2+3\kappa^2\partial_x+3\kappa^3)\bar{u}-\bar{u}(\partial_x^3+3\kappa\partial_x^2+3\kappa^2\partial_x-3\kappa^3)\\&\quad-(\partial_x+\kappa)(3\bar{u}_x-6\kappa\bar{u})(\partial_x-\kappa).
	\end{align*}
The second identity follows from the first by complex conjugation, together with the replacement $\kappa\mapsto -\kappa$. Substituting these identities into $A_{m}$, the first two terms in each identity cancel pairwise. The remaining terms, together with the cyclic property of the trace, yield
	\begin{align}\label{AM}
		A_m=\,&-3(-i\kappa)^m{\rm tr}\{(R_-uR_+\bar{u} )^{m-2}(R_-uR_+\bar{u}^2(u_x+2\kappa u)+R_-u(\bar{u}_x-2\kappa \bar{u})uR_+\bar{u})\}\no\\
		=\,&-3(-i\kappa)^m{\rm tr}\{(R_-uR_+\bar{u} )^{m-2}(R_-uR_+\bar{u}^2u_x+R_-u^2\bar{u}_xR_+\bar{u})\}\no\\
		&-6\kappa(-i\kappa)^m{\rm tr}\{(R_-uR_+\bar{u} )^{m-2}(R_-uR_+|u|^2\bar{u}-R_-|u|^2uR_+\bar{u})\}.
	\end{align}
	
For $B_m$, we note that
	\begin{align*}
	&(|u|^2u_x)_x=(\partial_x-\kappa)(|u|^2u_x)-(|u|^2u_x)(\partial_x+\kappa)+2\kappa|u|^2u_x,\\
	&(|u|^2\bar{u}_x)_x=(\partial_x+\kappa)(|u|^2\bar{u}_x)-(|u|^2\bar{u}_x)(\partial_x-\kappa)-2\kappa|u|^2\bar{u}_x.
\end{align*}
Substituting the first term of $(|u|^2u_x)_x$ and the second term of $(|u|^2\bar{u}_x)_x$ into $B_m$, and cycling the trace, we obtain
\begin{align}\label{Bm1}
	&{\rm tr}\{(R_-uR_+\bar{u})^{m-1}(R_-(\partial_x-\kappa)|u|^2u_xR_+\bar{u}+R_-uR_+|u|^2\bar{u}_x(\partial_x-\kappa))\}\no\\=\,&\ {\rm tr}\{(R_-uR_+\bar{u})^{m-2}[R_-uR_+(|u|^2\bar{u}u_x+|u|^2\bar{u}_xu)R_+\bar{u}]\}\no\\
	=\,&\ \frac{1}{2}{\rm tr}\{(R_-uR_+\bar{u})^{m-2}[R_-uR_+(|u|^4)_xR_+\bar{u}]\}.
\end{align}
Similarly, substituting the second term of $(|u|^2 u_x)_x$ and the first term of $(|u|^2 \bar{u}_x)_x$ into $B_m$, we obtain
\begin{align}\label{Bm2}
	&{\rm tr}\{(R_-uR_+\bar{u})^{m-1}(R_-|u|^2u_x(\partial_x+\kappa)R_+\bar{u}+R_-uR_+(\partial_x+\kappa)|u|^2\bar{u}_x)\}\no\\=\,&{\rm tr}\{(R_-uR_+\bar{u})^{m-1}(R_-|u|^2u_x\bar{u}+R_-u|u|^2\bar{u}_x)\}\no\\=\,&\frac{1}{2}{\rm tr}\{(R_-uR_+\bar{u})^{m-1}(R_-(|u|^4)_x)\}.
\end{align}
Moreover, using
\begin{align*}
	(|u|^4)_x=(\partial_x+\kappa)|u|^4-|u|^4(\partial_x+\kappa),\qquad (|u|^4)_x=(\partial_x-\kappa)|u|^4-|u|^4(\partial_x-\kappa),
\end{align*}
and plugging these identities into \eqref{Bm1} and \eqref{Bm2}, together with the cyclic property of the trace, we obtain
\begin{align*}
	&\text{LHS}\eqref{Bm1}-\text{LHS}\eqref{Bm2}\\=\,&\frac{1}{2}{\rm tr}\{(R_-uR_+\bar{u})^{m-2}(R_-u|u|^4R_+\bar{u}-R_-uR_+|u|^4\bar{u}-R_-uR_+|u|^4\bar{u}+R_-|u|^4uR_+\bar{u})\}\\=\,&{\rm tr}\{(R_-uR_+\bar{u})^{m-2}(R_-u|u|^4R_+\bar{u}-R_-uR_+|u|^4\bar{u})\}.
\end{align*}
Therefore, 
\begin{align}\label{BM}
	B_{m}=\,&3i(-i\kappa)^{m}{\rm tr}\{(R_-uR_+\bar{u} )^{m-2}(R_-u|u|^4R_+\bar{u}-R_-uR_+|u|^4\bar{u})\}\no\\
	&-6(-i\kappa)^{m+1} {\rm tr}\{(R_-uR_+\bar{u})^{m-1}(R_-|u|^2u_xR_+\bar{u}+R_-uR_+|u|^2\bar{u}_x)\}
\end{align}

For $C_m$, we rewrite
	\begin{align*}
		&(|u|^4u)_x=(\partial_x-\kappa)(|u|^4u)-(|u|^4u)(\partial_x+\kappa)+2\kappa|u|^4u,\\
		&(|u|^4\bar{u})_x=(\partial_x+\kappa)(|u|^4\bar{u})-(|u|^4\bar{u})(\partial_x-\kappa)-2\kappa|u|^4\bar{u},
	\end{align*}
The first two terms in each identity cancel each other. Thus,
\begin{align}\label{CM}
	C_m=3\kappa (-i\kappa)^{m}{\rm tr}\{(R_-uR_+\bar{u})^{m-1}(-R_-u|u|^4R_+\bar{u}+R_-uR_+|u|^4\bar{u})\}.
\end{align}
We observe that the term $C_{m-2}$ cancels the first term in $B_{m-1}$. Therefore, we may rearrange the terms \eqref{AM} and \eqref{BM}-\eqref{CM} to obtain
	\begin{align*}
	&A_m+B_{m-1}+C_{m-2}\\
	=\,&-3(-i\kappa)^m{\rm tr}\{(R_-uR_+\bar{u} )^{m-2}(R_-uR_+(\bar{u}^2u_x+2|u|^2\bar{u}_x)+R_-(u^2\bar{u}_x+2|u|^2u_x)R_+\bar{u})\}\\
	&-6\kappa(-i\kappa)^m{\rm tr}\{(R_-uR_+\bar{u} )^{m-2}(R_-uR_+|u|^2\bar{u}-R_-|u|^2uR_+\bar{u})\}.
	\end{align*}
It is worth noting that
\begin{align}
	&\bar{u}^2u_x+2|u|^2\bar{u}_x=(u\bar{u}^2)_x=(\partial_x+\kappa)u\bar{u}^2-u\bar{u}^2(\partial_x-\kappa)-2\kappa u\bar{u}^2,\label{no1}\\
	&u^2\bar{u}_x+2|u|^2u_x=(u^2\bar{u})_x=(\partial_x-\kappa)u^2\bar{u}-u^2\bar{u}(\partial_x+\kappa)+2\kappa u^2\bar{u}.\label{no2}
\end{align}
The first two terms in each identity cancel once again, and hence
	\begin{align*}
	&A_m+B_{m-1}+C_{m-2}\\
		=\,&-3(-i\kappa)^m {\rm tr}\{(R_-uR_+\bar{u})^{m-2}(-2\kappa R_-uR_+u\bar{u}^2+2\kappa R_-u^2\bar{u}R_+\bar{u})\}\\
		&-6\kappa(-i\kappa)^m{\rm tr}\{(R_-uR_+\bar{u} )^{m-2}(R_-uR_+\bar{u}^2u-R_-u^2\bar{u}R_+\bar{u})\}=0.
\end{align*}

Finally, we consider $A_2 + B_1$. In $B_1$, the contributions from \eqref{Bm1} and \eqref{Bm2} vanish because ${\rm tr}(R_{\pm}[\partial_x,f])=0$. Then from \eqref{no1}-\eqref{no2}, we have that
	\begin{align*}
	A_2+B_1=\,&3\kappa^2{\rm tr}\{R_-uR_+\bar{u}^2u_x+R_-u^2\bar{u}_xR_+\bar{u}\}+6\kappa^3{\rm tr}\{R_-uR_+\bar{u}|u|^2-R_-|u|^2uR_+\bar{u}\}\\
	&+3\kappa^2{\rm tr}\{R_-(2|u|^2u_x)R_+\bar{u}+R_-uR_+(2|u|^2\bar{u}_x)\}=0.
\end{align*}
This completes the proof.
\end{proof}

For sufficiently large $\kappa>0$ and $L\in\mathbb N$, from \eqref{log-a}, we define 
\begin{equation*}
\varphi_L(\kappa;u):=
\operatorname{Im}\left[
-\alpha(\kappa;u)
-\sum_{j=0}^{2L+1}\frac{E_j(u)}{(i\kappa)^j}
\right].
\end{equation*}
Since the real part of $E_1(u)$ is zero, we have 
\begin{equation*}
\varphi_0(\kappa;u):=
\operatorname{Im}\left[
-\alpha(\kappa;u)
-E_0(u)
\right]=-\operatorname{Im}
\alpha(\kappa;u)+\frac{1}{2}\int_{\mathbb R}|u|^2\,dx.
\end{equation*}
From Proposition \ref{conser0} and the conservation of mass, we know that $\varphi_0$ is conserved:
\begin{align}\varphi_0(\kappa;u(t))=\varphi_0(\kappa;u_0), \quad \forall t\in \R.\label{varphicons}\end{align}
From \eqref{alpha} and \eqref{alpha_1}, we know that the quadratic part of $\varphi_0(\kappa;u)$ is 
\begin{equation*}
\varphi_{0,0}(\kappa;u)=\frac{1}{2}\int \frac{\xi^2|\hat{u}(\xi)|^2}{\xi^2+4\kappa^2}d\xi.
\end{equation*}

Before proving Theorem \ref{thm-apriori}, we present several important estimates that will be needed. 
From the definition of $\Lambda$ and Bernstein inequality, we find that 
\begin{align}\label{op1}
    \|\Lambda(\kappa;u_N)\|_{\operatorname{op}}\lesssim \frac{1}{\kappa}\|u_N\|_{L^\infty}\lesssim \frac{\sqrt{N}}{\kappa}\|u_N\|_{L^2}.
\end{align}
From \eqref{remainder1}, we know that 
\begin{align}\label{I2}
    \|\Lambda(\kappa;u_N)\|^2_{\mathfrak I_2(\mathbb{R})}\sim  \frac{\log(4+N^2/\kappa^2)}{\kappa+N}\|u_N\|_{L^2}^2
    \lesssim
    \begin{cases}
        \kappa^{-1}\|u_N\|_{L^2}^2, & N\le\kappa,\\[1mm]
        N^{-1}\log(4+N^2/\kappa^2), & N>\kappa.
    \end{cases}
\end{align}

The following lemma collects the estimates of the quadratic term.
\begin{lem} Let $u\in\mathcal S(\mathbb R)$ and $0<s<1$. Then, for every $K>0$,
\begin{align}
 \|u_{>K}\|_{ H^s(\mathbb R)}^2&\lesssim  \int_K^\infty \kappa^{2s-1}\varphi_{0,0}(\kappa;u)\,d\kappa\lesssim  \|u\|_{\dot H^s(\mathbb R)}^2, \label{quadratic1}\\
  \|u\|_{\dot H^s(\mathbb R)}^2&\lesssim_s\int_K^\infty
\kappa^{2s-1}\varphi_{0,0}(\kappa;u)\,d\kappa+K^{2s}\|u\|_{L^2(\mathbb R)}^2.\label{quadratic2}
\end{align}
\end{lem}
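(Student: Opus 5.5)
This is a Fubini computation on the Fourier side. Since
\[
\varphi_{0,0}(\kappa;u)=\frac12\int \frac{\xi^2|\hat u(\xi)|^2}{\xi^2+4\kappa^2}\,d\xi ,
\]
Tonelli gives
\[
\int_K^\infty \kappa^{2s-1}\varphi_{0,0}(\kappa;u)\,d\kappa=\frac12\int |\hat u(\xi)|^2\, m_K(\xi)\,d\xi,
\qquad
m_K(\xi):=\int_K^\infty \frac{\kappa^{2s-1}\xi^2}{\xi^2+4\kappa^2}\,d\kappa .
\]
So both \eqref{quadratic1} and \eqref{quadratic2} reduce to two-sided bounds on the multiplier $m_K$.

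The first step is the scaling bound. Substituting $\kappa=|\xi|\lambda$ gives
\[
m_K(\xi)=|\xi|^{2s}\int_{K/|\xi|}^\infty \frac{\lambda^{2s-1}}{1+4\lambda^2}\,d\lambda .
\]
The full integral $c_s=\int_0^\infty \lambda^{2s-1}(1+4\lambda^2)^{-1}d\lambda$ is finite precisely because $0<s<1$: the integrand behaves like $\lambda^{2s-1}$ near $0$ and like $\lambda^{2s-3}$ at infinity. Hence $m_K(\xi)\le c_s|\xi|^{2s}$, which is the upper bound in \eqref{quadratic1}.

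For the lower bound in \eqref{quadratic1}, I read $u_{>K}$ as the part of $u$ with frequencies $|\xi|\gtrsim K$. There $K/|\xi|\lesssim 1$, so the $\lambda$-integral is bounded below by $\int_{C}^\infty\lambda^{2s-1}(1+4\lambda^2)^{-1}d\lambda>0$ for some fixed $C$. This gives $m_K(\xi)\gtrsim|\xi|^{2s}\sim\langle\xi\rangle^{2s}$ when $|\xi|\ge K\gtrsim1$.

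One point needs care. The inhomogeneous norm also contains $\|u_{>K}\|_{L^2}$, and this is controlled by the homogeneous part once $K\gtrsim1$. For small $K$, though, the inhomogeneous $H^s$ norm cannot be bounded by the homogeneous quantity on the right. So I will interpret the left side as $\|u_{>K}\|_{\dot H^s}$, equivalently assume $K\ge1$, as in the intended applications. The Littlewood–Paley cutoff's overlap region is harmless, since there $|\xi|\sim K$.

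For \eqref{quadratic2}, split the frequency integral. If $|\xi|\ge K$, the lower bound just proved gives $|\xi|^{2s}\lesssim_s m_K(\xi)$. If $|\xi|<K$, simply bound $|\xi|^{2s}\le K^{2s}$, and integrating $|\hat u|^2$ over this region yields $K^{2s}\|u\|_{L^2}^2$. Adding the two pieces gives \eqref{quadratic2}.

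The main obstacle is the tail of $m_K$ as $|\xi|\to\infty$, that is, making sure the constants depend only on $s$. This is exactly where the restriction $s<1$ enters: for $s\ge1$ the $\lambda$-integral diverges at infinity. Everything else is elementary.
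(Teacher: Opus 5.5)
Your proof is correct and follows the paper's route: Tonelli, the substitution $\kappa=|\xi|\lambda$, and two-sided control of the resulting multiplier. The paper packages this as $m_K(\xi)\sim \xi^2(\xi^2+K^2)^{s-1}$, which is exactly your upper bound combined with your lower bound on $|\xi|\gtrsim K$. Your caveat is also correct: with the inhomogeneous $H^s$ norm, the left inequality in \eqref{quadratic1} fails for small $K$ (take $u_{>K}=u$ with $\hat u$ concentrated near $0$), so restricting to $K\gtrsim 1$ is necessary, and harmless since the paper only applies the lemma with $K\ge K_0\ge 1$.
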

\begin{proof} A direct computation shows that
 $$\int_{K}^{\infty} \frac{\kappa^{2s-1}}{\xi^2 + 4\kappa^2} \, d\kappa=\xi^{2s-2}\int_{K/\xi}^{\infty} \frac{\eta^{2s-1}}{1 + 4\eta^2} \, d\eta\sim (\xi^2 + K^2)^{s-1}.$$
 Therefore, 
$$\int_K^\infty \kappa^{2s-1}\varphi_{0,0}(\kappa;u)\,d\kappa\sim \int \frac{\xi^2|\hat{u}(\xi)|^2}{(\xi^2 + K^2)^{1-s}}d\xi,$$
which immediately yields \eqref{quadratic1} and \eqref{quadratic2}. 
\end{proof}
The following lemma collects the nonlinear remainder estimates from \cite[Lemma 2.3]{BLP2023}. 
\begin{lem} Let $u\in\mathcal S(\mathbb R)$ and $0<s<1$. Then there exists $K_0=K_0(\|u\|_{H^{1/3}(\mathbb R)})\ge1$ such that for $ \beta=\max\left\{\frac{s+1}{4},\frac13\right\}$,
\begin{equation}\label{spectral-remainder}
|\varphi_0(\kappa;u)-\varphi_{0,0}(\kappa;u)|
\le
\frac{C(s,\|u\|_{H^\beta(\mathbb R)})}{\kappa^{s+1}}
\bigl(\|u\|_{H^s(\mathbb R)}+1\bigr),
\qquad \kappa\ge K_0.
\end{equation}
\end{lem}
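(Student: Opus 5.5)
The plan is to expand $\varphi_0-\varphi_{0,0}$ in the series \eqref{alpha} and bound the quartic term exactly and the higher-order terms by Hilbert--Schmidt norms, with frequency localization at the scale $\kappa$. Since the $m=1$ term together with $E_0$ gives exactly $\varphi_{0,0}$, we have
\begin{align*}
\varphi_0(\kappa;u)-\varphi_{0,0}(\kappa;u)=-\operatorname{Im}\sum_{m\ge2}\frac{(-i\kappa)^m}{m}\operatorname{tr}\bigl\{[(\partial_x-\kappa)^{-1}u(\partial_x+\kappa)^{-1}\bar u]^m\bigr\}.
\end{align*}
Splitting each resolvent as a product of square roots and cycling the trace rewrites the $m$-th term, up to a unimodular constant, as $\kappa^m\operatorname{tr}\{(\Lambda\Gamma)^m\}$ with $\Lambda=\Lambda(\kappa;u)$, $\Gamma=\Gamma(\kappa;u)$. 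The argument then treats $m=2$ and $m\ge3$ separately.

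\textbf{Quartic term.} I expect the main obstacle to be the $m=2$ term. A naive bound $\kappa^2\|\Lambda\|_{\mathfrak I_2}^4$ only gives $O(\|u\|_{L^2}^4)$ by \eqref{I2}, with no decay in $\kappa$. Its leading asymptotics is $\kappa^{-1}\int|u|^4$, which is exactly the quartic part of $E_1/(i\kappa)$, and this quantity is real because $E_1$ is purely imaginary. So after taking the imaginary part, the $\kappa^{-1}$ contribution has to cancel.

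To make this quantitative, I would write $\kappa^2\operatorname{tr}\{(\Lambda\Gamma)^2\}$ as a Fourier multiplier integral in $\hat u(\xi_1)\overline{\hat u}(\xi_2)\hat u(\xi_3)\overline{\hat u}(\xi_4)$. Its symbol is a product of factors $(\kappa\pm i\zeta)^{-1}$ evaluated at partial sums of the $\xi_j$. I would then decompose the symbol as $m(\xi)=m(0)+[m(\xi)-m(0)]$.
\begin{itemize}
\item The term $m(0)$ produces the real $\kappa^{-1}\int|u|^4$ piece, which drops out of the imaginary part.
\item The difference satisfies $|m(\xi)-m(0)|\lesssim \kappa^{-1}\min\{1,|\xi|/\kappa\}\le\kappa^{-1}(|\xi|/\kappa)^s$.
\end{itemize}
The factor $|\xi|^s$ is placed on the highest-frequency input. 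Applying Littlewood--Paley decomposition $u=u_{\le\kappa}+u_{>\kappa}$ and bounding the remaining three factors in $L^4$ or $L^\infty$ by Sobolev embedding at regularity $\beta\ge 1/3$ should give
\begin{align*}
|\operatorname{Im}(\text{quartic term})|\lesssim\kappa^{-1-s}\|u\|_{H^s}\|u\|_{H^\beta}^3.
\end{align*}
Here the choice $\beta\ge(s+1)/4$ is what makes the distribution of $s$ derivatives over four factors close when the high frequencies sit at a single input.

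\textbf{Terms with $m\ge3$.} I would use the Hilbert--Schmidt and operator bounds \eqref{op1}--\eqref{I2}. For every piece in which some factor carries frequency $N\gtrsim\kappa$, the logarithmic bound in \eqref{I2} gives
\begin{align*}
\|\Lambda(\kappa;u_{>\kappa})\|_{\mathfrak I_2}\lesssim\kappa^{-s-1/2+}\|u\|_{H^s}.
\end{align*}
For the purely low-frequency pieces, I would again expand the symbol around zero frequency. The leading parts are the higher-order analogues of the quantities $E_j$ with $j\ge1$, and they contribute only at orders $\kappa^{-1}$ and below. Their imaginary parts either vanish, by the same reality argument as for the quartic term, or are already $O(\kappa^{-2})$, which is $\le\kappa^{-1-s}$ since $s<1$.

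\textbf{Choice of $K_0$ and summation.} Take $K_0(\|u\|_{H^{1/3}})$ large enough that $\kappa\|\Lambda\|_{\mathfrak I_2}^2\le\tfrac12$ for all $\kappa\ge K_0$; this is possible because the right-hand side of \eqref{remainder1} tends to zero at a rate controlled by the $H^{1/3}$ norm. The series over $m\ge3$ then converges geometrically, and each term picks up one factor $\kappa^{-1-s}\|u\|_{H^s}$ (or $\kappa^{-2}$), with the other factors bounded by $C(\|u\|_{H^\beta})$. Summing the geometric series gives \eqref{spectral-remainder}.
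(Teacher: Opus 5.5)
\textbf{Quartic term.} Your treatment of the $m=2$ term is correct, and it is the real content of the estimate. The paper gives no argument here; it imports Lemma 2.3 of Bahouri--Leslie--Perelman (BLP2023), which applies because $\alpha(\kappa;u)$ is the same functional as for dNLS.

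The $m=2$ symbol is $\kappa^2$ times a contour integral that equals $\pi/(2\kappa^3)$ at zero frequency (up to normalization), so it is real there, and its gradient is $O(\kappa^{-4})$. Hence $|m(\xi)-m(0)|\lesssim\kappa^{-1}\min\{1,|\xi|/\kappa\}$, as you say.

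One correction: after replacing $u$ by $\mathcal F^{-1}|\hat u|$, the three remaining factors go in $L^6$ (H\"older $L^2\times L^6\times L^6\times L^6$), not in $L^4$ or $L^\infty$. This gives $\kappa^{-1-s}\|u\|_{\dot H^s}\|u\|_{H^{1/3}}^3$. The condition $\beta\ge(s+1)/4$ plays no role.

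\textbf{The gap: terms $m\ge3$ and the choice of $K_0$.} You cannot choose $K_0$ so that $\kappa\|\Lambda(\kappa;u)\|_{\mathfrak I_2}^2\le\frac12$ for $\kappa\ge K_0$.
\begin{itemize}
\item By \eqref{I2}, $\kappa\|\Lambda(\kappa;u_{\le\kappa})\|_{\mathfrak I_2}^2\sim\|u_{\le\kappa}\|_{L^2}^2$. So $\kappa\|\Lambda(\kappa;u)\|_{\mathfrak I_2}^2$ tends to a constant multiple of $\|u\|_{L^2}^2$ as $\kappa\to\infty$. It is the scale-invariant quantity and is small only for small mass, while the lemma (and its use in Theorem~\ref{thm-apriori}) has no smallness assumption.
\item What tends to zero is $\|\Lambda\|_{\mathfrak I_2}^2$ itself. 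That is not what the series $\sum_m\kappa^m{\rm tr}\{(\Lambda\Gamma)^m\}$ requires.
\item Even for small mass, the Hilbert--Schmidt bound $(\kappa\|\Lambda\|_{\mathfrak I_2}^2)^m$ has no decay in $\kappa$.
\item \eqref{op1} on the low frequencies only gives $\kappa\|\Lambda(\kappa;u_{\le\kappa})\|_{\rm op}^2\lesssim\kappa^{-2/3}\|u\|_{H^{1/3}}^2$. This route therefore yields at most $\kappa^{-4/3}$ for $m=3$, which is insufficient for $s>1/3$. Reaching $\kappa^{-2}$ this way needs $\|u\|_{L^\infty}$, which is unavailable in $H^\beta$ with $\beta<1/2$.
\end{itemize}
So your assertion that each term picks up $\kappa^{-1-s}\|u\|_{H^s}$ or $\kappa^{-2}$ is unsupported. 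The discussion of imaginary parts of higher analogues of $E_j$ is beside the point: for $m\ge3$ there is no $\kappa^{-1}$ contribution at all.

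\textbf{The missing tool} is a Schatten bound with genuine $\kappa$-decay. Split $u=|u|^{1/2}\cdot{\rm sgn}(u)|u|^{1/2}$ and apply the Kato--Seiler--Simon inequality in $\mathfrak I_{12}$ to get $\|\Lambda(\kappa;u)\|_{\mathfrak I_6}+\|\Gamma(\kappa;u)\|_{\mathfrak I_6}\lesssim\kappa^{-5/6}\|u\|_{L^6}$. By H\"older in Schatten classes and $\|\cdot\|_{\mathfrak I_{2m}}\le\|\cdot\|_{\mathfrak I_6}$,
\[
\sum_{m\ge3}\frac{\kappa^m}{m}\bigl|{\rm tr}\{(\Lambda\Gamma)^m\}\bigr|\le\sum_{m\ge3}\bigl(C\kappa^{-2/3}\|u\|_{L^6}^2\bigr)^m\lesssim\kappa^{-2}\|u\|_{H^{1/3}}^6
\]
as soon as $C\kappa^{-2/3}\|u\|_{H^{1/3}}^2\le\frac12$. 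This is exactly what fixes $K_0=K_0(\|u\|_{H^{1/3}})$. Since $\kappa^{-2}\le\kappa^{-1-s}$, this bound together with your quartic estimate proves the lemma (even with $\beta=1/3$). No cancellation or frequency splitting is needed for $m\ge3$.
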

When absorbing the remainder terms, we need to use the equicontinuity in $L^2$, and this result is established in Theorem 1.5 \cite{ HGKV2023} for the entire dNLS hierarchy.
\begin{lem}\label{equicontinuous}
Let $Q \subseteq \mathcal{S}(\mathbb{R})$ be $L^2$-bounded and equicontinuous. Then
\begin{align*}
Q_{*} = \left\{ u \in \mathcal{S}(\mathbb{R}) : a(\lambda; u) \equiv a(\lambda; \tilde{u}) \ \text{for some} \ \tilde{u} \in Q \right\} 
\end{align*}
is also $L^2$-bounded and equicontinuous.
\end{lem}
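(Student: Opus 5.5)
The statement in question is Lemma~\ref{equicontinuous}, which is quoted from Theorem 1.5 of \cite{HGKV2023}. Rather than reprove it in full, I would check that the third-order Kaup--Newell flow fits the hypotheses there. The argument in \cite{HGKV2023} concerns the spectral level sets, i.e.\ sets determined by $a(\lambda;u)$, and this object is exactly the same across the whole dNLS hierarchy. So the proof transfers verbatim. The plan has three steps.

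\emph{Step 1: $L^2$-boundedness.} This is immediate from \eqref{log-a}. If $a(\lambda;u)\equiv a(\lambda;\tilde u)$, then the asymptotic expansion coefficients agree, in particular $E_0(u)=E_0(\tilde u)$. Since $E_0=-\frac{i}{2}\|u\|_{L^2}^2$, this gives $\|u\|_{L^2}=\|\tilde u\|_{L^2}$. Hence $Q_*$ inherits the $L^2$ bound of $Q$.

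\emph{Step 2: equicontinuity via the quadratic part.} I would use the characterization that an $L^2$-bounded set $Q$ is equicontinuous if and only if
\[
\lim_{\kappa\to\infty}\sup_{u\in Q}\int \frac{\xi^2|\hat u(\xi)|^2}{\xi^2+4\kappa^2}\,d\xi=0,
\]
that is, $\sup_{u\in Q}\varphi_{0,0}(\kappa;u)\to0$. Since $a$ is shared, $\alpha(\kappa;u)=\alpha(\kappa;\tilde u)$, and so $\varphi_0(\kappa;u)=\varphi_0(\kappa;\tilde u)$ for $\kappa$ large. It then remains to show that
\[
|\varphi_0-\varphi_{0,0}|\le o(1)\,\varphi_{0,0}+o(1)
\]
as $\kappa\to\infty$, uniformly over $L^2$-bounded equicontinuous families. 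Once that holds, the tail quantity for $u$ is controlled by the tail quantity for $\tilde u$, and Step 2 closes.

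\emph{Step 3: the main obstacle, large mass.} For large $L^2$ norm, the series \eqref{alpha} need not converge, because $\|\Lambda\|_{\mathfrak I_2}$ is only small after splitting frequencies. Here I would follow \cite{HGKV2023}. First split $u=u_{\le N}+u_{>N}$. By \eqref{op1} and \eqref{I2}, the high part has small Hilbert--Schmidt norm relative to $\kappa$. The low part is handled through a Fredholm determinant and resolvent bound at $\kappa\gg N$, where $\|\Lambda(\kappa;u_{\le N})\|_{\mathrm{op}}\lesssim\sqrt N\kappa^{-1}\|u\|_{L^2}$ is small. Then express $\log a$ as a convergent expansion in the high part around the low-frequency operator. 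The cubic and higher remainders are bounded by $\|\Lambda(\kappa;u_{>N})\|_{\mathfrak I_2}^2$ times a small factor, and this gives the desired $o(1)\varphi_{0,0}$ absorption. The one verification specific to this paper is that $a(\lambda;u)$, hence $Q_*$, is literally the same object as in \cite{HGKV2023}. This holds because the spatial Lax operator \eqref{standard-spatial} is common to all flows of the hierarchy. Therefore the lemma applies to solutions of \eqref{maineq} by the conservation in Proposition~\ref{conser0}.
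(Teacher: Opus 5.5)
Your core move is the paper's. The paper gives no argument for this lemma and simply invokes Theorem~1.5 of \cite{HGKV2023}. Your remark that the statement involves only $a(\lambda;u)$, i.e.\ only the common spatial operator \eqref{standard-spatial}, is the right reason this is legitimate. In fact no flow enters the lemma at all: conservation of $a$ along \eqref{maineq} is needed only afterwards, to place $u(t)$ in $Q_*$. However, the supporting sketch you give would not stand on its own, so the proposal is valid only through the citation.

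\emph{Step~1 is not immediate.} Since $\lim_{|\lambda|\to\infty}a(\lambda;u)=e^{E_0(u)}=e^{-\frac i2\|u\|_{L^2}^2}$ and $\log a$ is only defined up to $2\pi i\mathbb{Z}$, the identity $a(\cdot;u)\equiv a(\cdot;\tilde u)$ gives $E_j(u)=E_j(\tilde u)$ for $j\ge1$. For $j=0$ it gives only $\|u\|_{L^2}^2-\|\tilde u\|_{L^2}^2\in4\pi\mathbb{Z}$, which is no bound at all. What $a$ does determine is $\varphi_0=\operatorname{Im}\log\bigl(a\,e^{-E_0}\bigr)$, with the branch normalized to vanish as $\kappa\to\infty$. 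Read that way, your identity in Step~2 is unproblematic, but the mass term has been subtracted off, so it carries no $L^2$ information. The $L^2$-boundedness of $Q_*$ therefore needs extra input, e.g.\ a trace formula that accounts for the zeros of $a$ in $\Omega_+$, or the argument of \cite{HGKV2023}.

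\emph{Steps~2--3 are circular as written.} You ask for the absorption bound ``uniformly over $L^2$-bounded equicontinuous families''. That cannot be applied to $u\in Q_*$, whose equicontinuity is the conclusion. It also fails over merely $L^2$-bounded families at large mass. The weights $(-i\kappa)^m$ in \eqref{alpha} make the expansion parameter $\kappa\|\Lambda\|_{\mathfrak I_2}\|\Gamma\|_{\mathfrak I_2}$, and by \eqref{I2},
$$\kappa\|\Lambda(\kappa;u_{N'})\|_{\mathfrak I_2}^2\sim\|u_{N'}\|_{L^2}^2\quad\text{for } N'\le\kappa,$$
which is scale invariant and not small.

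In Step~3 this shows up concretely. Making $\kappa\|\Lambda(\kappa;u_{\le N})\|_{\operatorname{op}}^2\lesssim N\kappa^{-1}\|u\|_{L^2}^2$ small forces $N\ll\kappa$. But then $u_{>N}$ contains the band $N<|\xi|\lesssim\kappa$. There its weighted Hilbert--Schmidt norm is comparable to its $L^2$ norm, which is small only if you already know the equicontinuity you want.

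An extra device is needed to break this circularity. One option is a continuity argument in $\kappa$: each fixed Schwartz $u$ is perturbative for $\kappa\ge\kappa(u)$, and the identity $\varphi_0(\kappa;u)=\varphi_0(\kappa;\tilde u)$ is then used to propagate smallness of the high-frequency tail down to a threshold depending only on $Q$.
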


Fix $s>0$ and $M>0$, and set
$$   Q_{s,M}:=\bigl\{f\in\mathcal S(\mathbb R):\|f\|_{H^s}\le M\bigr\}.$$ 
Since
$$ \sup_{f\in Q_{s,M}}\|P_{>N}f\|_{L^2}\lesssim M N^{-s},$$ 
the set $Q_{s,M}$ is $L^2$-bounded and equicontinuous.
Lemma~4.5 therefore implies that its spectral level set $(Q_{s,M})_*$ is also $L^2$-bounded and equicontinuous. 

Let $u$ be a solution of \eqref{maineq} with
$u_0\in Q_{s,M}$, and let $I\ni0$ be an interval on which $u(t)\in\mathcal S(\mathbb R)$.
By Proposition~4.2, $ a(\lambda;u(t))=a(\lambda;u_0),\,t\in I$. Thus $u(t)\in(Q_{s,M})_*$ for every $t\in I$, and hence 
 \begin{align}\label{equicontinuous1}
    \lim_{N\to\infty}\sup_{t\in I} \|P_{>N}u(t)\|_{L^2}=0.
\end{align}
The frequency cutoff can be chosen uniformly over all such solutions with initial data in $Q_{s,M}$.


\begin{proof}[Proof of Theorem \ref{thm-apriori}] (1) {\bf Case} $0\leq s<1/2$. The case $s=0$ corresponds to mass conservation, so next we consider $0<s<1/2$. We can choose $K_0>1$ large enough such that for $\kappa>K\geq K_0$, $\alpha(\kappa;u)$ is well defined. First, we carry out a controlled estimate of the remainder term. From \eqref{alpha}, we know that
\begin{align*}
    &\int_K^\infty 
      \kappa^{2s-1} |\varphi_0(\kappa;u)-\varphi_{0,0}(\kappa;u)| d\kappa \lesssim \int_K^\infty\kappa^{2s+1} \|\Lambda(\kappa;u)\Gamma(\kappa;u)\|_{\mathfrak I_2}^2d\kappa.
 \end{align*}
 We perform the frequency decomposition of the integrand to obtain
\begin{align*}
    \|\Lambda(\kappa;u)\Gamma(\kappa;u)\|_{\mathfrak I_2}^2 \lesssim & \sum_{N_1\sim N_2\ge N_3\ge N_4} \|\Lambda(\kappa;u_{N_1})\|_{\mathfrak I_2}\|\Lambda(\kappa;u_{N_2})\|_{\mathfrak I_2}\|\Lambda(\kappa;u_{N_3})\|_{\operatorname{op}}\|\Lambda(\kappa;u_{N_4})\|_{\operatorname{op}}\notag\\
    \lesssim & \sum_{R_j} \|\Lambda(\kappa;u_{N_2})\|_{\mathfrak I_2}^2\|\Lambda(\kappa;u_{N_3})\|_{\operatorname{op}}\|\Lambda(\kappa;u_{N_4})\|_{\operatorname{op}},
\end{align*}
where we decompose the summation into the regions
\begin{align*}
    R_1&=\{N_2\le K\},\\
    R_2&=\{K<N_2\le\kappa,\ N_3\le\eta K\},\quad   R_3=\{K<N_2\le\kappa,\ N_3>\eta K\},\\
    R_4&=\{N_2>\kappa,\ N_3\le\eta K\},\quad     R_5=\{N_2>\kappa,\ N_3>\eta K\}.
\end{align*}
Let $I_j$ denote the contribution of $R_j$ after integration against $\kappa^{2s+1}d\kappa$. Notice that the cutoff $\eta K$ is independent of the
integration variable $\kappa$. 

We use  \eqref{op1} and \eqref{I2} to estimate $I_1$-$I_5$. On $R_1$, given $0<s<1/2$, we have
\begin{align*}
    I_1 &\lesssim \int_K^\infty\kappa^{2s-2}d\kappa\cdot K\|u\|_{L^2}^4 \lesssim_s K^{2s}\|u\|_{L^2}^4.
\end{align*}
On $R_2$, both lower frequencies are at most $\eta K$. Thus for $0<s<1/2$,
\begin{align*}
    I_2  &\lesssim \eta K \|u\|_{L^2}^2\int_K^\infty\kappa^{2s-2}d\kappa\cdot  \sum_{N_2>K}N_2^{-2s}N_2^{2s}\|u_{N_2}\|_{L^2}^2\lesssim_s
      \eta  \|u\|_{L^2}^2  \|u_{>K}\|_{H^s}^2.
\end{align*}
On $R_3$,  for $0<s<1/2$, we obtain
\begin{align*}
    I_3 &\lesssim  \|u\|_{L^2}\|u_{>\eta K}\|_{L^2} \sum_{N_2>K} N_2\|u_{N_2}\|_{L^2}^2\cdot\int_{N_2}^\infty\kappa^{2s-2}d\kappa
        \lesssim_s \|u\|_{L^2}\|u_{>\eta K}\|_{L^2}\|u_{>K}\|_{H^s}^2.
\end{align*}

For the remaining two regions, we use
\begin{align*}
    \int_K^{N_2}
    \kappa^{2s-1}\log(4+N_2^2/\kappa^2) \,d\kappa  &=  N_2^{2s}\int_{K/N_2}^1 r^{2s-1}\log(4+r^{-2})\,dr \lesssim_s N_2^{2s},
\end{align*}
which is valid because $s>0$. Therefore,
\begin{align*}
    I_4 &\lesssim \eta  \|u\|_{L^2}^2 \sum_{N_2>K} \frac{K}{N_2}\|u_{N_2}\|_{L^2}^2\cdot \int_K^{N_2}  \kappa^{2s-1}\log(4+N_2^2/\kappa^2) \,d\kappa \lesssim_s  \eta  \|u\|_{L^2}^2 \|u_{>K}\|_{H^s}^2.\\
    I_5  &\lesssim  \|u\|_{L^2}\|u_{>\eta K}\|_{L^2} \sum_{N_2>K} \|u_{N_2}\|_{L^2}^2\cdot \int_K^{N_2}  \kappa^{2s-1}\log(4+N_2^2/\kappa^2) \,d\kappa\\
       &\lesssim_s \|u\|_{L^2}\|u_{>\eta K}\|_{L^2}\|u_{>K}\|_{H^s}^2.
\end{align*}
Summing the five contributions and using \eqref{quadratic1}, we find that for $0<\eta<1$ and $\eta K\ge1$,
\begin{align}\label{small1}
    &\int_K^\infty 
      \kappa^{2s-1} |\varphi_0(\kappa;u)-\varphi_{0,0}(\kappa;u)| d\kappa \notag\\
     \lesssim_s  & K^{2s}\|u\|_{L^2}^4+\left(\eta\|u\|_{L^2}^2 +\|u\|_{L^2}\|u_{>\eta K}\|_{L^2}\right)\|u_{>K}\|_{H^s}^2\notag\\
    \lesssim_s  & K^{2s}\|u_0\|_{L^2}^4+\left(\eta\|u_0\|_{L^2}^2 +\|u_0\|_{L^2}\|u_{>\eta K}\|_{L^2}\right)\int_K^\infty 
      \kappa^{2s-1} \varphi_{0,0}(\kappa;u) d\kappa.
 \end{align}
We may choose $\eta\in(0,1)$ small so that 
$$
    C_s\eta \|u_0\|_{L^2}^2\le\frac14.
$$
By Lemma \ref{equicontinuous} and \eqref{equicontinuous1}, we may then choose $K$ large enough such that
$$
    C_s\|u_0\|_{L^2}\sup_{t\in I}\|u_{>\eta K}\|_{L^2}\le\frac14.
$$
Therefore, absorbing the last term in \eqref{small1} and using \eqref{varphicons} give that
\begin{align*}
\int_K^\infty \kappa^{2s-1} \varphi_{0,0}(\kappa;u(t)) d\kappa &\leq 2\int_K^\infty \kappa^{2s-1} \varphi_0(\kappa;u(t)) d\kappa + 2C_sK^{2s}\|u_0\|_{L^2}^4\notag\\
&= 2\int_K^\infty \kappa^{2s-1} \varphi_0(\kappa;u_0) d\kappa + 2C_sK^{2s}\|u_0\|_{L^2}^4\notag\\
&\leq 3 \int_K^\infty \kappa^{2s-1} \varphi_{0,0}(\kappa;u_0) d\kappa + 4C_sK^{2s}\|u_0\|_{L^2}^4.
 \end{align*}
It follows from \eqref{quadratic1} and \eqref{quadratic2} that 
\begin{align}\label{result1}
\sup_{t}\|u(t)\|_{H^s(\mathbb R)}^2
\le C\bigl(s,\|u_0\|_{H^s(\mathbb R)}\bigr), \quad  \forall s\in[0,\frac12).
\end{align}

(2) {\bf Case} $1/2\leq s<1$. With (1) established, we may choose $K_0=K_0(\|u_0\|_{H^{1/3}})$ so that the remainder estimate \eqref{spectral-remainder} applies.  From \eqref{quadratic2} and \eqref{spectral-remainder}, for every $t\in \R$ and $ \beta=\frac{s+1}{4}\in [\frac{3}{8},\frac12)$, we have
\begin{align*}
\|u(t)\|_{\dot H^s(\mathbb R)}^2
&\le  C(s) \int_{K_0}^{\infty} \kappa^{2s-1}|\varphi_{0,0}(\kappa;u(t))|\,d\kappa + C(s)K_0^{2s}\|u(t)\|_{L^2(\mathbb R)}^2 \notag\\
&\le C(s)\int_{K_0}^{\infty} \kappa^{2s-1}|\varphi_0(\kappa;u(t))|\,d\kappa + C\bigl(s,K_0,\|u(t)\|_{H^\beta(\mathbb R)}\bigr) \bigl(\|u(t)\|_{H^s(\mathbb R)}+1\bigr)\notag\\
&\le C(s)\int_{K_0}^{\infty}\kappa^{2s-1}|\varphi_0(\kappa;u_0)|\,d\kappa+C\bigl(s,\|u_0\|_{H^s(\mathbb R)}\bigr)\bigl(\|u(t)\|_{H^s(\mathbb R)}+1\bigr)
\end{align*}
where we used \eqref{varphicons} and the result of \eqref{result1}. Utilizing \eqref{spectral-remainder} and \eqref{quadratic1} again, we have 
\begin{align*}
\|u(t)\|_{\dot H^s(\mathbb R)}^2
&\le C(s)\int_{K_0}^{\infty}\kappa^{2s-1}|\varphi_{0,0}(\kappa;u_0)|\,d\kappa+C\bigl(s,\|u_0\|_{H^s(\mathbb R)}\bigr)\bigl(\|u(t)\|_{H^s(\mathbb R)}+1\bigr)\notag\\
&\le C\bigl(s,\|u_0\|_{H^s(\mathbb R)}\bigr)+C\bigl(s,\|u_0\|_{H^s(\mathbb R)}\bigr)\|u(t)\|_{H^s(\mathbb R)}
\end{align*}
Conservation of mass gives
\begin{align*}
\|u(t)\|_{H^s(\mathbb R)}^2 &\le \|u_0\|_{L^2(\mathbb R)}^2 +\|u(t)\|_{\dot H^s(\mathbb R)}^2\\
&\le C\bigl(s,\|u_0\|_{H^s(\mathbb R)}\bigr)+C\bigl(s,\|u_0\|_{H^s(\mathbb R)}\bigr)\|u(t)\|_{H^s(\mathbb R)}\\
&\le \frac12\|u(t)\|_{H^s(\mathbb R)}^2
+C\bigl(s,\|u_0\|_{H^s(\mathbb R)}\bigr).
\end{align*}
Absorbing the resulting term yields
$$
\sup_{t}\|u(t)\|_{H^s(\mathbb R)}^2
\le C\bigl(s,\|u_0\|_{H^s(\mathbb R)}\bigr), \quad  \forall s\in[\frac12,1).
$$
This completes the proof.
\end{proof}

\section{Global well-posedness}

\begin{proof}[Proof of Theorem \ref{globalwell}]  Recall that local well-posedness of \eqref{maineq2} in $H^s(\mathbb R)$ for $s\geq 3/4$, which is equivalent to that of \eqref{maineq} via the gauge transformation, was proved in \cite{ChenChenXiao2026}. Let $u_0\in H^s(\mathbb R)$, and let $  u\in C([0,T^*);H^s(\mathbb R))$ be the maximal lifespan solution established by the local well-posedness theory. We shall prove that $T^*=\infty$.

The a priori estimates in Theorem \ref{thm-apriori} are established for Schwartz solutions. We now extend them to solutions in $H^s(\mathbb R)$ by a standard density argument.  Let $u_0^{(n)}\in\mathcal S(\mathbb R)$ satisfy $ u_0^{(n)}\rightarrow u_0 $ in $H^s(\mathbb R)$. Fix a compact interval $J\subset I_{\max}=[0,T^*)$. By local
well-posedness, persistence of regularity, and continuous dependence, the corresponding smooth solutions $u^{(n)}$ are defined on $J$ for all sufficiently large $n$ and satisfy $u^{(n)}\rightarrow u$ in $C(J;H^s(\mathbb R))$. Applying the a priori estimate to $u^{(n)}$ and passing to the limit gives
\begin{align}\label{aprioriz}
\sup_{t\in J}\|u(t)\|_{H^s} \le C_s\bigl(\|u_0\|_{H^s}\bigr),  \qquad J\subset I_{\max}. 
\end{align}
Since $J\subset I_{\max}$ is arbitrary, the estimate \eqref{aprioriz} holds on $I_{\max}$.

Fix $3/4\le s<1$ and let $u_0\in H^s(\mathbb R)$. Suppose that $T^*<\infty$.  \eqref{aprioriz} implies
$$ \sup_{0\le t<T^*}\|u(t)\|_{H^s}  \le C_s\bigl(\|u_0\|_{H^s}\bigr)=:M<\infty.$$
By the uniform local well-posedness theory, every datum $v_0\in H^s(\mathbb R)$ satisfying $\|v_0\|_{H^s}\le M+1$ generates a solution on a time interval of length $\delta>0$. Choose $t_0<T^*$ sufficiently close to $T^*$ so that
$$ T^*-t_0<\frac{\delta}{2}.$$
Since $\|u(t_0)\|_{H^s}\le M$, the local theory applied at time $t_0$ produces a solution on $ [t_0,t_0+\delta)$. By uniqueness, this solution agrees with $u$ on the overlap with $I_{\max}$. Since $ t_0+\delta>T^*$, this contradicts the maximality of $I_{\max}$. Consequently, $T^*=\infty$.

Finally, for $s\ge1$, global well-posedness follows from the result at any fixed regularity $s_0\in[3/4,1)$ together with the persistence of higher regularity. Consequently, \eqref{maineq} is globally well-posed in $H^s(\mathbb R)$ for all $s\ge3/4$.
\end{proof}

{\bf Acknowledgments.}

The work of M. Chen is partially supported by the National Natural Science Foundation of China (NSFC) grant 12001236, the Guangdong Basic and Applied Basic Research Foundation grant 2025A1515011925, and Science and Technology Projects in Guangzhou under grant 2025A04J3568. The work of Z. Wang is partially supported by the NSFC grant 11901092.

\end{document}